\DeclareMathOperator{\Vect}{\textup{Vect}}
\DeclareMathOperator{\mO}{\mathcal{O}}
\DeclareMathOperator{\mE}{\mathcal{E}}
\DeclareSymbolFontAlphabet{\mathbb}{AMSb} 
\DeclareSymbolFontAlphabet{\mathbbl}{bbold} 
\newcommand{\Prism}{{\mathlarger{\mathbbl{\Delta}}}}
\newtheorem{theorem}{Theorem}[section]
\newtheorem{lemma}[theorem]{Lemma}
\newtheorem{proposition}[theorem]{Proposition}
\newtheorem{corollary}[theorem]{Corollary}
\newtheorem*{theorem*}{Theorem}
\newtheorem*{proposition*}{Proposition}
\newtheorem*{corollary*}{Corollary}
\theoremstyle{definition}
\newtheorem{definition}[theorem]{Definition}
\newtheorem{example}[theorem]{Example}
\newtheorem*{claim}{Claim}
\theoremstyle{remark}
\newtheorem{remark}[theorem]{Remark}
\DeclareMathOperator{\Spf}{Spf}
\newcommand{\D}{\mathbb{D}}
\newcommand{\N}{\mathbb{N}}
\newcommand{\Q}{\mathbb{Q}}
\newcommand{\Z}{\mathbb{Z}}
\newcommand{\PP}{\mathbb{P}}
\renewcommand{\P}{\mathbb{P}}
\newcommand{\C}{\mathbb{C}}
\newcommand{\F}{\mathbb{F}}
\newcommand{\V}{\mathbb{V}}
\renewcommand{\L}{\mathbb{L}}
\newcommand{\bL}{\mathbb{L}}
\newcommand{\Fpbar}{\overline{\mathbb{F}}_p}
\newcommand{\ShimK}{\mathrm{Sh}_\mathsf{K}(G,\mathbf{X})}
\newcommand{\integralShimK}{\mathscr{S}_\mathsf{K}(G,\mathbf{X})}
\newcommand{\integralY}{\mathcal{Y}}
\newcommand{\integralS}{\mathscr{S}}
\newcommand{\cO}{\mathcal{O}}
\newcommand{\cF}{\mathcal{F}}
\newcommand{\cH}{\mathcal{H}}
\newcommand{\cS}{\mathcal{S}}
\newcommand{\cP}{\mathcal{P}}
\newcommand{\cZ}{\mathcal{Z}}
\newcommand{\cT}{\mathcal{T}}
\newcommand{\A}{\mathbb{A}}
\DeclareMathOperator{\isom}{\;\xrightarrow{\: {}_{\sim} \:} \;}
\DeclareMathOperator{\Spec}{\mathrm{Spec}}
\DeclareMathOperator{\dR}{dR}
\newcommand{\Spa}{\mathrm{Spa}}
\newcommand{\an}{\mathrm{an}}
\newcommand{\Gm}{\mathbb{G}_m}
\newcommand{\gal}{\mathrm{Gal}}
\newcommand{\Ag}{\mathcal{A}_g}
\newcommand{\AgK}{\mathcal{A}_{g,\mathsf{K}}}
\newcommand{\Ann}{\mathsf{A}}
\newcommand{\scrX}{\mathscr{X}}
\newcommand{\scrD}{\mathscr{D}}
\newcommand{\scrT}{\mathscr{T}}
\newcommand{\Dstar}{\mathsf{D}^{\times}}
\newcommand{\sD}{\mathsf{D}}
\newcommand{\hol}{\mathrm{hol}}
\newcommand{\Ddrlog}{D_\mathrm{dR, log}}
\newcommand{\VetQl}{V_{\text{\'et},\ell}}
\newcommand{\VetZl}{\mathbb{V}_{\text{\'et},\ell}}
\newcommand{\VetZp}{\mathbb{V}_{\text{\'et},p}}
\newcommand{\Vcris}{\mathbb{V}_\mathrm{cris}}
\newcommand{\VFL}{\mathbb{V}_\mathrm{FL}}
\newcommand{\UetZl}{\mathbb{U}_{\text{\'et},\ell}}
\newcommand{\UetZp}{\mathbb{U}_{\text{\'et},p}}
\newcommand{\Ucris}{\mathbb{U}_\mathrm{cris}}
\newcommand{\UFL}{\mathbb{U}_\mathrm{FL}}
\newcommand{\WetZl}{\mathbb{W}_{\text{\'et},\ell}}
\newcommand{\fA}{\mathfrak{A}}
\newcommand{\fD}{\mathfrak{D}}
\newcommand{\fS}{\mathfrak{S}}
\newcommand{\fT}{\mathfrak{T}}
\newcommand{\fX}{\mathfrak{X}}
\newcommand{\LOG}{\mathrm{log}}
\renewcommand{\Im}{\operatorname{Im}}
\DeclareMathOperator{\Hom}{Hom}
\DeclareMathOperator{\Fil}{Fil}
\DeclareMathOperator{\pt}{pt}
\newcommand{\rig}{\mathrm{rig}}
\DeclareMathOperator{\Vdr}{(\mathcal{V},\nabla)_{\textrm{dR}}}
\DeclareMathOperator{\Udr}{(\mathcal{U},\nabla)_{\textrm{dR}}}
\newcommand{\BB}{\mathrm{BB}}
\newcommand{\cris}{\mathrm{cris}}
\newcommand{\CRIS}{\mathrm{CRIS}}
\newcommand{\RH}{\mathcal{R}\mathcal{H}}
\DeclareMathOperator{\spec}{\mathrm{Spec}}
\DeclareMathOperator{\Proj}{Proj}
\DeclareMathOperator{\good}{good}
\def\Spec{\operatorname{Spec}}
\def\sExt{\cE\mathrm{xt}\,}
\def\sHom{\cH\mathrm{om}\,}
\def\gr{\operatorname{gr}}
 \newenvironment{itemize*}
  {\begin{itemize}[topsep=-\parskip+\jot,itemsep=-\parskip-\jot]}
  {\end{itemize}}
\newenvironment{enumerate*}
  {\begin{enumerate}[label=(\alph*),topsep=-\parskip+\jot,itemsep=-\parskip-\jot]}
  {\end{enumerate}}
\newenvironment{enumerate**}
  {\begin{enumerate}[label=(\roman*),topsep=-\parskip+\jot,itemsep=-\parskip-\jot]}
  {\end{enumerate}}
\title{$p$-adic hyperbolicity for Shimura varieties and period images}
\author{Benjamin Bakker, Abhishek Oswal, Ananth N. Shankar and Zijian Yao} 
\date{\today}
\def\cE{\mathcal{E}}
\def\Ext{\operatorname{Ext}}
\def\img{\operatorname{img}}
\def\Spec{\operatorname{Spec}}
\def\sExt{\cE\mathrm{xt}\,}
\def\sHom{\cH\mathrm{om}\,}
\def\fX{\mathfrak{X}}
\def\ff{\mathfrak{f}}
\def\fU{\mathfrak{U}}
\def\Fil{\mathrm{Fil}}
\def\scrS{\mathscr{S}}
\def\Hdg{\mathrm{Hdg}}
\begin{document}
\setlist[description]{font=\normalfont\itshape\textbullet\space}
\maketitle

 \begin{abstract}
We prove that Shimura varieties and geometric period images satisfy a $p$-adic extension property for large enough primes $p$. More precisely, let $\Dstar\subset \sD$ denote the inclusion of the closed punctured unit disc in the closed unit disc. Let $X$ be either a Shimura variety or a geometric period image with torsion-free level structure. Let $F$ be a discretely valued $p$-adic field containing the number field of definition of $X$, where $p$ is a large enough prime.  Then, consider a rigid-analytic map $f: (\Dstar)^a \times \sD^b \rightarrow X_F^{\an}$ defined over $F$. If $X$ is a Shimura variety, we show that $f$ extends to a map $(\Dstar)^a \times \sD^b \rightarrow X^{\BB,\an}$, where $X^{\BB}$ denotes the Baily-Borel compactification of $X$. If $X$ is a geometric period image, we show that $f$ extends to a map $\sD^{a+b}\rightarrow X_F^{\an}$ under a good reduction hypothesis --- namely when the image of $f$ intersects the good reduction locus with respect to the integral canonical model. We note that this hypothesis is vacuous if $X$ is proper. We also deduce an application to algebraicity of rigid-analytic maps. Our methods also apply to the more general situation of the rigid generic fiber of formal schemes admitting Fontaine-Laffaile modules which satisfy certain positivity conditions.
 \end{abstract}

\section{Introduction} 
The purpose of this paper is to prove $p$-adic extension and algebraicity theorems for exceptional Shimura varieties and geometric period images. This result is a $p$-adic analogue of the following theorems for complex Shimura varieties that Borel (\cite{borel}) proved in 1972: 
\begin{theorem*}[Borel extension]
    Let $\ShimK$ be a Shimura variety with torsion-free level structure. Let $D$ be the complex open disc and let $D^{\times}$ be the punctured open unit disc. Then, every holomorphic map ${D^{\times}}^a \times D^b \rightarrow \ShimK^{\hol}$ extends to a map $D^{a+b}\rightarrow (\ShimK^{\BB})^{\hol}$. 
\end{theorem*}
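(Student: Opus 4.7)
The plan is to follow Borel's original strategy, which exploits the Kobayashi hyperbolicity of the Hermitian symmetric domain covering $\ShimK^{\hol}$. Write $\ShimK^{\hol} = \Gamma\backslash \mathbf{X}$, where $\mathbf{X}$ is a bounded symmetric domain and $\Gamma \subset G(\Q)$ is a torsion-free arithmetic lattice. The Baily--Borel compactification $(\ShimK^{\BB})^{\hol}$ is a normal projective variety whose boundary is stratified by rational boundary components $F \subset \overline{\mathbf{X}}$ indexed by conjugacy classes of maximal $\Q$-parabolic subgroups of $G$.

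First I would reduce the problem to continuous extension. Suppose we have established a continuous extension $\bar f : D^{a+b} \to (\ShimK^{\BB})^{\hol}$ of $f$. Since the puncture locus $D^{a+b} \setminus ((D^{\times})^a \times D^b)$ is a union of smooth hypersurfaces, Riemann's removable singularity theorem applied to local sections of an ample line bundle on the Baily--Borel compactification (pulled back along $\bar f$, where they are bounded by continuity) upgrades the continuous extension to a holomorphic one. Thus the entire content of the theorem is to show that for every boundary point $p \in D^{a+b} \setminus ((D^{\times})^a \times D^b)$ and every sequence $x_n \to p$, the images $f(x_n)$ converge in $(\ShimK^{\BB})^{\hol}$, and the limit depends only on $p$.

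The core step, which I expect to be the main obstacle, is controlling the asymptotic behavior of $f$ near $p$. Lifting $f$ to the universal cover, the fundamental group $\Z^a$ of $(D^{\times})^a \times D^b$ determines monodromy elements $\gamma_1,\dots,\gamma_a \in \Gamma$ around the vanishing coordinates. Because $f$ is distance-decreasing for the Kobayashi metric, and the Poincaré distance between $x$ and $\gamma_i \cdot x$ grows only logarithmically as one approaches the puncture while the translation length of a non-quasi-unipotent element of $\Gamma$ on $\mathbf{X}$ is bounded below, each $\gamma_i$ must be quasi-unipotent; since $\Gamma$ is torsion-free, the $\gamma_i$ are in fact unipotent. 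Each unipotent $\gamma_i$ then fixes a unique minimal rational boundary component $F \subset \overline{\mathbf{X}}$ of the Baily--Borel compactification, common to all of them, determined by the common parabolic centralizing the abelian unipotent subgroup $\langle \gamma_1,\dots,\gamma_a\rangle$.

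To finish, I would invoke Borel's quantitative estimate on Siegel set neighborhoods. The distance-decreasing property of $f$, together with the explicit description of horocyclic/Siegel neighborhoods of $F$ inside $\mathbf{X}$, forces the lifts of $f(x_n)$ into an arbitrarily small Siegel neighborhood of $F$ as $x_n \to p$; modulo $\Gamma$ this image converges to a single point of the stratum of $(\ShimK^{\BB})^{\hol}$ associated to $F$, independent of the lift and of the approach sequence. This yields the required continuous extension, and combining with the first reduction completes the proof. The delicate ingredient throughout is the quantitative comparison between the Poincaré metric near a puncture and the Kobayashi metric on a Siegel neighborhood of a rational boundary component — this uses the structure of $\mathbf{X}$ as a bounded symmetric domain in an essential way and is where most of Borel's original argument is spent.
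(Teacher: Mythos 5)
This statement is not proved in the paper at all: it is Borel's 1972 theorem, quoted purely as background and used via the citation \cite{borel}. So there is no in-paper argument to compare against; what you have written is, in outline, a sketch of Borel's original proof (hyperbolicity of the bounded domain, quasi-unipotence of the monodromy, Siegel-set/Satake-topology estimates near a rational boundary component, then continuous-to-holomorphic upgrading), and that is indeed the right strategy for this classical result. Your reduction from a continuous extension to a holomorphic one, by composing with a projective embedding of $(\ShimK^{\mathrm{BB}})^{\hol}$ and applying Riemann extension to the bounded coordinate functions, is fine.

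Two substantive points. First, your monodromy step misstates the key estimate: for a lift $x$ of a point of $D^{\times}$ at radius $r$, the invariant distance between $x$ and $\gamma_i\cdot x$ is bounded by the Poincar\'e length of the loop at radius $r$, which is of size $2\pi/\log(1/r)$ and hence \emph{tends to zero}; it does not ``grow logarithmically''. The argument needs exactly this decay: once the displacement falls below the positive translation length of a fixed non-quasi-unipotent element, $\gamma_i$ is forced to be quasi-unipotent, and torsion-freeness (together with discreteness, which kills elliptic elements) upgrades this to unipotent. As literally written, a logarithmically growing displacement would yield no contradiction. Second, the heart of the theorem is asserted rather than proved: that the commuting unipotents $\gamma_1,\dots,\gamma_a$ lie in a common rational parabolic singling out a common rational boundary component (``centralizing'' is not the right relation, and this containment itself requires an argument), and above all that the distance-decreasing property pushes the image of a small punctured neighborhood into horocyclic/Siegel neighborhoods in such a way that $f(x_n)$ converges to a \emph{single} point of the Baily--Borel boundary, uniformly in the approach. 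This Satake-topology convergence is where essentially all of Borel's work lies, and your proposal invokes it as ``Borel's quantitative estimate'' rather than establishing it; as a standalone proof that is a genuine gap, though it is the same content the paper itself outsources to \cite{borel}.
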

An immediate corollary of this extension result and GAGA is the following algebraicity theorem. 

\begin{theorem*}[Borel algebraicity]
    Let $\ShimK$ be as above, and let $M$ be a complex algebraic variety. Then every holomorphic map $M^\hol \rightarrow \ShimK^{\hol}$ is the analytification of an algebraic map $M\rightarrow \ShimK$.
\end{theorem*}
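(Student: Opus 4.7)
The plan is to deduce algebraicity from Borel extension by a standard GAGA argument. First I would reduce to the case that $M$ is smooth and admits a smooth projective compactification: via Hironaka's resolution of singularities one may replace $M$ by a smooth $\tilde M$ (descending the resulting algebraic map along the proper birational $\tilde M \to M$ by rigidity, since fibres are proper and the target is separated), and then embed $M$ as a Zariski open in a smooth projective variety $\bar M$ with complement $E = \bar M \setminus M$ a simple normal crossings divisor. Compose the given map with the open immersion $\ShimK \hookrightarrow \ShimK^{\BB}$ into the Baily--Borel compactification to obtain a holomorphic $f \colon M^\hol \to (\ShimK^{\BB})^\hol$.

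The central step is to extend $f$ holomorphically across $E$. For any boundary point $x \in E$, choose analytic coordinates $z_1, \dots, z_{a+b}$ on a polydisc neighbourhood $U \subset \bar M^\hol$ in which $E \cap U = \{z_1 \cdots z_a = 0\}$. Then $f|_{U \cap M}$ is a holomorphic map $(D^\times)^a \times D^b \to (\ShimK^{\BB})^\hol$, so Borel extension supplies a continuation to a holomorphic map $U \to (\ShimK^{\BB})^\hol$. By density of $M^\hol$ in $U$ and the Hausdorff property of the target, these local extensions are uniquely determined and agree on overlaps, hence glue to a global holomorphic map $\bar f \colon \bar M^\hol \to (\ShimK^{\BB})^\hol$.

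Finally, since $\bar M$ is projective and $\ShimK^{\BB}$ is a projective algebraic variety, Serre's GAGA implies that $\bar f$ is the analytification of an algebraic morphism $\bar M \to \ShimK^{\BB}$. By construction the image of $M$ under $\bar f$ lies set-theoretically in the open subvariety $\ShimK \subset \ShimK^{\BB}$, so restricting and applying GAGA on $M$ yields the desired algebraic map $M \to \ShimK$ whose analytification recovers the original holomorphic map. The main content of the argument is the local extension across the boundary divisor, which is exactly what Borel extension provides; the remaining ingredients -- resolution of singularities, rigidity-type descent through the resolution, and GAGA on the projective compactification -- are all classical.
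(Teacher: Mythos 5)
Your proposal is correct and is exactly the argument the paper intends when it calls the statement an immediate corollary of Borel extension and GAGA: compactify $M$ with simple normal crossings boundary, extend into $(\ShimK^{\BB})^{\hol}$ across the boundary using the local polydisc form of Borel extension, apply GAGA on the projective compactification, and restrict back to $\ShimK$. The only step worth phrasing more carefully is the reduction to smooth $M$: rather than appealing to rigidity (which needs connectedness/normality hypotheses that a resolution of a non-normal $M$ need not satisfy), note that the graph of the original map is the image of the algebraic graph on $\tilde M$ under the proper morphism $\tilde M \times \ShimK \to M \times \ShimK$, hence Zariski closed, and a morphism of complex varieties whose analytification is an isomorphism is an isomorphism, which yields algebraicity of the map on $M$ itself.
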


Here is the main theorem of this paper. 

\begin{theorem}\label{thm:introextension}
    Let $X$ be either a Shimura variety or a geometric period image with torsion-free level structure. There exists an integer $N$ with the following property. Let $p$ be a prime that doesn't divide $N$ and suppose $F$ is a discretely valued $p$-adic field containing the field of definition of $X$. Suppose that $f: (\Dstar)^a \times \sD^b \rightarrow X_F^{\an}$ is a rigid-analytic map defined over $F$. 
    \begin{enumerate}
        \item Suppose $X$ is a Shimura variety. Then $f$ extends to a map $\sD^{a+b}\rightarrow X^{\BB,\an}$.

        \item Suppose $X$ is a geometric period image. Further suppose that $\Im(f)$ intersects the good reduction locus of $X_F$. Then $f$ extends to a map $\sD^{a+b} \rightarrow X_F^\an$.
    \end{enumerate}
    
\end{theorem}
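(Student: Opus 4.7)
The plan is to combine Fontaine-Laffaille theory on the integral canonical model with rigid-analytic extension principles. For a Shimura variety of Hodge type with torsion-free level, the integral canonical model carries a canonical Fontaine-Laffaille module $\VFL$ coming from $p$-adic cohomology of the universal abelian scheme, valid whenever $p$ exceeds a bound $N$ depending only on the Shimura datum. The Hodge filtration on $\VFL$ provides a rigid-analytic period map $\varphi$ from the good-reduction locus of $X_F^{\an}$ to a projective rigid-analytic variety $Y_F$ (a partial flag variety, or compact dual). For exceptional Shimura varieties and geometric period images, the same setup applies after passing to companion or tensor-constructed FL modules, falling under the general formal-scheme formulation that the abstract refers to.

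The first key step is to extend the composition $\varphi \circ f \colon (\Dstar)^a \times \sD^b \to Y_F^{\an}$. Since the target is projective, a rigid-analytic Hartogs-type extension across the punctured-disc factors $(\Dstar)^a$ produces a rigid-analytic map $\widetilde{g} \colon \sD^{a+b} \to Y_F^{\an}$ extending $\varphi \circ f$.

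Next, I would pull back $\VFL$ via $f$ to obtain a Fontaine-Laffaille module on the formal model of $(\Dstar)^a \times \sD^b$; the good-reduction hypothesis together with connectedness of the source guarantees that the pullback is everywhere defined on the good-reduction locus. A purity/extension theorem for Fontaine-Laffaille modules across formal boundary divisors then extends this FL module to one on $\sD^{a+b}$, compatibly with $\widetilde{g}$. Positivity of the Hodge bundle (ampleness of automorphic line bundles on toroidal compactifications) is the crucial ingredient that rules out pathological monodromy near the punctures and makes this extension possible.

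The final step is to lift the extended period/FL data back to $X_F^{\an}$. Since the period map $\varphi$ is quasi-finite on the good-reduction locus, essentially by a $p$-adic Torelli-type statement, the extension $\widetilde{g}$ lifts uniquely to a rigid-analytic map $\sD^{a+b} \to X_F^{\an}$ agreeing with $f$ on $(\Dstar)^a \times \sD^b$. The main obstacle lies precisely in this lifting: one must show that the lift remains inside $X_F^{\an}$ rather than escaping to a compactification, and that the required quasi-finiteness of $\varphi$ can be controlled uniformly in the Shimura datum. This is where the hypothesis $p \nmid N$ is most essential, since both the positivity of $\VFL$ and the $p$-adic Torelli rigidity are well-behaved only when $p$ exceeds the bound $N$.
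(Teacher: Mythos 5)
Your proposal breaks down at its first key step: the assertion that, because the target $Y_F$ (a flag variety/compact dual) is projective, the composite $\varphi\circ f$ extends across the punctures by a ``rigid-analytic Hartogs-type extension.'' Hartogs-type extension in rigid geometry applies across closed subspaces of codimension at least $2$, not across the divisors $\{z_i=0\}$, and properness of the target gives no extension across a puncture: already a map $\Dstar\to\PP^{1,\an}$ defined by a function with an essential singularity at $0$ fails to extend (even meromorphically) to $\sD$. The compact dual is a rational homogeneous variety with no hyperbolicity or positivity, so nothing forces extension of maps into it; indeed the entire difficulty of the theorem is that extension across punctures is a big-Picard-type statement requiring positivity of the target, which is exactly what the flag variety lacks. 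A second, independent problem is the construction of $\varphi$ itself: on the good reduction locus of $X_F^{\an}$ there is no globally defined rigid-analytic period map to a fixed projective flag variety over $F$ --- the Hodge filtration lives in a non-constant filtered flat bundle, and trivializing it to obtain such a map is only possible locally (on residue disks) or via crystalline/Rapoport--Zink-type uniformizations, which are precisely what is unavailable for exceptional Shimura varieties and geometric period images and what the paper deliberately sidesteps. The closing step, lifting $\widetilde g$ back along a ``quasi-finite'' $\varphi$ by a $p$-adic Torelli statement, is likewise unsubstantiated in this generality. Finally, you never address that the hypothesis only says $\Im(f)$ \emph{meets} the good reduction locus; one must first prove that the image of each punctured disk is entirely contained in it, which requires a separate ($\ell$-adic monodromy) argument.

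For comparison, the paper's route is quite different: it first proves the good/bad reduction dichotomy for maps from $\Dstar$; then uses the extension theorem of \cite{DY} to extend the crystalline local system over the puncture, the theory of prismatic $F$-crystals (after suitable admissible blow-ups) to show the pointwise $F$-crystal $\mathbb{D}_{\mathrm{crys}}(\bL_x)$ is constant on a smaller disk, an Oort-style argument to upgrade pointwise constancy to constancy of the $F$-crystal on the mod-$p$ curves arising from integral models of annuli, and then the immersivity of Kodaira--Spencer (Shimura case) or ampleness of the Griffiths bundle (period image case) to force the mod-$p$ map to be constant, so that $f(\Dstar)$ lands in a residue disk and extends by Riemann extension. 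The polydisk case is then deduced via meromorphic extension (triviality of line bundles on polypunctured disks, a Baire-category argument, and non-archimedean Hartogs in codimension $2$) together with the logarithmic $p$-adic Riemann--Hilbert correspondence of \cite{DLLZ} and the ample descended Griffiths bundle on the Baily--Borel compactification to contract the exceptional fibers. None of these mechanisms appear in your outline, and the steps you substitute for them do not hold as stated.
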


Theorem \ref{thm:introextension} has the following corollary (using \cite[\S2.2]{p-adic-borel-Ag}). 
\begin{theorem}\label{thm:introalgebraicity}
    Let $X,p$ and $F$ be as above, and let $M$ be an algebraic variety defined over $F$. Let $f: M^{\an} \rightarrow X^\an$ defined over $F$ be a rigid analytic map. 
    \begin{enumerate}
        \item If $X$ is a Shimura variety, then $f$ is the analytification of an algebraic map. 

        \item If $X$ is a geometric period image, further assume that $\Im(f)$ is contained in the good reduction locus, then $f$ is the analytification of an algebraic map.
        
    \end{enumerate}
    
\end{theorem}

\begin{remark}
    \begin{enumerate}
        \item We define the good reduction locus precisely in Section \ref{Sec: bdry}. Informally, the good reduction locus is the analytic open subspace of $X^{\an}$ whose classical $K$-points arise as $\cO_K$-points of a good integral model of $X$. If $X$ is proper, then the good reduction locus is all of $X^{\an}$ and therefore the good reduction hypothesis is vacuous. We expect the theorem to hold without this hypothesis. 

        \item We draw the reader's attention to the fact that the good reduction hypothesis in the period image case of Theorem \ref{thm:introextension} has the consequence that the extension of $f$ yields a map to $X$, and it is not necessary to compactify $X$. 

    \end{enumerate}
\end{remark}

\subsection{Other results}
The proofs of Theorem \ref{thm:introextension} and \ref{thm:introalgebraicity} work in a more general setting than just the case of geometric period images and Shimura varieties. In order to not mire ourselves in unenlightening notation and technicalities, we will state a result that is not the most general but that is the cleanest to state. 

\begin{theorem}\label{thm: Brunebarbe}
    Let $\mathscr{X}$ be a smooth scheme over $W(\Fpbar)$, and let $\mathscr{X}^{\rig}$ denote its rigid generic fiber. Let $\bL/\mathscr{X}^{\rig}$ be a crystalline local system with $\VFL$ the associated Fontaine-Laffaile module. Suppose that we are in one of the following two cases. 
    \begin{enumerate}
        \item The Kodaira-Spencer map associated to the filtered flat bundle underlying $\VFL$ is everywhere immersive. 
        \item The Griffiths bundle associated to $\VFL$ is an ample bundle on $\mathscr{X}$. 
    \end{enumerate}
    Then, every map $(\Dstar)^{a} \times \sD^b \rightarrow \mathscr{X}^{\rig}$ extends to a map $\sD^{a+b}\rightarrow \mathscr{X}^{\rig}$ 
\end{theorem}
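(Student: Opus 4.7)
The plan is to reduce to one dimension and then use the positivity hypotheses to control the boundary behaviour of $f$ via $p$-adic Hodge-theoretic data. By a Hartogs-style argument for rigid analytic maps (restricting to transverse slices across the boundary divisor and checking the extension one variable at a time), it suffices to treat the case $a=1$, $b=0$: any rigid-analytic map $f\colon \Dstar \to \mathscr{X}^{\rig}$ must be shown to extend to $\sD \to \mathscr{X}^{\rig}$.

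The core of the argument is to extend the pulled-back Fontaine--Laffaille data across the puncture. Pulling back along $f$ gives a crystalline local system $f^{*}\bL$ on $\Dstar$ with associated FL-module $f^{*}\VFL$. After passing to a finite cover $t\mapsto t^{n}$ of $\Dstar$ one may assume the local monodromy is unipotent, and then a $p$-adic analogue of Deligne's canonical extension / Schmid's nilpotent orbit theorem should produce a canonical extension of $f^{*}\VFL$ to a log Fontaine--Laffaille module on $\sD$. This gives a candidate FL-module at the origin $0\in\sD$, together with its Hodge filtration, Frobenius, and connection.

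To convert this FL-theoretic extension into an honest extension of $f$, the two positivity hypotheses are used in rather different ways. In case (1), the immersive Kodaira--Spencer condition implies that $\mathscr{X}^{\rig}$ is locally embedded (via the $p$-adic period map) in the appropriate flag variety parametrising Hodge filtrations; the extended FL-module at the origin singles out a point of this flag variety, and the immersiveness together with a rigidity statement for crystalline period maps produces a unique point of $\mathscr{X}^{\rig}$ through which $f$ extends. In case (2), ampleness of the Griffiths bundle $\cL$ provides a Schwarz-type bound: a high power $\cL^{\otimes N}$ is very ample, and a global section of $\cL^{\otimes N}$ pulls back under $f$ to a rigid function on $\Dstar$ whose boundedness follows from the extension of the Griffiths line bundle constructed in the previous paragraph. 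Together these bound $f$ inside an affine chart of a projective embedding of $\mathscr{X}^{\rig}$, and Bartenwerfer's rigid-analytic Riemann extension theorem then gives the extension.

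The main obstacle I anticipate is the construction of the canonical extension of $f^{*}\VFL$ across the puncture as a log Fontaine--Laffaille module, and verifying that the limit object corresponds to a point of $\mathscr{X}^{\rig}$ itself rather than of some compactification. In the complex setting this is precisely the content of Schmid's nilpotent orbit and $\mathrm{SL}_{2}$-orbit theorems, and a $p$-adic analogue via crystalline Dieudonn\'e theory over the open unit disc, while plausible, is technically delicate and requires careful handling of divided power structures and Frobenius descent. A secondary but essential point is translating the two positivity hypotheses, which are stated in terms of generic-fibre Hodge bundles, through the Fontaine--Laffaille functor into the concrete geometric boundedness statement needed to apply Bartenwerfer's theorem.
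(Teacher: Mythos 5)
The central step of your argument---extending $f^{*}\VFL$ across the puncture via a ``$p$-adic analogue of Deligne's canonical extension / Schmid's nilpotent orbit theorem'' for Fontaine--Laffaille modules---is not an available result, and you flag it yourself as the main obstacle without offering a route around it, so the proof does not get off the ground at its key point. The paper avoids any such limit/nilpotent-orbit statement: it invokes the theorem of \cite{DY} that a $\Z_p$-local system on $\Dstar$ with crystalline fibers extends to a crystalline local system on $\sD$, then uses the prismatic $F$-crystal correspondence (together with a reflexive-hull and blow-up argument, after shrinking the disc) to produce an honest $F$-crystal whose fibers compute $\mathbb{D}_{\cris}(\bL_x)$ at every classical point, an Oort-style rigidity statement to upgrade pointwise constancy to constancy of the $F$-crystal on the special-fiber components of integral models of annuli, and finally the Kodaira--Spencer immersivity (resp.\ Griffiths-bundle ampleness) to force those mod $p$ maps to be constant---so the image of $\Dstar$ lies in a single residue disc and extends by the Riemann extension theorem, landing in $\mathscr{X}^{\rig}$ itself with no compactification ever entering the argument. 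Even granting your canonical extension, the mechanism you propose for producing the limit point is unsubstantiated: in case (1) there is no rigid-analytic period map embedding $\mathscr{X}^{\rig}$ into a flag variety in this generality, nor a ``rigidity statement for crystalline period maps'' to quote (the absence of such uniformization is precisely what the paper is designed to circumvent); in case (2), boundedness of the pullback of a section of $\cL^{\otimes N}$ does not follow from merely extending the Griffiths line bundle across the puncture, and since $\mathscr{X}$ comes with no compactification on which that bundle extends amply, the crux---that the limit lies in $\mathscr{X}^{\rig}$ rather than escaping it---is exactly the point you leave unresolved.

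The reduction to $a=1$, $b=0$ is also not justified as stated: a map that extends on every one-dimensional transverse slice need not extend analytically in all variables, and the paper's own deduction of the polydisc case from the one-dimensional case requires genuine work---triviality of line bundles on punctured polydiscs, a Baire-category argument yielding only a meromorphic extension away from a codimension-two locus, resolution of indeterminacies, and then the $p$-adic Riemann--Hilbert correspondence of \cite{DLLZ} combined with ampleness of the Griffiths bundle and the rigidity lemma to contract the exceptional fibers. A Hartogs-type statement for maps (as opposed to functions) is not available, so your slicing step needs essentially all of this machinery to be carried out.
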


\subsection{Outline of proof}

The main results of \cite{p-adic-borel-Ag} proved the $p$-adic extension and algebraization results for Shimura varieties \emph{of abelian type}  parallelling Borel's theorem in the setting of a discretely valued $p$-adic field. The strategy of \cite{p-adic-borel-Ag} crucially uses the existence of Rapoport-Zink (\cite{RZ}) spaces and Rapoport-Zink uniformizations of $\Ag$. This in turn of course relies on the moduli interpretation of $\Ag$. While there is a theory of Rapoport-Zink spaces (see \cite{RV}) that goes beyond the setting of abelian varieties, it is not known (though it is certainly expected) that exceptional Shimura varieties admit such uniformization maps. The setting of geometric period images is even more barren, without even any expectations of $p$-adic uniformization maps. Our proof therefore sidesteps the existence Rapoport-Zink uniformizations and instead make strong use of the existence of crystalline local systems and Fontaine-Laffaile modules. 

The outline of our proof is as follows.  The main step is the case of a one-dimensional disk. For brevity, we will focus on the Shimura case. We work at a prime $p$ at which $X$ has an integral canonical model (which we will denote by $\mathscr{X}$ in the introduction). We have a log-smooth compactification $\bar{\scrX}$ of $\scrX$, and $\mathscr{X}$ is equipped with $\ell$-adic local systems and a log Fontaine-Laffaille module, and $X$ is equipped with a crystalline $p$-adic local system associated to the log Fontaine-Laffaille module. We first prove that any map $f: \Dstar \rightarrow X_F^{\an}$ has the property that $f(\Dstar)$ is either entirely contained in the good reduction locus, or the bad reduction locus. This proof is $\ell$-adic and follows the arguments in \cite{p-adic-borel-Ag}, and uses a monodromy-theoretic description of the good-reduction locus proved in \cite{PST} for exceptional Shimura varieties. In fact, smilar to \cite{p-adic-borel-Ag}, we prove that the every point in the image of $\Dstar$ has the same reduction type, i.e. all the points specialize to a single open boundary stratum of $\scrX^{\BB}$. Let us first focus on the good-reduction case. We prove that the $p$-adic local system extends to $\sD$ (recent work of \cite{DY} proves a more general extension result, but in our case we are able to prove this by using the $p$-adic Riemann-Hilbert correspondence as proved by \cite{DLLZ}). We then apply the theory of prismatic $F$-crystals to show that up to shrinking $\sD$, the $F$-crystal associated to the crystalline Galois representation $\bL_x$ is independent of the classical point $x\in \sD$. Now, we write $\Dstar$ as an increasing union of annuli $\Ann_k$, each of which admits an integral model $\mathfrak{A}_k$ that maps to $\mathscr{X}$. We then generalize an argument of Oort (\cite{oortleaves}) to show that the $F$-crystal on $\mathscr{X}\bmod p$ pulls back to $\mathfrak{A}_k \bmod p$. Finally, we use the Kodaira-Spencer map to prove that any map from a connected variety over $\Fpbar$ to $\mathscr{X}\bmod p$ with the property that the $F$-crystal over $\mathscr{X}\bmod p$ pulls back to something constant must in fact be the constant map. We then conclude that the map from $\Dstar$ to $X_F^\an$ maps to a residue disc, and therefore extends by the Riemann extension theorem. 

In the setting of bad-reduction, we work with a fixed Baily-Borel boundary stratum of $\scrT \subset \bar{\mathscr{X}}$ where $\scrT$ is  the fiber of the map $\bar{\mathscr{X}} \rightarrow \scrX^{\BB}$ over an open boundary stratum. Note that such an open boundary stratum of $\mathscr{X}^{\BB}$ is a smaller-dimensional Shimura variety in its own right. We prove that the log $F$-crystal (underlying the log FL module) restricted to $\scrT_{\overline{\F}_p}$ admits a weight-filtration by sub log $F$-crystals, and that the associated graded is an $F$-crystal pulled-back from the open boundary stratum of $\scrX^{\BB}$. We then use this weight filtration together with work from \cite{DLMS2} to construct a filtration of the $p$-adic local system restricted to $\Dstar$. This allows us to reduce to the good reduction case, and we thus finish the case of maps from $\Dstar$. 

To deduce \Cref{thm:introextension} for morphisms from polydisks $f: (\Dstar)^a \times \sD^b \rightarrow X_F^{\an}$, we show that the existence of an extension on any one-dimensional disk implies that $f$ extends meromorphically and then use the $p$-adic Riemann--Hilbert correspondence of \cite{DLLZ} to show that the exceptional fibers in the resolution of indeterminacies of $f$ must be contracted.  

\subsection{Previous work}
There are several results prior (aside from Borel's work) to our work that addresses the questions of algebraicity and extension -- both in the complex and $p$-adic settings. In the complex case, \cite{bbt-o-minimal-gaga} and \cite{bfmt-bailyborel} prove the algebraicity and extension results  for geometric period images.

As earlier mentioned, \cite{p-adic-borel-Ag} treats the case of abelian Shimura varieties for all primes $p$, without a good-reduction hypothesis. It also treats the case of the universal abelian scheme over compact Shimura varieties of Hodge type, and Rapoport-Zink spaces associated to $\AgK$. The paper \cite{OswalPappas} proves the $p$-adic extension theorem for local Shimura varieties. Cherry (in \cite{Cherry}) addresses the case of genus $\geq 2$ curves in the more general situation of $\C_p$. Cherry-Ru (\cite{cherry-ru}) prove a $p$-adic big Picard style theorem, and Sun (\cite{sun-hyperbolicity-of-mg}) proves the $\C_p$-analogue of the algebraicity theorem.

\subsection{Organization of the paper}
In Section \ref{sec: setup}, we introduce various objects that live on Shimura varieties and period images. In Section \ref{Sec: bdry}, we prove that the image of every map $\Dstar \rightarrow X$ must either be entirely contained in the good reduction locus or the bad reduction locus. In Section \ref{sec: prismatic setup}, we recall results about prismatic $F$-crystals, and in Section \ref{Sec:pointwiseconstancy} prove a crucial constancy result for $F$-crystals. In Section \ref{sec: constancy}, we generalize work of Oort to show that a pointwise constant $F$-crystal on $\mathbb{P}^1$ must be constant. In Section \ref{sec: Dstar}, we prove the main theorem for $\Dstar$ in the good reduction case. In Section \ref{section:bad_reduction_case}, we recall notions about log structures and deal with the bad reduction case for Shimura varieties. Finally, we prove the main theorem in general in Section \ref{sec: higher borel}.
\subsection{Acknowledgements}
We are very grateful to H\'{e}l\`{e}ne Esnault,  Haoyang Guo, Kentaro Inoue, Ruofan Jiang, Teruhisa Koshikawa, Anand Patel, Koji Shimizu, Jacob Tsimerman, Alex Youcis, Ziquan Yang, and Xinwen Zhu for several helpful conversations. We thank Alex Youcis for pointing us to the reference \cite{IKY} and Ruofan Jiang for pointing out to us the reference \cite[Proposition 3.46 (c)]{DLMS2}. We are especially grateful to Xinwen Zhu for suggesting the problem of a $p$-adic Borel extension to the second author in the first place, and to H\'{e}l\`{e}ne Esnault for suggesting to the third author that a similar extension theorem might hold in the presence of a suitable local system. B. B. was partially supported by NSF grant DMS-2401383, the Institute for Advanced Study, and the Charles
Simonyi Endowment. A. S. was partially supported by the NSF grants DMS-2338942 and DMS-2424441, and a Sloan research fellowship. B.B. and A.S. thank the IAS for their hospitality during which part of this paper was written. 
 
\section{Notations and the general setup}\label{sec: setup}

\subsection{The set-up for the Shimura case}
We follow closely the notations of \cite{bst-integral-canonical}.
To recall, $(G,\mathbf{X})$ shall denote a Shimura datum. Let $E := E(G,\mathbf{X}) \subset \C$ denote the reflex field.
We let $V$ be the adjoint representation of $G$, a $\Z$-lattice $\mathbb{V}$ of $V$, and a neat compact open subgroup $\mathsf{K} \subset G(\A_f)$ that stabilizes $\mathbb{V}\otimes \hat{\Z} \subset V\otimes_\Q \A_f.$ We further assume that $\mathsf{K}$ acts trivially on $\mathbb{V}/3\mathbb{V}.$
Denote by $\ShimK$, the corresponding Shimura variety over $E$.
Associated to $\mathbb{V} \subset V$, we have a family of $\Z_\ell$ (respectively $\Q_\ell$) \'etale local systems on $\ShimK$ that we denote by  $\VetZl$ (resp. $\VetQl$.)   
We denote by $V_\mathrm{dR} := (\mathcal{V},\nabla,F^\bullet \mathcal{V})$ the associated filtered flat vector bundle on $\ShimK$ defined over $E$.  

We pick a large integer $N$ as in \cite[Theorem 1.3]{bst-integral-canonical}, so that $\ShimK$ admits a smooth model $\integralShimK$ over $\cO_E[1/N]$ and such that for all places $v$ of $E$ outside $N$, $\integralShimK \otimes_{\cO_E[1/N]} \cO_{E_v}$ is a \emph{canonical} integral model of $\ShimK \otimes_E E_v$ over the ring of integers $\cO_{E_v}$ of the $v$-adic completion $E_v$ of $E$. Furthermore, for every place $v \nmid N$, $\integralShimK \otimes_{\cO_E[1/N]} \cO_{E_v}$ admits a log-smooth compactification over $\cO_{E_v}$. The $\Z_\ell$-\'etale local systems $\VetZl$ on $\ShimK \otimes_E E_v$, extend to $\ell$-adic \'etale local systems on $\integralShimK \otimes_{\cO_E[1/N]} \cO_{E_v}[1/\ell]$. We also have that for $p \nmid N$, the restriction of $\VetZp$ to $\ShimK \otimes_E E_v$ is crystalline in the sense of Faltings-Fontaine-Laffaille, where $v$ is a place of $E$ dividing $p$. By increasing $N$ if necessary, we may also assume that $\Vdr$ spreads out to $\integralShimK$ such that the Kodaira-Spencer map is everywhere immersive. 

We shall fix henceforth a rational prime $p \nmid N$, a place $v$ of $E$ above $p$. 
We shall work throughout over the $p$-adic local field $K := E_v$.
Set $\mathscr{S} := \integralShimK \otimes_{\cO_E[1/N]} \cO_{E_v}$, and $S := \ShimK \otimes_E E_v.$
By a slight abuse of notation, we denote the pullbacks to $S$ of the local systems $\VetQl$, $\VetZl$ and the filtered flat vector bundle $V_\mathrm{dR}$ also by $\VetQl$, $\VetZl$ and $V_\mathrm{dR}$ respectively, and in the case $\ell \neq p$, their extensions to the integral canonical model $\mathscr{S}$ shall also be denoted by the same. We let $\VFL$ denote the Fontaine-Laffaile module associated to $\VetZp/S$ on the formal $p$-adic completion $\hat \integralS$, and we let $\V_{\cris}$ denote the $F$-crystal on $\integralS_p := \integralS \otimes_{\cO_{E_v}} k_v$, where $k_v$ is the residue field of $\cO_{E_v}$. Note that the filtered flat bundle underlying $\VFL$ is just $\Vdr$. We will sometimes use the symbol $\bL$ to denote the $\Z_p$-\'etale local system $\VetZp$ on $S$.

We denote by $\ShimK^\BB$ the Baily--Borel compactification of $\ShimK$ and set $S^\BB := \ShimK^\BB \otimes_E E_v$.  $\ShimK^\BB$ is naturally stratified by Shimura varieties, and we may assume this spreads out to a stratification of an integral model $\scrS^\BB$ of $S^\BB$ over $\cO_{E_v}$ by integral models of those Shimura varieties in the above sense.  We may further assume there is:
\begin{enumerate}
    \item A log smooth compactification\footnote{Specifically meaning $(\bar\scrS,\scrD)$ is \'etale-locally isomorphic to $(\A^m_{\cO_{E_v}},\mathscr{C})$ where 
    $\mathscr{C}$ is a union of coordinate hyperplanes.} $\bar\scrS$ of $\scrS$ over $\cO_{E_v}$ with boundary $\scrD:=\bar\scrS\backslash \scrS$ equipped with a morphism $\bar\scrS\to\scrS^\BB$ extending the identity on $\scrS$.  We refer to the inverse images of strata under this morphism as Baily--Borel strata of $\bar\scrS$, and typically denote them by $\scrT$.  We may after modification of $\bar\scrS$ (by blowing up components of intersections of divisors) assume the Baily--Borel strata of $\bar\scrS$ are unions of boundary divisors.
    
    \item $\VFL$ extends to a log Fontaine--Laffille module $\bar \V_\mathrm{FL}$ on $\bar \scrS$.  We denote its underlying filtered logarithmic flat vector bundle by $\bar \V_\mathrm{dR}$, and log $F$-crystal by $\bar \V_{\cris}$.  Equipping $\bar \scrS$ with its natural log structure $(\bar \scrS,M_{\bar \scrS})$ of functions which are invertible off $\scrD$, we may interpret $\bar \V_{\mathrm{dR}}$ (resp. $\bar\V_{\cris}$) as a filtered flat vector bundle (resp. $F$-crystal) on $(\bar \scrS,M_{\bar \scrS})$.  
    
    \item For each connected component $\scrD_0$ of an intersection of irreducible components of $\scrD$, $\scrD_0$ is naturally equipped with a log structure $(\scrD_0,M_{\scrD_0})$ making it a strict sub-log scheme.  The restriction of the flat vector bundle $\bar \V_\mathrm{dR}$ to $\scrD_0$ naturally acquires a monodromy-weight filtration $W_\bullet \bar \V_\mathrm{dR}|_{\scrD_0}$ by sub-flat vector bundles coming from the residues of the connection.
    
    \item For each Baily--Borel stratum $\scrT$, these monodromy weight filtrations glue to give a filtration $W_\bullet \bar \V_\mathrm{dR}|_\scrT$ by sub-flat vector bundles. 
\end{enumerate}

\subsection{The set-up for geometric period images}

Let $E \subset \C$ be a number field, $P$ a smooth, connected quasi-projective algebraic variety over $E$, and $f : Z \rightarrow P$ a smooth, projective $E$-morphism.
For a fixed $m$, we have a polarizable integral variation of Hodge structures $(\mathbb{W}_\Z,F^\bullet)$ with underlying $\Z$-local system $\mathbb{W}_\Z := R^mf^\hol_\ast(\underline{\Z}_{Z^\hol})$. 
Denote by $G$, the generic Mumford-Tate group of the variation.
We shall further assume that the variation has \emph{neat monodromy}, which can always be arranged after passing to a finite \'etale cover of $P$.
We denote by $\mathbb{W}_{\text{\'et},\ell} := R^mf^{\text{\'et}}_\ast(\underline{\Z_\ell})$ the associated $\Z_\ell$ local system on $P$, and by $W_\mathrm{dR} := (\mathcal{W},\nabla, F^\bullet)$ the filtered flat algebraic vector bundle on $P$ defined over $E$, such that $\mathbb{W}_\Z$ is the sheaf of flat sections of the associated analytified filtered flat bundle on $P^\hol$.  

We denote by $Y_{/E}$ the Stein factorization of the period map associated to the variation $(\mathbb{W}_\Z,F^\bullet)$, in the sense of \cite[\S 1.4]{bst-integral-canonical} (see also \cite{bbt-o-minimal-gaga}). Thus, 
$Y_{/E}$ is a quasi-projective algebraic variety over $E$, and the
associated period map $\phi : P_\C^\hol \rightarrow \Gamma \backslash D$, factors as a composite $P^\hol \xrightarrow{g^\hol} Y_{\C}^\hol \rightarrow \Gamma \backslash D,$ where the morphism $Y_{\C}^\hol \rightarrow \Gamma \backslash D$ is finite, and $P \xrightarrow{g} Y_{/E}$ is an algebraic map defined over $E$ with geometrically connected generic fiber. There also exists a smooth partial compactification $P'$ of $P$ defined over $E$ and a proper map $P' \rightarrow Y$, with $W_\mathrm{dR}$, $\mathbb{W}_{\text{\'et},\ell}$ (resp. $(\mathbb{W}_\Z,F^\bullet)$) extending to $P'$ (resp. $P^{\prime,\hol}$).

Note that the variation $(\mathbb{W}_\Z,F^\bullet)$ descends to a polarizable $\Z$ variation of Hodge structures $(\mathbb{V}_\Z,F^\bullet)$ on $Y_{\C}^\hol$, as do the $\Z_\ell$-\'etale local systems $\WetZl$ to $\Z_\ell$-\'etale local systems $\VetZl$ on $Y_{/E}$ (see \cite[\S 2.6]{bst-integral-canonical}).
The filtered flat bundle $W_\mathrm{dR} = (\mathcal{W},\nabla,F^\bullet)$ on $P$ descends to a filtered vector bundle on $Y_{/E}$ defined over $E$, denoted by $V_\mathrm{dR} = (\mathcal{V},F^\bullet)$. 

There is a finite set of places $\Sigma$ of $E$ such that:
\begin{itemize}
    \item $Z\rightarrow P$ spreads out to a smooth proper family over a smooth base $\cZ \rightarrow \cP$ over $\cO_{E,\Sigma}$.

    \item $P'\rightarrow Y$ spreads out to a proper map $\cP' \rightarrow \integralY$ over $\cO_{E,\Sigma}$ where $\cP'$ is a smooth partial compactification of $\cP$. 

    \item The filtered flat bundle $W_\mathrm{dR}/P'$ and the filtered bundle $V_{\dR}/Y$ spread out to a filtered flat bundle on $\cP'$ and a filtered bundle on $\integralY$.  We abusively denote the extensions by the same notation. The Griffiths bundle of $V_\mathrm{dR}$ on $\integralY$ is ample. 
    \item For every prime $\ell$, the local system $\WetZl$ extends to an $\ell$-adic local system on $\cP'_{\cO_{E,\Sigma_\ell}}$ where $\Sigma_\ell$ is the union of $\Sigma$ and all primes of $E$ dividing $\ell$.  We abusively denote these local system by $\WetZl$ as well.   Likewise, the $\Z_\ell$-\'etale local systems $\VetZl$ on $Y$ extend to $\Z_\ell$-local systems on $\integralY_{ \cO_{E,\Sigma_\ell}}$, which we denote by the same notation.

    \item $\mathcal{Y}$ is an integral canonical model (as in \cite{bst-integral-canonical}) of $Y$ over $\cO_{E,\Sigma}$. 

    \item There is a uniform stratified resolution with boundary $\cS^j \rightarrow \integralY^j$ of $\integralY$ over $\cO_{E,\Sigma}$ as in \cite[Definition 4.2]{bst-integral-canonical}, and for each $j$, there is a smooth scheme $\cT^j/\cO_{E,\Sigma}$ and maps $\cT^j \xrightarrow{t^j} \cP'$ and $\cT^j \xrightarrow{q^j} \cS^j$ as in \cite[Section 5.2]{bst-integral-canonical}.  For the largest-dimesnional stratum $\mathcal{Y}^m=\mathcal{Y}$, we may assume $\mathcal{P}'\to \mathcal{Y}$ factors through $\mathcal{S}^m$, and that $\mathcal{P}'\to\mathcal{S}^m$ has geometrically connected fibers.

    \item We let $\UetZl^j$ denote the pullback of $\VetZl|_{\integralY^j\times \cO_{E,\Sigma_\ell}}$ to $\cS^j \times \cO_{E,\Sigma_\ell}$, and let $\Udr^j$ be the pullback of $V_{\dR}|_{\integralY^j}$---note that $\Udr^j$ is a filtered flat bundle.
 
\end{itemize}

  The filtered flat bundle $W_{\mathrm{dR}}/\cP'$ has the structure of a Fontaine-Laffaille module at a prime $v\notin \Sigma$ and this corresponds to the local system $\mathbb{W}_{\textrm{et},p}$ for $v\mid p$ via the Faltings-Fontaine-Laffaile correspondence. There are two filtered flat bundles on $\cS^j$---one is $\Udr^j$, already defined. The other one is the filtered flat bundle underlying the Fontaine-Laffaille module associated to $\UetZp^j$. We denote this Fontaine-Laffaille module by $\UFL^j$. We note that both these filtered flat bundles become isomorphic when pulled back to $\cT^j$.  Note however that for the largest stratum $\mathcal{S}^m$, these two filtered flat vector bundles \emph{are} isomorphic, since $\mathcal{P}'\to\mathcal{S}^m$ has geometrically connected fibers.  

Henceforth, we fix once and for all a finite place $v \notin \Sigma$. Let $p$ denote the rational prime below $v$. Set $K := E_v$, with ring of integers $\cO_K$ and residue field $k$. We will let $Y, P, V_{\dR}$ (resp. $\integralY, \cS, \cT, $) etc. also denote the basechange of the corresponding objects from $E$ (resp. $\cO_{E,\Sigma}$) to $K$ (resp. $\cO_K$). Note that the $p$-adic (and $\ell$-adic) local systems on all the spaces agree under pullback. We will sometimes use the symbol $\bL$ to denote our $p$-adic local system(s) if the base is implicit (or unimportant). 
We denote by $Y^\BB$ the Baily--Borel compactification of the period image $Y$ (see \cite{bfmt-bailyborel}).

\subsection{General notations}

As above, we fix a rational prime $p$, a $p$-adic field $K$ with ring of integers $\cO_K$, and residue field $k$.

For a complete non-archimedean field extension $F$ of $K$, rigid-analytic varieties and spaces over $F$ shall be viewed as adic spaces over $\Spa(F,\cO_F)$. In particular, by a rigid-analytic variety over $F$, we shall mean a quasi-separated adic space that is locally of finite type over $\Spa(F,\cO_F).$ 
For an admissible formal $\cO_F$-scheme $\mathscr{X}/\Spf(\cO_F)$, we denote the associated rigid-analytic generic fiber by $\mathscr{X}^{\rig}$.
For an algebraic variety $X$ over $F$ (respectively a morphism $g : W \rightarrow X$ of algebraic varieties over $F$), we denote by $X^\an$ the associated rigid-analytic space over $\Spa(F)$ (resp. by $g^\an : W^\an \rightarrow X^\an$ the associated morphism of rigid-analytic spaces over $F$). 

For a complex algebraic variety $X \rightarrow \Spec(\C)$ (respectively a morphism of complex algebraic varieties $g : W \rightarrow X$) we denote the associated complex analytic space by $X^\hol$ (respectively the associated morphism of complex analytic spaces by $g^\hol$).

The rigid-analytic closed unit disk over $F$ is denoted by $\sD_F := \Spa(F\langle t \rangle, \cO_F\langle t \rangle).$ 
The punctured closed unit disk over $F$ is $\Dstar_F := \sD_F \setminus \{t = 0\}.$

\section{Boundary and interior}\label{Sec: bdry}

\begin{definition}
    Let $X$ denote either the Shimura variety $S$ or the period image $Y$, and let $\mathscr{X}$ denote the integral canonical model. Let $K$ be a discretely valued field with ring of integers $\cO_K$. We say that a point $x \in \mathscr{X}(K)$ has good reduction if its specialization lies in the interior, i.e.  $x$ is induced by an $\cO_K$-valued point of $\mathscr{X}$. We say that $x$ has bad reduction otherwise. Define the \emph{good reduction locus} $X^{\good}$ of $X$ to be the set of points of $X^\an$ whose mod $p$ specialization with respect to the canonical model lies in the interior. We define the bad reduction locus to be the complement in $X^\an$ of the good reduction locus. We note that $X^{\good}$ is an analytic open subspace of $X^\an$.

\end{definition}

We will first prove that a map $f: \Dstar \rightarrow X^\an$ must either be contained entirely in the good reduction locus or the bad reduction locus where $X$ is as above. This argument appears in an old arXiv version of \cite{bst-integral-canonical} (Lemma 6.4) (which is turn is essentially the same as the argument in the abelian case \cite[Theorem 3.3]{p-adic-borel-Ag}) but we include it in this paper for completion. 

\begin{theorem}\label{thm: all good or all bad}
    Let $f : \Dstar \rightarrow X^\an$ be an analytic map where $X$ is as above. Then either $f(\Dstar) \subseteq X^{\good}$ or $f(\Dstar) \subseteq (X^\an \setminus X^{\good})$. 
\end{theorem}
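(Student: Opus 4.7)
The plan is to show that the preimage $U := f^{-1}(X^{\good})$ is both open and closed in the connected rigid-analytic space $\Dstar$; by connectedness this forces $U = \emptyset$ or $U = \Dstar$, yielding the dichotomy claimed. Openness is immediate since $X^{\good} \subseteq X^{\an}$ is an analytic open subspace by definition. All of the work lies in closedness, and the strategy is the $\ell$-adic monodromy argument from \cite[Theorem 3.3]{p-adic-borel-Ag}, adapted to the Shimura and period image setting.

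The approach proceeds as follows. Fix an auxiliary prime $\ell \neq p$ coprime to $N$, and consider the pullback $\ell$-adic local system $\mathbb{L} := f^{\ast}\VetZl$ on $\Dstar_K$. The essential input is a N\'eron--Ogg--Shafarevich-type criterion: in the Shimura case this is \cite{PST} for exceptional Shimura varieties (a $K$-point lies in the good reduction locus iff the associated $\ell$-adic Galois representation is unramified), and in the period image case the analogous characterization via extension of the Hodge-theoretic data is developed in \cite{bst-integral-canonical}. In either setting, for a classical $K'$-point $x \in \Dstar(K')$ with $K'/K$ finite, we have $f(x) \in X^{\good}$ if and only if $\rho_{f(x)}\colon G_{K'} \to \mathrm{GL}(\mathbb{L}_x)$ is unramified.

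Suppose for contradiction we can find classical $K'$-points $x_0, x_1 \in \Dstar(K')$ with $\rho_{f(x_0)}$ unramified and $\rho_{f(x_1)}$ ramified. The sections $s_{x_0}, s_{x_1}\colon G_{K'} \hookrightarrow \pi_1^{\text{\'et}}(\Dstar_{K'})$ associated to these two points differ, modulo the geometric fundamental group $\pi_1^{\mathrm{geom}}(\Dstar_{\overline{K'}}) \cong \widehat{\Z}^{(p)}(1)$, by the Kummer cocycle $c\colon G_{K'} \to \widehat{\Z}^{(p)}(1)$ of the ratio $t(x_1)/t(x_0) \in (K')^{\times}$. Writing $M := \rho_{\mathbb{L}}(\gamma) \in \mathrm{GL}(\mathbb{L}_{x_0})$ for the monodromy around the puncture, one obtains
\[
\rho_{f(x_1)}(\tau) \;=\; M^{c(\tau)} \cdot \rho_{f(x_0)}(\tau), \qquad \tau \in G_{K'}.
\]
Grothendieck's local monodromy theorem for $\ell$-adic local systems on rigid spaces (cf.\ \cite{IKY}) gives that $M$ is quasi-unipotent, and the torsion-free level hypothesis promotes this to unipotent. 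The final step is to conclude $M = 1$, from which $\rho_{f(x_1)} = \rho_{f(x_0)}$ on inertia, contradicting our setup.

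The main obstacle is this last step: ruling out nontrivial unipotent monodromy when $f(\Dstar)$ contains a good reduction point. In the abelian case \cite{p-adic-borel-Ag} handles it via the N\'eron model and the Weil pairing on the Tate module; for the Shimura and period image setting one adapts the argument using the structural input from the Shimura datum (respectively the polarization of the VHS). Geometrically, nontrivial unipotent monodromy corresponds---via the log smooth toroidal compactification of $\mathscr{S}$ (respectively $\mathscr{P}'$) whose existence is built into the setup of \cite{bst-integral-canonical}---to the image of $f$ spiralling toward a boundary stratum of $X^{\BB}$, which is incompatible with the quasi-compactness of $\Dstar$ together with the existence of a classical point $x_0 \in \Dstar(K')$ with $f(x_0) \in X^{\good}$.
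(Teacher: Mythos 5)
Your reduction to showing that $f^{-1}(X^{\good})$ is open and closed, and your use of a N\'eron--Ogg--Shafarevich-type criterion via the $\ell$-adic local system $\VetZl$ ($\ell\neq p$), match the skeleton of the paper's argument, which also follows \cite[Theorem 3.3, Proposition 3.6]{p-adic-borel-Ag}. But there is a genuine gap, and it sits exactly where the paper's actual content lies. First, the Kummer-cocycle comparison $\rho_{f(x_1)}(\tau)=M^{c(\tau)}\rho_{f(x_0)}(\tau)$ is not available in the form you use it: the geometric \'etale fundamental group of the rigid punctured disc is \emph{not} $\widehat{\Z}^{(p)}(1)$ (there are many non-Kummer covers), so there is no globally defined monodromy operator $M$ on $\Dstar$ relating the inertia actions at two arbitrary classical points. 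This is precisely why both \cite{p-adic-borel-Ag} and the present paper work annulus by annulus: the comparison of inertia representations is made on the special fiber $\Gm$ of a formal model of a \emph{thin} annulus, and one then has to pass from thin to thick annuli. Second, and more seriously, your final step --- ``conclude $M=1$'' --- is exactly the theorem in disguise, and the justification you offer (spiralling toward a boundary stratum being ``incompatible with the quasi-compactness of $\Dstar$'') does not work: $\Dstar=\sD\setminus\{t=0\}$ is not quasi-compact, and in any case the dichotomy permits the all-bad-reduction alternative, so no soft compactness statement can rule out nontrivial monodromy.

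What the paper actually does at this point is different in substance. After invoking \cite[Proposition 3.6]{p-adic-borel-Ag} for thin annuli, the passage to thick annuli is reduced to the statement that every map $g:\Gm\to\mathscr{X}_p$ over $\Fpbar$ extends to $\PP^1\to\mathscr{X}_p$. This is proved by showing the local monodromy of $g^{-1}\VetZl$ at the two boundary points is semi-simple (hence, being quasi-unipotent, of finite order, so the boundary points have good reduction by the $\F((t))$ case of the N\'eron--Ogg--Shafarevich lemma). Semi-simplicity comes from Deligne's Weil II \cite[3.4.12]{Weil2} once one knows $\VetZl$ is point-wise pure, which holds because in the Shimura case $\VetZl$ comes from the adjoint representation of $G$ (irreducible with finite determinant) and in the period-image case it is of geometric origin. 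None of this purity/Weil II input appears in your proposal, and replacing it by ``structural input from the Shimura datum'' or ``polarization of the VHS'' without an argument leaves the key step unproved. (Minor points: good reduction for $\ell\neq p$ gives unramifiedness because $\VetZl$ extends over the integral canonical model, so your criterion is fine in substance, but the paper's Lemma~\ref{lem: NOS} is stated as ``bad $\iff$ quasi-unipotent of infinite order''; and \cite{IKY} concerns prismatic $F$-crystals, not local monodromy of $\ell$-adic sheaves on rigid spaces.)
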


The following lemma is an analogue of the Neron-Ogg-Shafarevich criterion and follows directly by the arguments of \cite[Lemma 8.4]{PST}. 
\begin{lemma}\label{lem: NOS}
Let $K$ either be $\F((t))$ or a discretely valued $p$-adic field, and let $x\in X(K)$ be a point.  Then $x$ has bad reduction if and only if the action of the inertia subgroup $I_K \subset \gal_K$ on $(\VetZl)_{x}$ is quasi-unipotent of infinite order.
\end{lemma}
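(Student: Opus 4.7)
\emph{Proof plan.} The claim is essentially the Néron–Ogg–Shafarevich criterion in this generality, and the strategy follows \cite[Lemma 8.4]{PST}. I work throughout with $\ell$ coprime to the residue characteristic of $K$, so that $\VetZl$ is a lisse $\Z_\ell$-sheaf on the integral canonical model $\mathscr{X}$ that extends the one on $X$.

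For the forward direction, suppose $x$ has good reduction, so $x$ is induced by some $\tilde x : \Spec \cO_K \to \mathscr{X}$. Then $\tilde x^* \VetZl$ is a lisse $\Z_\ell$-sheaf on $\Spec \cO_K$, hence unramified, so $I_K$ acts trivially on $(\VetZl)_x$, and in particular not quasi-unipotently of infinite order.

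For the converse, suppose $x$ has bad reduction. By the setup, $\mathscr{X}$ admits a log-smooth compactification $\bar{\mathscr{X}}/\cO_K$ whose boundary $\bar D$ is a relative simple normal crossings divisor. By properness, $x$ extends to $\bar x : \Spec \cO_K \to \bar{\mathscr{X}}$, and the bad-reduction hypothesis means the image of the closed point lies on $\bar D$. Passing to the strict henselization at $\bar x(s)$ and then to the punctured generic fibre, the $I_K$-action on $(\VetZl)_x$ factors through the image of inertia in the tame quotient of the local fundamental group of $\mathscr{X}$ around $\bar D$, which is topologically generated by the loops around the boundary components through $\bar x(s)$. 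It therefore suffices to show that the local monodromy of $\VetZl$ around each such component is quasi-unipotent of infinite order.

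Quasi-unipotence is the classical monodromy theorem applied to $\VetZl$ (via Borel's theorem in the Shimura case, or the Landman–Deligne theorem applied to the underlying $\Z$-variation of Hodge structure in the period-image case). Infiniteness of order is the monodromy-theoretic characterization of the boundary: in the Shimura case, for torsion-free level the local monodromy around a boundary component is a non-trivial unipotent element coming from the unipotent radical of the associated rational parabolic, which is the content imported from \cite{PST}; in the period-image case, the partial compactification $\cP' \to \integralY$ chosen in the setup is maximal for extension of the period map, so any component of its boundary must carry infinite-order local monodromy — otherwise the period map would extend further across it. The main obstacle is precisely this infinite-order assertion, which is the non-trivial input we borrow from \cite{PST} (in the Shimura case) and from the construction of the maximally-extended period map $\cP' \to \integralY$ (in the period-image case).
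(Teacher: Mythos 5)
The easy direction of your argument (good reduction $\Rightarrow$ unramified inertia) is fine and matches what anyone would write; note also that the paper itself gives no argument at all beyond saying the statement "follows directly by the arguments of \cite[Lemma 8.4]{PST}", i.e.\ the intended proof is to run the N\'eron--Ogg--Shafarevich argument of PST, whose hard content is that finite-order inertia forces the point to extend to the integral canonical model via its extension property. Your route for the hard direction is different: you pass to a log-smooth compactification, write the inertia image as a product of positive powers of loops around the boundary components through the specialization, and then need every such loop to act with infinite order.

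That last step is where there is a genuine gap. In the period-image case your justification --- ``$\mathcal{P}'$ is maximal for extension of the period map, so any component of its boundary must carry infinite-order local monodromy'' --- does not do the job, and in fact points the wrong way: $\mathcal{P}'$ is a partial compactification of $\mathcal{P}$, not of $\mathcal{Y}$, and the boundary divisors of $\mathcal{P}\subset\mathcal{P}'$ are precisely those across which the local systems \emph{extend}, i.e.\ the finite-monodromy directions; the directions with infinite monodromy are the ones \emph{not} in $\mathcal{P}'$. What your argument actually needs is that every boundary component of a compactification of the integral canonical model $\mathcal{Y}$ over $\cO_K$ forces infinite-order inertia on $\VetZl$ for any $K$-point specializing into it --- and this is essentially equivalent to the N\'eron-type extension property of $\mathcal{Y}$ that the cited PST-style argument (and the construction in \cite{bst-integral-canonical}) supplies; it is not a consequence of the geometry of $\mathcal{P}'$. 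Two further points are left unaddressed even in the Shimura case: (i) knowing each individual $\gamma_i$ has infinite-order unipotent image does not immediately give that the specific element $\prod_i\gamma_i^{n_i}$ (commuting quasi-unipotents, $n_i>0$) has infinite order --- one must rule out cancellation, e.g.\ via the positivity of the nilpotent cone / weight-filtration argument or the explicit parabolic description; and (ii) for $K=\F((t))$ the Hodge-theoretic inputs you invoke (Borel, Landman--Deligne) apply to the characteristic-zero boundary monodromy, and one still has to transfer the quasi-unipotence and infinite-order statements to the equal-characteristic point via tame specialization, which your sketch does not mention.
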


\begin{proof}[Proof of \autoref{thm: all good or all bad}]
\cite[Proposition 3.6]{p-adic-borel-Ag} does what is required for thin annuli. To deduce the result for thick annuli from thin annuli, we proceed along identical lines to the argument in \cite[Section 3.1.2]{p-adic-borel-Ag}. We are reduced to proving the following result. Let $g: \Gm \rightarrow \mathscr{X}_p$ be any map. Then, $g$ extends to a map $\PP^1 \rightarrow \mathscr{X}_p$. To prove this, it suffices to prove that the local monodromy of $g^{-1}\VetZl$ around the boundary points is semi-simple. To prove this, it suffices to prove that the geometric monodromy of $g^{-1}\VetZl$ is semisimple. By \cite[3.4.12]{Weil2}, it suffices to prove that the arithmetic local system $g^{-1}\VetZl$ is point-wise pure---of course, this would follow from proving that $\VetZl$ itself were pure. 

In the Shimura case, this follows from the fact that $\VetZl$ is induced by the adjoint representation of $G$, and is therefore an irreducible local system with finite determinant. In the case of geometric period maps, this follows from the fact that the local system is induced by a geometric family, and is therefore point-wise pure by \cite{Weil2}.  
\end{proof}

In the setting of Shimura varieties, we will need the following strengthening of Theorem \ref{thm: all good or all bad}, which is a generalization of \cite[Theorem 3.3]{p-adic-borel-Ag} to the setting of non-abelian Shimura varieties. 

We recall for the convenience of the reader that we use the term \emph{Baily-Borel stratum} for the inverse image in $\bar{\scrS}$ of some boundary stratum in $\scrS^{\BB}$. 

\begin{proposition}\label{prop: tube over bb stratum}
   Let $f: \Dstar \rightarrow S^{\an}$ be a map whose image is contained in the bad-reduction locus. Then, there is a Baily-Borel stratum $\scrT$ such that the image of $f$ is contained in the tube $T_{\scrT}$ over $\scrT$.  
\end{proposition}
\begin{proof}
Fix a Baily-Borel stratum $\scrT$. Similar to Lemma \ref{lem: NOS}, we have that the rank of inertial invariants of $\VetZl|_x$ is independent of the point $x\in T_{\scrT} \setminus \scrT_{E_v}^{\an}$. We will refer to this number as the \emph{inertial rank} of $T_{\scrT}$. Unlike the case of $\Ag$, it isn't a-priori true that the rank of inertial invariants determines the Baily-Borel stratum we work with. However, given two open boundary strata $\scrS_1, \scrS_2 \subset \scrS^{\BB}$ (with associated Baily-Borel strata $\scrT_1,\scrT_2 \subset \bar{\scrS}$), we have that the Zariski-closure of $\scrS_1$ intersects $\scrS_2$ if and only if the inertial rank of $\scrT_1$ is strictly greater than the inertial rank of $\scrT_2$. This is proved similar to Lemma \ref{lem: NOS} following \cite[Lemma 8.4]{PST}, and we omit the argument. 

We now make the following claim. 
\begin{claim}\label{intertialinvariants determines BB stratum}
    Let $\Ann \subset \Dstar$ be any annulus and suppose that $f^{-1}(\VetZl)$ has the property that the rank of inertial invariants is constant. Then $f(\Ann)$ is contained in a the tube over a single Baily-Borel stratum.
\end{claim}
\begin{proof}
    We pick some formal model $\fA \rightarrow \bar{\scrS}$ of $f: \Ann \rightarrow S^{\an}$. The special fiber of $\fA$ is necessarily connected\footnote{We note that $\Ann$ can be replaced by any connected rigid sub-variety of $\Dstar$ - as all we will use for this claim is the special fiber being connected}. It suffices to prove that each irreducible component $C_i$ of the special fiber of $\fA$ maps to a single Baily-Borel stratum. Indeed, mapping $C_i$ to $\scrS^{\BB}$, it suffices to prove that $C_i$ maps to a single open boundary stratum. There is a unique open boundary stratum $\scrS_1$ which contains the image of some open subvariety $U\subset C_i$. If $U\neq C_i$, there is a point $x \in C_i$ which maps to some open boundary stratum $\scrS_2$. It follows that the closure of $\scrS_1$ intersects (and therefore contains) $\scrS_2$, whence the inertial rank of $\scrT_1$ is strictly greater than the inertial rank of $\scrT_2$. This contradicts our hypothesis that the inertial rank of $f^{-1}(\VetZl)$ is constant, and the claim follows. 
\end{proof}

The result \cite[Proposition]{p-adic-borel-Ag} and the argument in the proof of \cite[Theorem 3.4]{p-adic-borel-Ag} immediately proves the statement that the rank of inertial invariants is constant as one varies over a thin annulus. By the claim above, we therefore have that each thin annulus maps to a single Baily-Borel stratum. It suffices to prove that each thick annulus also maps to a single Baily-Borel stratum. Identical to \cite[Section 3.1.2]{p-adic-borel-Ag}, we are reduced to proving the following result. Let $g: {\Gm}_{\F_v} \rightarrow \scrT_{\F_v}$ be a map. Then $g$ extends to a map $\PP^1 \rightarrow \scrT_v$. Composing $g$ by the map $\bar{\scrS} \rightarrow \scrS^{\BB}$, we are reduced to the setting of Theorem \ref{thm: all good or all bad}, and we conclude. 
\end{proof}


\section{Crystalline $p$-adic local systems and analytic prismatic $F$-crystals}\label{sec: prismatic setup}

In this section, we recall the notions of crystalline local systems and $F$-crystals that are used in the article. To simplify notations, we will work in the setting of smooth (formal) schemes. The notion of $F$-crystals and prismatic $F$-crystals extend naturally to the logarithmic setting, which will be used in Section \ref{section:bad_reduction_case}.

\begin{definition} \label{def:isocrystals} Let $X/k$ be a smooth scheme. Let $X_{\textup{crys}}$ denote the $p$-completed crystalline site of $X$,\footnote{In this article, we shall consider the \textit{absolute} crystalline site of $X$, or equivalently, crystalline site of $X$ over the divided power algebra $(W(k), p)$.} equipped with the structure sheaf $\mathcal{O}_{X, \textup{crys}}$.   
\begin{enumerate**}
    \item  By a \textit{crystal} over $X$ we mean a \textit{finite locally free crystal} or equivalently, a \textit{crystal of vector bundles} over $X$, that is, a sheaf of $\mathcal{O}_{X, \textup{crys}}$-modules $\mathbb{E}$ such that for each PD-thickening $(U, T)$ in  $X_{\mathrm{crys}}$, the induced Zariski sheaf $\mathbb{E}_{T}$ is a finite locally free $\mathcal{O}_T$-module, such that for each morphism 
    $g: (U', T') \rightarrow (U, T)$  in $X_{\mathrm{crys}}$, the induced map $    g^* \mathbb{E}_T \isom \mathbb{E}_{T'} $ is an isomorphism. 
    \item  An \textit{isocrystal} over $X$ is an object in the isogeny category of  crystals of modules.  All of the isocrystals we will consider will in fact be obtained from a crystal (in vector bundles) by inverting $p$.    
    \item  An \textit{$F$-crystal}  (resp. \textit{$F$-isocrystal}) over $X$ consists of a pair $(\mathbb{E}, \varphi)$ where $\mathbb{E}$ is a  crystal (resp.  isocrystal) over $X$ and $\varphi$ is an isomorphism 
    \[ \varphi: F_{\mathrm{crys}}^* \mathbb{E} [1/p] \isom  \mathbb{E} [1/p] \] which is compatible with the Frobenius map $F_{\mathrm{crys}}$ on $\mathcal{O}_{X, \textup{crys}}$ induced by functoriality. 
\end{enumerate**}
We write $\Vect^{\varphi}(X_{\textup{crys}})$ (resp. $\textup{Isoc}^{\varphi}(X_{\textup{crys}})$) for the category of $F$-crystals (resp. $F$-isocrystals) over $X$. 
\end{definition}

We also need the notion of prismatic and analytic prismatic $F$-crystals. Let us first recall that, given a $p$-adic formal scheme $\fX/\mathcal{O}_K$, its absolute prismatic site $\fX_{\Prism}$ is the opposite of the category of bounded prisms $(A, I)$ equipped with a map $\Spf A/I \rightarrow \fX$, endowed with the flat topology (on prisms). Let $\mathcal{O}_{\Prism}$ (resp. $\mathcal{I}_{\Prism}$) denote the structure sheaf (resp. the Hodge--Tate sheaf) on $\fX_{\Prism}$, which sends $(A, I) \mapsto A$ (resp. sends $(A, I) \mapsto I$). Let $\varphi_{\Prism}$ denote the Frobenius map on $\mathcal{O}_{\Prism}$. 
 
\begin{example} \label{Example:prisms}
 Let $E = E(u)$ be an Eisenstein polynomial for a fixed uniformizer $\varpi \in \mathcal{O}_K$.  
\begin{enumerate}
    \item The \textit{Breuil--Kisin} prism $(\mathfrak S, E)$ with $\mathfrak S = W (k)[\![u]\!]$ and $\varphi_{\mathfrak{S}} (u) = u^p$ gives an object in $(\Spf \mathcal O_K)_{\Prism}$ via the surjection $\mathfrak S \rightarrow \mO_K$ sending $u \mapsto \varpi$. In fact, by the argument in \cite[Example 2.6(1)]{Bhatt-Scholze-crystal} it covers the final object of the topos $\textup{Shv}((\Spf \mathcal O_K)_{\Prism}, \mathcal O_{\Prism})$. Taking the Cech nerve of $(\mathfrak S, E)$ over the final object in this topos gives rise to a cosimplicial object 
    \begin{equation} \label{eq:simplicial_BK}
     \mathfrak S \mathrel{\substack{\textstyle\longrightarrow\\[-0.6ex] \textstyle\longrightarrow}}  \mathfrak S^{(1)}  \mathrel{\substack{\textstyle\longrightarrow\\[-0.6ex] \textstyle\longrightarrow \\[-0.6ex] \textstyle\longrightarrow}} \mathfrak S^{(2)} 
\end{equation}
     in $(\Spf \mO_K)_{\Prism}$. One can explicitly describe the Prisms $(\mathfrak S^{(i)}, E)$ in terms of the prismatic envelop construction. For example, we have $\mathfrak S^{(1)}= W [\![u, v]\!] \left\{\frac{u - v}{E } \right\}^{\wedge}_{(p, E)}$, where $\{\cdot\}$ denotes the prismatic envelop.  
    \item Let $R =  (\mathcal O_K 
    [t])^{\wedge}_{p}$ be the $p$-adic completion of $\mathcal O_K [t]$ and let $R_0 = (W [t_0])^{\wedge}_p$. Let $\fX = \Spf R$ be the $p$-adic formal $\A^1$ over $\Spf \mathcal O_K$. 
    Let $\mathfrak S_R = R_0 [\![u]\!]$, equipped with a $\delta$-structure given by $\varphi (u) = u^p$ and $\varphi (t_0) = t_0^p$. As in the previous example, we have a surjection $\mathfrak S_R \rightarrow  R$ sending $u \mapsto \varpi$ and $t_0 \mapsto t$, which makes  $(\mathfrak S_R, E)$ into an object in $\mathfrak X_{\Prism}$. 
    \end{enumerate}
\end{example}

\begin{lemma}
    In the second example above, the prism $(\mathfrak S_R, E)$ covers the final object in the topos of $(\fX_{\Prism}, \mathcal O_{\Prism})$.

\end{lemma}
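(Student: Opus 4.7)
The plan is to mimic the argument for $(\mathfrak{S}, E)$ over $\Spf \mO_K$ recalled above, augmented by one additional prismatic envelope step to match up the new coordinate $t$. For an arbitrary object $(A, I) \in \fX_{\Prism}$ with structural map $R \to A/I$ sending $\varpi \mapsto \bar\varpi$ and $t \mapsto \bar t$, the goal is to produce a cover $(A, I) \to (B, J)$ in $\fX_{\Prism}$ equipped with a map of prisms $(\mathfrak{S}_R, E) \to (B, J)$ over $\fX$.

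I would begin by applying \cite[Example 2.6(1)]{Bhatt-Scholze-crystal} to the composite $\Spf A/I \to \fX \to \Spf \mO_K$, producing a faithfully flat cover $(A, I) \to (A', I')$ in $(\Spf \mO_K)_{\Prism}$ together with a map of prisms $(\mathfrak{S}, E) \to (A', I')$, i.e.\ a $\delta$-flat lift $\tilde\varpi \in A'$ of $\bar\varpi$ such that $d := E(\tilde\varpi)$ generates $I'$. Next I would base change along $W \to R_0$, setting $A'' := R_0 \widehat\otimes_W A' \cong (A'[t_0])^{\wedge}_p$ with $\delta(t_0) = 0$; this is a $\delta$-$A'$-algebra that is $(p, I')$-completely faithfully flat, so $(A'', I'A'')$ is a cover of $(A', I')$ in the prismatic site of $\fX$, equipped with a built-in $\delta$-flat element $t_0$ --- but one which does not yet match the structural element $\bar t$.

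The final key step is to form the prismatic envelope
\[
A''' := A''\bigl\{(t_0 - \tilde t)/d\bigr\}^{\wedge}_{(p, I'A'')}
\]
for any choice of set-theoretic lift $\tilde t \in A'$ of $\bar t$. By the standard theory of prismatic envelopes from \cite{Bhatt-Scholze-crystal}, the map $A'' \to A'''$ is $(p, I)$-completely faithfully flat --- the underlying regularity being that $t_0 - \tilde t$ is a free linear variable and $d$ is a distinguished generator of a prism ideal. By construction $t_0 \equiv \tilde t \equiv \bar t \pmod{I'A'''}$, so the $\delta$-ring map $\mathfrak{S}_R = R_0 \widehat\otimes_W \mathfrak{S} \to A'''$ sending $u \mapsto \tilde\varpi$ and $t_0 \mapsto t_0$ sends $E$ to $d \in I'A'''$ and is over $\fX$, yielding the desired map of prisms, while the composite $(A, I) \to (A''', I'A''')$ is the desired cover. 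The main obstacle is this joint $\delta$-lifting problem: an arbitrary prism over $\fX$ has no canonical $\delta$-lift of either $\bar\varpi$ or $\bar t$, and even after the lifts are separately produced they do not come with a built-in match to the structural map on the $t$ coordinate; the envelope in the last step simultaneously supplies the second lift and forces the required congruence modulo $I'$.
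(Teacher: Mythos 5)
Your proof is correct and is essentially the paper's argument in factored form: the paper adjoins both $\delta$-variables $u$ and $t_0$ at once via $(A \otimes_W \mathfrak{S}_R)^{\wedge}_{(p,I)}$ and takes a single prismatic envelope at $(u-\tilde\varpi,\, t_0-\tilde t)/I$ with arbitrary lifts $\tilde\varpi,\tilde t \in A$, invoking the same two inputs you use (flatness of prismatic envelopes and the criterion that $E(u)\in IB$ yields a map of prisms), whereas you reach the same envelope in stages by first quoting the $\Spf\mathcal{O}_K$ case for the $u$-variable. The only point to tidy is that your intermediate ring $A''$ should be $(p,I')$-completed, not merely $p$-completed, so that $(A'',I'A'')$ is genuinely a prism.
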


\begin{proof}
 Let $(A, I)$ be a prism in the absolute prismatic site $\fX_{\Prism}$, equipped with a map $\iota: R \rightarrow A/I$. Note that $A$ is canonically a $W$-algebra. Let us pick an element $\widetilde \varpi$ (resp. $\widetilde t$) in $A$ which lifts the image of $\varpi$ (resp. of $t$) in $A/I$ under $\iota$. Let us consider the map $A \rightarrow (A  \otimes_W \mathfrak S_R)^{\wedge}_{(p, I)}$ of $\delta$-rings and form the prismatic envelope (see  \cite[Proposition 3.13]{prism}) 
 \[
  B := (A \otimes_W \mathfrak S_R) \{\frac{u- \widetilde \varpi, t_0 - \widetilde t}{I}\}^{\wedge}_{(p, I)},
 \]
 so we have a map $(A, I) \rightarrow (B, IB)$ of prisms in $\mathfrak X_{\Prism}$. By \cite[Proposition 3.13]{prism}, the map $A \rightarrow B$ is $(p, I)$-completely flat, and in fact $(p, I)$-completely faithfully flat, so $(A, I) \rightarrow (B, IB)$ forms a cover in $\mathfrak X_{\Prism}$.  Finally, note that by construction we have $E(u) = E (\widetilde \varpi) = 0 \mod IB$, so we have $E(u) \in IB$ and thus we have a map of prisms  $(\mathfrak S_R, E(u)) \rightarrow (B, IB)$ by \cite[Lemma 2.24]{prism}. This finishes the proof of the lemma. 
\end{proof}

\begin{definition}[{\cite{Bhatt-Scholze-crystal}}] Let $\fX/ \mathcal{O}_K$ be a smooth $p$-adic formal scheme.    A \textit{prismatic crystal (of vector bundles)} over $\fX$ is an $\mathcal O_{\Prism}$-module $\mathcal E$, such that there exists bounded prisms $(A_i, I_i)$ in $\fX_{\Prism}$, with $\{U_i = \Spf (A_i/I_i)\} $ covering the final object of the topos of $(\fX_{\Prism}, \mathcal O_{\Prism})$, and finite projective $A_i$-modules $E_i$, such that $\mathcal E|_{U_i} \cong E_i \otimes_{A_i} \mathcal O_{\Prism, U_i}$. A \textit{prismatic $F$-crystal} over $\mathfrak X$ is pair $(\mathcal{E}, \varphi_{\mathcal{E}})$, where $\mathcal E$ is a prismatic crystal and $\varphi_{\mathcal{E}}$ is an isomorphism \[ 
\varphi_{\mathcal{E}} : \varphi_{\Prism}^* \mathcal E [1/\mathcal I_{\Prism}] \isom \mathcal E [1/\mathcal I_{\Prism}]
\]    
of $\mO_{\Prism}$-modules. We write $\Vect (\fX_{\Prism}, \mO_{\Prism})$ (resp. $\Vect^{\varphi}(\fX_{\Prism}, \mO_{\Prism})$) for the category of prismatic crystals (resp. prismatic $F$-crystals) over $\fX$. 
\end{definition}

Following notations from \cite{Bhatt-Scholze-crystal}, for a given bounded prism $(A, I)$, we write $\Vect (A)$ for the category of finite projective $A$-modules. We write $\Vect^{\varphi} (A, I)$ for the category of pairs $(E, \varphi_E)$ that consists of a finite projective $A$-module $E$ together with an $A$-linear isomorphism $\varphi_E: \varphi_A^* E [1/I] \isom E [1/I]$, with morphisms being morphisms between the finite projective $A$-modules that are compatible with  Frobenius. By $(p, I)$-completely faithfully flat descent for vector bundles (see \cite[Proposition 2.7]{Bhatt-Scholze-crystal}), we have natural equivalences  
\begin{align*}
     \Vect (\fX_{\Prism}, \mO_{\Prism}) & \isom  \lim_{(A, I) \in \fX_{\Prism}} \Vect (A) \\ 
     \Vect^{\varphi}(\fX_{\Prism}, \mO_{\Prism})  & \isom  \lim_{(A, I) \in \fX_{\Prism}} \Vect^\varphi(A, I)
\end{align*}
In particular, to specify a prismatic $F$-crystal over $\fX$ is equivalent to specifying a prismatic $F$-crystal over  each bounded prism $(A, I) \in \fX_{\Prism}$ in a compatible fashion. If $(A_0, I_0) \in \fX_{\Prism}$ is a bounded prism that covers the final object of $\textup{Shv} (\fX_{\Prism}, \mO_{\Prism})$, this is also equivalent to the data of a prismatic $F$-crystal over $(A_0, I_0)$ which satisfies certain the descent data coming from the Cech nerve of this cover. We will also need the following variant. 

\begin{definition}[{\cite{GuoReinecke}}]  \label{def:Guo_Reinecke_analytic}
\begin{enumerate**}
    \item We define the category of \textit{analytic prismatic $F$-crystals} over   a bounded prism  $(A, I)$, denoted by $\Vect^{\textup{an},\varphi} (A, I)$, to be the category of pairs $(E, \varphi_E)$ where $E$ is a vector bundle over $\spec (A) \backslash V(p, I)$, and $\varphi_E$ is an  isomorphism $\varphi_A^* E[1/I] \isom E[1/I]$. Morphisms in $\Vect^{\textup{an},\varphi} (A, I)$ are morphisms between vector bundles that are compatible with Frobenius.
\footnote{Note that, for a prism $(A, I)$, the Frobenius $\varphi_A$ preserves the zero locus $V(p, I) \subset \spec A$ as well as its complement. Thus the definition above makes sense.} 
    \item  Let $\fX/ \mathcal{O}_K$ be a smooth $p$-adic formal scheme. We define the category $\Vect^{\textup{an}, \varphi} (\fX_{\Prism})$ of \textit{analytic prismatic $F$-crystals} by the derived limit
    \[ 
    \Vect^{\textup{an}, \varphi} (\fX_{\Prism}) := \lim_{(A, I) \in \fX_{\Prism}} \Vect^{\textup{an}, \varphi}(A, I). 
    \]
\end{enumerate**}
\end{definition}
 
    We have a natural forgetful functor 
    $ \Vect^{\varphi} (\fX_{\Prism}) \longrightarrow 
    \Vect^{\textup{an}, \varphi} (\fX_{\Prism}) $  
    from prismatic $F$-crystals to analytic prismatic $F$-crystals. This functor is induced from the map $ \Vect^{\varphi}(A, I) \rightarrow  \Vect^{\textup{an}, \varphi}(A, I)$
    that sends a vector bundle over $\spec A$ to its restriction over the complement of $V(p, I)$ for each $(A, I) \in \fX_{\Prism}$ and is fully faithful. Moreover, it is compatible with crystalline realizations, in the sense that we have a commutative diagram
    \begin{equation} \label{eq:prismatic_F_crystals_restriction_and_realization}
     \begin{tikzcd}
    \Vect^{\varphi} (\fX_{\Prism})     \arrow[r] \arrow[d] &   \Vect^{\textup{an}, \varphi} (\fX_{\Prism}) \arrow[d]\\
    \Vect^{\varphi}(\fX_{s, \textup{crys}}) 
    \arrow[r] & \textup{Isoc}^{\varphi}(\fX_{s, \textup{crys}})
     \end{tikzcd}
    \end{equation}
    where the vertical arrows are induced by specializing to the special fiber $\fX_{s}$ of $\fX$ and identifying the absolute prismatic site $\fX_{s, \Prism}$ of $\fX_s$ with the $p$-completed absolute crystalline site $\fX_{s, \textup{crys}}$ of $\fX_s$ (see \cite[Construction 4.12]{Bhatt-Scholze-crystal}).    Let us consider a special case of this restriction functor in the setting of Example \ref{Example:prisms}.  
\begin{lemma} \label{lemma:an_crystal_to_alg_crystal}  Let $\fX = \Spf R$, where $R = (\mathcal O_K 
    [t])^{\wedge}_{p}$. Let $U = \spec ( \mathfrak S_R)  \backslash V(p, E(u)) $ be the open subset of $\spec (\mathfrak S_R)$ and denote by $ j: U \rightarrow \spec (\mathfrak S_R) $ the open immersion. The essential image of the fully faithful functor
    \[ 
    \Vect^{\varphi} (\fX_{\Prism}) \longrightarrow 
    \Vect^{\textup{an}, \varphi} (\fX_{\Prism}) 
    \]
consists of pairs $(\mathcal E, \varphi_{\mathcal E})$ satisfying the following condition: if we write $(E_U, \varphi_{U})$ for the vector bundle over $U$ obtained by evaluating $(\mathcal E, \varphi_{\mathcal E})$ on the prism $(\mathfrak S_R, E(u)) \in \fX_{\Prism}$, then $j_* E_U$ is a vector bundle over $\spec \mathfrak S_R$. 
\end{lemma}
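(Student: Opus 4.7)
The plan is to prove both inclusions of essential images, using the key geometric observation that $\spec \mathfrak S_R$ is a regular Noetherian scheme of dimension $3$ (since $R_0 = (W[t_0])^{\wedge}_p$ has dimension $2$ and $\mathfrak S_R = R_0[\![u]\!]$), and that $V(p, E(u)) = V(p, u)$—using that $E(u) \equiv u^e \cdot (\text{unit}) \pmod{p}$—has codimension $2$ in $\spec \mathfrak S_R$. As a consequence, for every vector bundle $F$ on $\spec \mathfrak S_R$ the canonical map $F \to j_* j^* F$ is an isomorphism, since vector bundles on regular Noetherian schemes are reflexive and the complement of $U$ has codimension at least $2$.

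For the necessity direction, I would begin with a prismatic $F$-crystal $\mathcal E' \in \Vect^\varphi(\fX_{\Prism})$ whose image under the restriction functor is $(\mathcal E, \varphi_{\mathcal E})$. Evaluating $\mathcal E'$ on $(\mathfrak S_R, E(u))$ produces a vector bundle $F$ on $\spec \mathfrak S_R$ whose restriction to $U$ is precisely $E_U$, so $j_* E_U = j_* j^* F = F$ is indeed a vector bundle.

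For the sufficiency direction, assume $F := j_* E_U$ is a vector bundle on $\spec \mathfrak S_R$. I would first promote $F$ to an $F$-crystal over $(\mathfrak S_R, E(u))$ by observing that $\spec \mathfrak S_R[1/E(u)] \subseteq U$, so the analytic Frobenius $\varphi_U : \varphi_{\mathfrak S_R}^* E_U [1/E(u)] \isom E_U[1/E(u)]$ transports directly to an isomorphism $\varphi_F : \varphi_{\mathfrak S_R}^* F[1/E(u)] \isom F[1/E(u)]$. To upgrade this to a prismatic $F$-crystal on $\fX$, I would invoke the descent description $\Vect^\varphi(\fX_{\Prism}) \isom \lim \Vect^\varphi(\mathfrak S_R^{(\bullet)}, E^{(\bullet)})$ coming from the Cech nerve of the cover $(\mathfrak S_R, E(u))$. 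The analytic structure supplies descent isomorphisms $\alpha^{(1)} : p_1^* E_U \isom p_2^* E_U$ over $U^{(1)} = \spec \mathfrak S_R^{(1)} \setminus V(p, E^{(1)})$, and using the identity $p_i^{-1}(V(p, E(u))) = V(p, E^{(1)})$ we identify $(p_i^* F)|_{U^{(1)}}$ with $p_i^* E_U$. Extending $\alpha^{(1)}$ from $U^{(1)}$ to an isomorphism $p_1^* F \isom p_2^* F$ on all of $\spec \mathfrak S_R^{(1)}$—and similarly propagating the cocycle condition to $\mathfrak S_R^{(2)}$—reduces to knowing that vector bundles on $\spec \mathfrak S_R^{(i)}$ are $S_2$ and that $V(p, E^{(i)}) \subset \spec \mathfrak S_R^{(i)}$ has codimension at least $2$.

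The main obstacle is verifying these depth and codimension properties for the Cech nerve terms $\mathfrak S_R^{(i)}$, which are $(p,E)$-completions of prismatic envelopes over $\mathfrak S_R$. I would address this by an explicit computation of $\mathfrak S_R^{(i)}$ in parallel with the description of the absolute Breuil--Kisin Cech nerve in \cite{Bhatt-Scholze-crystal}, adjoining the relative parameter $t_0$; adjoining a formal variable only increases the ambient dimension without shrinking the codimension of $V(p, E^{(i)})$, and preserves normality of the envelopes. Once these properties are in hand, reflexive extension uniquely propagates both the descent isomorphism and its cocycle condition from the open subsets $U^{(i)}$ to the full schemes $\spec \mathfrak S_R^{(i)}$, yielding the desired prismatic $F$-crystal structure on $\fX$.
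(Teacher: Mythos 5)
The paper does not prove this lemma directly: it cites \cite[Proposition 1.26]{IKY}, so your task amounts to reconstructing that result. Your necessity direction is fine: $\mathfrak S_R=R_0[\![u]\!]$ is regular of dimension $3$, $V(p,E(u))=V(p,u)$ has codimension $2$, and a vector bundle on a regular Noetherian scheme is reflexive, so $F\isom j_*j^*F$ and hence $j_*E_U$ is a vector bundle whenever the analytic crystal comes from a genuine one. Transporting the Frobenius through $\spec \mathfrak S_R[1/E(u)]\subset U$ is also fine.

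The gap is in the sufficiency direction, exactly at the step you flag as "the main obstacle." To descend along the cover $(\mathfrak S_R,E)$ of the final object you must extend the descent isomorphism and the cocycle condition from $U^{(i)}=\spec\mathfrak S_R^{(i)}\setminus V(p,E)$ to all of $\spec\mathfrak S_R^{(i)}$, where $\mathfrak S_R^{(i)}$ are $(p,E)$-completed prismatic envelopes (the analogues of $\mathfrak S^{(1)}=W[\![u,v]\!]\{\frac{u-v}{E}\}^\wedge_{(p,E)}$ with $t_0$ adjoined). These rings are \emph{not} Noetherian, so the apparatus you invoke --- codimension of $V(p,E^{(i)})$, the $S_2$ property of vector bundles, reflexive extension, "normality of the envelopes" --- is not available off the shelf, and the heuristic that "adjoining a formal variable only increases the ambient dimension without shrinking the codimension" is not an argument in this setting. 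What is actually needed is a statement of the form: for the prisms $(\mathfrak S_R^{(i)},E)$ appearing in the \v{C}ech nerve, restriction of vector bundles to the punctured spectrum is fully faithful (equivalently, in the torsion-free case, $A=A[1/p]\cap A[1/E]$ inside $A[1/pE]$ and the analogous statement for finite projective modules), so that an isomorphism over $U^{(1)}$ extends uniquely to $\spec\mathfrak S_R^{(1)}$ and the cocycle identity propagates by faithfulness over $\mathfrak S_R^{(2)}$. Establishing this for the completed self-products requires a genuine analysis of these envelopes (this is the technical content carried by the Bhatt--Scholze arguments over $\mO_K$ and by \cite[Proposition 1.26]{IKY} in the relative case), and your proposal does not supply it. Until that full-faithfulness/extension statement for the non-Noetherian \v{C}ech terms is proved, the construction of the prismatic $F$-crystal from the condition "$j_*E_U$ is a vector bundle" is incomplete.
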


\begin{proof}
    This is \cite[Proposition 1.26]{IKY}. 
\end{proof}

\begin{definition} \label{def:crys_local_sys} Let $\fX/\mO_K$ be a smooth $p$-adic formal scheme. Let $\fX_{\eta}/K$ be its adic generic fiber and let $\fX_s/k$ denote the special fiber. Let $\L$ be an \'etale $\Z_p$-local system on $\fX_{\eta}$. We say that $\L$ is \textit{crystalline} if there exists an $F$-isocrystal $(\mathbb{E}, \varphi)$ over $\fX_s$, together with a Frobenius equivariant isomorphism of $\mathbb{B}_{\mathrm{crys}}$-vector bundles
    \[ 
     \mathbb{B}_{\mathrm{crys}} (\mathbb{E}) \isom \mathbb{B}_{\mathrm{crys}} \otimes_{\Z_p} \L.
    \] 
    Here $\mathbb{B}_{\mathrm{crys}}(\mathbb{E})$ is the sheaf of $\mathbb{B}_{\mathrm{crys}}$-modules on the pro-\'etale site $\fX_{\eta,\textup{pro\'et}}$ associated to $(\mathbb{E}, \varphi)$. We denote the category of crystalline $\Z_p$-local systems on $\fX_{\eta}$ by $\textup{Loc}^{\textup{crys}}_{\Z_p}(\fX_{\eta})$. 
    \footnote{Note that, \textit{a priori}, the notion of crystallinity of local systems depends on the integral model $\fX$ over $\mathcal{O}_K$. In fact, this notion only only depends on the generic fiber $\fX_{\eta}$ (and independent of the chosen integral model).}
\end{definition}

We shall need the following result in the article, due to Bhatt--Scholze \cite{Bhatt-Scholze-crystal} in the case of $\Spf \mO_K$ and to Du--Liu--Moon--Shimizu \cite{DLMS1}/Guo--Reinecke \cite{GuoReinecke} in general. 

\begin{theorem}[{\cite{Bhatt-Scholze-crystal, DLMS1, GuoReinecke}}]\label{thm:prismatic_F_crystals_and_local_systems}
Let $\fX/\mO_K$ be a smooth $p$-adic formal scheme. There is a natural equivalence of categories 
\[ 
    \Vect^{\textup{an}, \varphi} (\fX_{\Prism}) \isom \textup{Loc}^{\textup{crys}}_{\Z_p}(\fX_{\eta}). 
\] 
In particular, this equivalence is functorial in $\fX$. 
\end{theorem}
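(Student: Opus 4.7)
The theorem is a compilation of results proved in the three cited references, so the plan is to sketch the strategy they follow rather than re-derive it. The approach splits naturally into the absolute case $\fX = \Spf \mO_K$ of \cite{Bhatt-Scholze-crystal} and a relative extension carried out in \cite{DLMS1, GuoReinecke}.

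First I would recall the absolute case. Using the Breuil--Kisin prism $(\mathfrak S, E)$ of Example \ref{Example:prisms}(1), which covers the final object of $(\Spf \mO_K)_{\Prism}$, an analytic prismatic $F$-crystal becomes a $\varphi$-module over $\mathfrak S$ which is a vector bundle on $\Spec \mathfrak S \setminus V(p,E)$, together with descent data along the Cech nerve $\mathfrak S^{(\bullet)}$. After base change along $\mathfrak S \hookrightarrow A_{\inf}(\mO_{\C_p})$, one obtains a Breuil--Kisin--Fargues module whose underlying $\Z_p$-local system on $\Spa(K, \mO_K)$ is the crystalline Galois representation. Conversely, starting from a crystalline representation, Kisin's classification produces the Breuil--Kisin module, and one checks that the analyticity condition matches. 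The punchline is that working on the punctured locus $\Spec \mathfrak S \setminus V(p,E)$, rather than on all of $\Spec \mathfrak S$, is precisely what distinguishes ``crystalline'' from ``prismatic $F$-crystal in the stronger sense'' (compare Lemma \ref{lemma:an_crystal_to_alg_crystal}).

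For a general smooth $p$-adic formal scheme $\fX/\mO_K$, I would work locally and assume $\fX = \Spf R$ is small affine, admitting a framing $\mO_K\langle T_1^{\pm 1}, \dots, T_d^{\pm 1}\rangle \to R$. One then has a Breuil--Kisin prism $(\mathfrak S_R, E)$ generalizing Example \ref{Example:prisms}(2) (with $u$ plus lifts of the $T_i$), which again covers the final object. The functor to crystalline local systems is built by evaluating on a perfectoid pro-\'etale cover $\fX_\infty \to \fX$ (the Faltings tower adjoining $p$-power roots of the $T_i$), producing a perfect prism $(A_\inf(R_\infty), \xi)$; evaluation of an analytic prismatic $F$-crystal on this prism yields a $\varphi$-module on $\Spec A_\inf(R_\infty) \setminus V(p, \xi)$, and descent along the pro-\'etale cover, combined with the $\B_{\cris}$-comparison of Definition \ref{def:crys_local_sys}, yields a $\Z_p$-local system on $\fX_\eta$. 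Fully faithfulness follows from faithfully flat $(p, I)$-descent for vector bundles, since both sides embed into $\varphi$-modules over the perfectoid prism.

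The main obstacle, and the heart of the argument in \cite{DLMS1, GuoReinecke}, is essential surjectivity: given a crystalline $\Z_p$-local system $\L$ on $\fX_\eta$, one must glue from the associated $\B_{\cris}$-module an analytic prismatic $F$-crystal that is defined on the non-perfect prism $(\mathfrak S_R, E)$ and not merely on the perfectoid cover. This requires showing that the $\varphi$-module over the perfectoid prism carries a $\mathfrak S_R$-lattice compatible with descent, which is proved by combining Kedlaya--Liu style slope analysis on the punctured spectrum with the crystalline comparison to control behavior at $V(p, E)$. Once the local equivalence is in place, the functoriality of prismatic sites and the \'etale-local nature of both sides allow one to glue to a global equivalence on an arbitrary smooth $\fX$.
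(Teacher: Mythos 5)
The paper does not prove this theorem at all: it is imported verbatim from the cited references, with only the remark that the absolute case $\Spf\mO_K$ is due to Bhatt--Scholze and the general smooth case to Du--Liu--Moon--Shimizu and Guo--Reinecke, which is exactly the division your sketch follows. Your outline is a fair summary of how those references argue (Breuil--Kisin prism and Kisin-module comparison in the absolute case, local framings, perfectoid covers and descent with essential surjectivity as the hard step in the relative case), so it is consistent with the paper's treatment and there is nothing further to compare.
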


\begin{remark}
    In the setting above, if $\L$ is a crystalline $\Z_p$-local system on $\fX_{\eta}$, then one can recover the $F$-isocrystal over the special fiber $\fX_{s}$ from this equivalence via the crystalline realization (see Diagram (\ref{eq:prismatic_F_crystals_restriction_and_realization})).  
\end{remark}

\begin{example} \label{Example:prisms_revisit} Let us revisit Example \ref{Example:prisms} once again.  
     \begin{enumerate**}
         \item Let $(\mathfrak S, E)$ be the Breuil--Kisin prism, and let $U_0 = \spec (\mathfrak S) \backslash V(p, E)$ denote the open subscheme obtained as the complement of a closed point in  $\spec \mathfrak S$.   
         Theorem \ref{thm:prismatic_F_crystals_and_local_systems} (due to Bhatt--Scholze in this context) says that a $\Z_p$-lattice $\Lambda$ in a crystalline $\textup{Gal}_K$-representation is equivalent to the data of a vector bundle $\mE_0$ over $U_0 = \spec (\mathfrak S) \backslash V(p, E)$, equipped with a Frobenius isomorphism after inverting $E$, as well as the descent data coming from the cosimplicial complex (\ref{eq:simplicial_BK}). Since $\mathfrak S$ is a regular local scheme of dimension 2, $\mE_0$ uniquely extends to a vector bundle over $\spec \mathfrak S$, which still carries the Frobenius isomorphism after inverting $E$, and again satisfies descent. In other words, for $\Spf \mO_K$, the fully faithful embedding $
    \Vect^{ \varphi} ((\Spf \mO_K)_{\Prism}) \rightarrow  
    \Vect^{\textup{an}, \varphi} ((\Spf \mO_K)_{\Prism}) $ is an equivalence. Moreover, via Theorem \ref{thm:prismatic_F_crystals_and_local_systems} and  this equivalence, a $\Z_p$-lattice $\Lambda$ in a crystalline representation is equivalent to a \textit{Breuil--Kisin module} $(\mathfrak M, \varphi_{\mathfrak M})$ that satisfies descent along the cosimplicial complex (\ref{eq:simplicial_BK}), that is, equipped with a descent isomorphism after pulling back to $\mathfrak S^{(1)}$ which satisfies a cocycle condition over $\mathfrak S^{(2)}$.
   \item Let $R =  (\mO_K [t]^{\wedge}_{p})$ be as in Example \ref{Example:prisms} (2) and let $\fX = \Spf R$, so $\fX_{\eta} = \sD = \textup{Spa} (R[1/p], R)$ is the closed unit disc over $K$. In this case, Theorem \ref{thm:prismatic_F_crystals_and_local_systems} tells us that crystalline $\Z_p$-local system on $\sD$ is equivalent to the category of analytic prismatic $F$-crystals over $\fX$. In particular, it gives rise to a vector bundle $E_U$ over $U = \spec (\mathfrak S_R) \backslash V(p, E(u))$ equipped with a Frobenius $\varphi_E$ (by evaluating the analytic prismatic $F$-crystal on the prism $(\mathfrak S_R, E(u))$). 
   \end{enumerate**}
\end{example}

\section{From $F$-isocrystals to $F$-crystals}\label{Sec:pointwiseconstancy}

Let $\mathbb L$ be a crystalline $\Z_p$-local system on the closed unit disc $\sD$, and let $\mathbb E$ be the $F$-isocrystal over the special fiber of $\sD$ (which is a copy of $\A^1_{k}$) attached to $\mathbb L$. Let us also recall that, if $V$ is a crystalline $\Q_p$-representation of the Galois group $\textup{Gal}_K$ with nonnegative Hodge--Tate weights, and $\Lambda \subset V$ is a $\textup{Gal}_K$-stable $\Z_p$-lattice, then one can attach to $\Lambda$ a Frobenius module 
\[\mathbb D_{\textup{crys}} (\Lambda) = (M, \varphi_M),\] which consists of a finite free $W(k)$-module $M$ together with a Frobenius map $\varphi_M: M \rightarrow M$ that becomes an isomorphism upon inverting $p$. This can be achieved by considering the Breuil--Kisin module $(\mathfrak M, \varphi_{\mathfrak M})$ and base changing along the map $\mathfrak S = W(k) [\![u]\!] \rightarrow W(k)$ sending $u \mapsto 0$. We may regard $\mathbb D_{\textup{crys}} (\Lambda)$ as 
an $F$-crystal over $\spec k$ (see Example \ref{Example:prisms_revisit} and also see Diagram (\ref{eq:prismatic_F_crystals_restriction_and_realization})).  The goal of this section is to show the following. 

\begin{theorem} \label{prop:existence_of_crystal}
   Assume the above setup.  Up to replacing $\sD$ by a smaller 
   closed disc in $\sD$ if necessary, there exists an $F$-crystal $\mathbb D$ over $\A^1$ with $\mathbb D [1/p] \cong \mathbb E$, that is compatible with the crystalline $\Z_p$-local system  $\mathbb L$ in the following sense: for every finite extension $L/K$ and every classical $L$-point $x \in \sD$ which specializes to a closed point $\overline{x} \in \A^1$,  there is an isomorphism $\mathbb D|_{\overline{x}} \cong \mathbb D_{\textup{crys}} (\mathbb L|_{x})$  of $F$-crystals over $\overline{x}$. Consequently, up to replacing $\sD$ by a smaller disc, the isomorphism class of the $F$-crystal $\mathbb{D}_{\textup{cris}}(\bL_x)$ is independent of the classical point $x\in \sD$.
\end{theorem}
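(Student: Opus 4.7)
The plan is to leverage the prismatic–crystalline dictionary. By Theorem \ref{thm:prismatic_F_crystals_and_local_systems}, the crystalline local system $\mathbb{L}$ on $\sD = \fX_\eta$ (where $\fX = \Spf R$) corresponds to an analytic prismatic $F$-crystal $(\mathcal{E}, \varphi_{\mathcal{E}}) \in \Vect^{\mathrm{an}, \varphi}(\fX_{\Prism})$. I would upgrade this to a genuine prismatic $F$-crystal on $\fX$, possibly after shrinking $\sD$, and then obtain $\mathbb{D}$ by taking the crystalline realization along the right-hand column of Diagram \eqref{eq:prismatic_F_crystals_restriction_and_realization}.

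By Lemma \ref{lemma:an_crystal_to_alg_crystal}, the upgrade is equivalent to showing that the pushforward $j_* E_U$ of the vector bundle obtained by evaluating $(\mathcal{E}, \varphi_{\mathcal{E}})$ on the prism $(\mathfrak{S}_R, E(u))$ is a vector bundle on $\Spec (\mathfrak{S}_R)$. Since $\mathfrak{S}_R = W(k)[t_0][\![u]\!]$ is a $3$-dimensional regular Noetherian ring and $V(p, E(u))_{\mathrm{red}} = V(p, u) \cong \Spec k[t_0]$ has codimension $2$, the sheaf $j_* E_U$ is automatically reflexive, and at the generic point of $V(p, u)$ the local ring of $\mathfrak{S}_R$ is $2$-dimensional regular, so $j_* E_U$ is already locally free there. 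The heart of the proof, and the main obstacle, is to establish local freeness at the closed points of $V(p, u)$, where the local rings are $3$-dimensional regular and reflexivity alone does not suffice. My strategy would be to exploit the Frobenius structure $\varphi_U$ together with the $\varphi$-descent data---noting that $\varphi: \mathfrak{S}_R \to \mathfrak{S}_R$ is faithfully flat and stabilizes $V(p, u)$---combined with the freedom to shrink $\sD$ to a smaller closed subdisc so that the restricted $(\mathcal{E}, \varphi_{\mathcal{E}})$ has a sufficiently controlled form to invoke refinements of Theorem \ref{thm:prismatic_F_crystals_and_local_systems} due to \cite{IKY} and \cite{DLMS1, GuoReinecke}, which in suitable situations identify analytic prismatic $F$-crystals with genuine prismatic $F$-crystals.

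Once $(\mathcal{E}, \varphi_{\mathcal{E}})$ is promoted to an object of $\Vect^\varphi(\fX_{\Prism})$, its crystalline realization gives the desired $F$-crystal $\mathbb{D}$ on $\fX_s = \A^1_k$, and $\mathbb{D}[1/p] \cong \mathbb{E}$ follows from the $\mathbb{B}_{\mathrm{crys}}$-comparison characterizing crystallinity in Definition \ref{def:crys_local_sys}. For the fiberwise compatibility at a classical $L$-point $x \in \sD$ specializing to $\overline{x} \in \A^1$, the map $\Spf \cO_L \to \fX$ induced by $x$ pulls $(\mathcal{E}, \varphi_{\mathcal{E}})$ back, by functoriality of Theorem \ref{thm:prismatic_F_crystals_and_local_systems}, to the analytic (hence genuine, by Example \ref{Example:prisms_revisit}(1)) prismatic $F$-crystal on $\Spf \cO_L$ corresponding to $\mathbb{L}|_x$---namely its Breuil--Kisin module $\mathfrak{M}_x$. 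Specializing via $u \mapsto 0$ identifies both $\mathbb{D}|_{\overline{x}}$ and $\mathbb{D}_{\mathrm{crys}}(\mathbb{L}|_x)$ with $\mathfrak{M}_x \otimes_{\mathfrak{S}} W(k_L)$, yielding the desired isomorphism of $F$-crystals over $\overline{x}$.
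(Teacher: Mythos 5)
Your reduction is the right one, and it matches the paper's framework: pass to the analytic prismatic $F$-crystal via \Cref{thm:prismatic_F_crystals_and_local_systems}, and observe via \Cref{lemma:an_crystal_to_alg_crystal} that everything hinges on $j_*E_U$ being a vector bundle over $\Spec \mathfrak S_R$, the only problematic points being the closed points of $V(p,u)$ where reflexivity of $j_*E_U$ over the $3$-dimensional regular ring does not imply local freeness. The compatibility statements at the end (the identification $\mathbb D[1/p]\cong\mathbb E$ and the fiberwise comparison through the Breuil--Kisin module at each classical point, using functoriality and Example \ref{Example:prisms_revisit}(1)) are also fine and are exactly how the paper concludes.

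The gap is at the central step, which you defer to ``exploit the Frobenius structure \dots combined with the freedom to shrink $\sD$ \dots to invoke refinements of \Cref{thm:prismatic_F_crystals_and_local_systems}.'' This is not an argument: no general refinement identifying analytic prismatic $F$-crystals with genuine ones over $\Spf(\mathcal O_K[t])^\wedge_p$ is available --- the failure of such an identification is precisely why the theorem is stated only up to shrinking the disc --- and you give no mechanism by which Frobenius or shrinking would force local freeness of $j_*E_U$ at the bad closed points. The paper resolves this by a purely commutative-algebraic/geometric argument in which Frobenius plays no role: \Cref{prop:blow-up to resolve} shows, by descending induction on the length of $\sExt^1_{\mathcal O_{\mathcal T}[\![u]\!]}((j_*E_U)^\vee,\mathcal O_{\mathcal T}[\![u]\!])$ (the strict decrease being \Cref{lem:strictly decreasing}), that after finitely many admissible blow-ups of the two-dimensional base $\fX_0=\Spf R_0$ at closed points the (double dual of the) pullback becomes locally free; the second key observation is that these blow-ups can be arranged, after shrinking, to occur only at the origin, so that the whole modification is realized by $t_0\mapsto p^n t_n$, i.e.\ exactly by replacing $\sD$ with the subdisc $\{|z|\le 1/p^n\}$. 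Without this (or some substitute for it), your proof of the existence of the genuine prismatic $F$-crystal, and hence of $\mathbb D$, does not go through.
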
   


\subsection{Locally free extensions}
  Let $\mathcal T$ be an admissible $p$-adic formal scheme over $\Spf W$, and consider the sheaf of rings $\cO_{\mathcal T}[\![u]\!]$.\footnote{In the context of \Cref{Example:prisms}(2), the reader may take $\mathcal T = \Spf R_0$ as a working example in this subsection.} We say $\mathcal T$ is regular if every local ring is regular.  In this section we prove the following:

\begin{proposition}\label{prop:blow-up to resolve}
    Let  $\mathcal T$ be a regular 2-dimensional $p$-adic formal scheme over $\Spf W$ and $E$ a finitely generated $\cO_{\mathcal T}[\![u]\!]$-module.  Then there is a sequence \[
    \mathcal T'=\mathcal T_n\to\cdots \to\mathcal T_1=\mathcal T\] of formal admissible blow-ups at closed points, such that $(f[\![u]\!]^*E)^{\vee\vee}$ is locally free as an $\mathcal O_{\mathcal T} [\![u]\!]$-module, where $f: \mathcal T' \rightarrow \mathcal T$ denotes the composition of sequence of maps above.  
\end{proposition}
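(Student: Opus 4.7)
The plan is to resolve the non-locally-free locus of the reflexive sheaf $E^{\vee\vee}$ by a sequence of blow-ups, working locally on $\mathcal{T}$. First, I would reduce to the case $\mathcal{T} = \Spf A$ with $A$ a complete regular local $W$-algebra of dimension $2$, so that $R := A[\![u]\!]$ is a regular local ring of dimension $3$. Setting $M := E^{\vee\vee}$, I would describe $M$ explicitly as a kernel of a map between free modules: choosing a presentation $R^{n} \xrightarrow{\phi} R^{m} \to E^{\vee} \to 0$ and dualizing yields $M \cong \ker(\phi^{T} : R^{m} \to R^{n})$, and the non-locally-free locus of $M$ is precisely the closed subscheme of $\Spec R$ where $\phi^{T}$ fails to attain its generic rank, cut out by the appropriate Fitting ideal.

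Next, I would invoke that any reflexive sheaf on a regular Noetherian scheme is locally free outside a closed subset of codimension $\geq 3$ (by Auslander--Buchsbaum, since a reflexive module over a regular local ring of dimension $\leq 2$ is automatically free). Since $\dim R = 3$, this non-locally-free locus is either empty (and we are done) or equal to the unique closed point of $\Spec R$, which lies over the closed point $t \in \mathcal{T}$. Assuming the non-empty case, I would blow up $t$ in $\mathcal{T}$ to obtain $\mathcal{T}_{2} \to \mathcal{T}$; the corresponding blow-up of $\Spec R$ is along the regular sequence generating $\mathfrak{m}_{t} \cdot R$, with exceptional locus a $\mathbb{P}^{1}$-fibration over $\Spf k_{t}[\![u]\!]$ inside a regular $3$-dimensional ambient space. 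Pulling back $E$ and re-taking the double dual produces a reflexive sheaf on $\mathcal{T}_{2}[\![u]\!]$ whose non-locally-free locus is again a finite set of closed points, each lying over a closed point of $\mathcal{T}_{2}$, and one iterates.

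For termination I would appeal to a flattening-by-blow-ups principle of Raynaud--Gruson type applied to the cokernel of $\phi^{T}$: after finitely many admissible blow-ups of the base at closed points, the cokernel becomes flat, hence locally free, and so $M = \ker(\phi^{T})$ becomes locally free. The crucial structural observation is that the non-locally-free locus is always supported over closed points of the current base, so the centers of the required blow-ups descend from ideals of $R_{i}$ to ideals of $A_{i}$ supported at closed points, which is what lets us realize them as formal admissible blow-ups of $\mathcal{T}_i$ at closed points. I expect the main obstacle to be establishing termination with a concrete strictly decreasing invariant in this formal relative setting: a natural candidate is the colength of a suitably chosen Fitting ideal of the cokernel, and proving that it strictly decreases under each blow-up likely requires reducing modulo powers of $u$ (using completeness of $R$ in the $u$-direction) to a standard $2$-dimensional flattening statement, together with a lift-back argument.
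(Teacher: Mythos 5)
Your overall skeleton matches the paper's: work stalkwise so that $\cO_{\mathcal T,x}[\![u]\!]$ is a $3$-dimensional regular local ring, observe that the reflexive sheaf $(f[\![u]\!]^*E)^{\vee\vee}$ fails to be locally free only on a finite-length locus (reflexives over regular local rings of dimension $\leq 2$ are free), note that a blow-up of $\mathcal T$ at a closed point $x$ induces on $\cO_{\mathcal T,x}[\![u]\!]$ the blow-up along the regular curve $V(\mathfrak m_x)$, and then iterate while a numerical invariant drops. But the step you yourself flag as the main obstacle --- termination --- is a genuine gap, and the route you propose for it does not work as stated. Raynaud--Gruson flattening is a statement about making a finitely presented module flat \emph{over the base} by admissible blow-ups of that base, with centers that need not be closed points; here the module lives over $\cO_{\mathcal T}[\![u]\!]$ (not of finite type over $\mathcal T$), the allowed centers are only closed points of $\mathcal T$ (equivalently, the curves $V(\mathfrak m_x)$ upstairs), and what must be achieved is local freeness of a \emph{double dual after pullback} over $\cO_{\mathcal T'}[\![u]\!]$, not flatness of a cokernel over $\mathcal T'$. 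Moreover, even granting a flattened cokernel, the inference ``so $M=\ker(\phi^T)$ becomes locally free'' conflates $\ker(f^*\phi^T)$, $f^*\ker(\phi^T)$ and $(f^*E)^{\vee\vee}$; pullback commutes with none of duals, kernels, or double duals, and controlling this discrepancy is where the real work lies.

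The paper supplies exactly the missing ingredient: the invariant is $\ell\bigl(\Ext^1_{\cO}(E^\vee,\cO)\bigr)$, and its Lemma on strict decrease shows that for a reflexive non-free $E$ over a $3$-dimensional regular local ring, blowing up along \emph{any} regular curve strictly decreases this length. The proof is a nontrivial homological computation: a two-term free resolution of $E$ (using $\Ext^{>1}(E,\cO)=0$ for reflexives, via the double-dual spectral sequence and local duality), vanishing of $L^{i}f^*$ for $i<-1$ on punctual modules together with nonvanishing of $H^0(\sExt^2_{\cO_X}(L^{-1}f^*M,\cO_X))$ (Koszul resolution of $k(x)$ and induction on length), the exact sequence $0\to f^*(E^\vee)\to (f^*E)^\vee\to L^{-1}f^*Q\to 0$ with $Q=\Ext^1_\cO(E,\cO)$, and a comparison $\ell(\sExt^1_{\cO_X}(f^*(E^\vee),\cO_X))\leq \ell(\Ext^1_\cO(E^\vee,\cO))$ coming from $Rf_*\cO_X=\cO$. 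Your proposed substitute (colength of a Fitting ideal of the cokernel, controlled by reduction modulo powers of $u$ plus a ``lift-back'') is left entirely unverified, and it is precisely the derived correction term $L^{-1}f^*Q$ above that such a naive invariant would have to account for; without an argument that your invariant strictly decreases under these constrained blow-ups, the induction does not close.
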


We begin with the following observation.

\begin{lemma}\label{lem:vanishing}Let $\cO$ be a 3-dimensional regular local ring and $E$ a finitely generated reflexive $\cO$-module.  Then $\Ext^i_\cO(E,\cO)=0$ for $i>1$.
\end{lemma}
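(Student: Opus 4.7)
The plan is to deduce the vanishing from a bound on the projective dimension of $E$ via the Auslander--Buchsbaum formula. Since $\cO$ is a regular local ring, hence a domain, a reflexive module $E$ is automatically torsion-free and so every associated prime of $E$ is the zero ideal. In particular, if $E \neq 0$, then $\dim E = \dim \cO = 3$.

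Next, I would invoke the standard characterization: over a Gorenstein (in particular regular) local ring, a finitely generated module is reflexive if and only if it satisfies Serre's condition $S_2$. Applied at the maximal ideal $\mathfrak m$, this yields
\[ \mathrm{depth}_{\mathfrak m}(E) \geq \min(2, \dim E) = 2. \]

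Now apply the Auslander--Buchsbaum formula. Since $\cO$ is regular local of dimension $3$, any finitely generated $\cO$-module has finite projective dimension, and
\[ \mathrm{pd}_{\cO}(E) + \mathrm{depth}_{\mathfrak m}(E) = \mathrm{depth}_{\mathfrak m}(\cO) = 3. \]
Combined with the bound above, this gives $\mathrm{pd}_{\cO}(E) \leq 1$. Hence $E$ admits a free resolution of length at most $1$, and therefore $\mathrm{Ext}^i_{\cO}(E,N) = 0$ for all $i > 1$ and every $\cO$-module $N$, in particular for $N = \cO$.

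There is no real obstacle here beyond citing the two standard inputs (reflexive $\Leftrightarrow S_2$ over Gorenstein local rings, and Auslander--Buchsbaum); both are in Bruns--Herzog or Matsumura, and together they immediately give the claim.
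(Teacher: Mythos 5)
Your proof is correct, but it takes a genuinely different route from the paper's. The paper deduces the vanishing from the double-dual spectral sequence $R\Hom_\cO(R\Hom_\cO(E,\cO),\cO)\simeq E$ (reflexivity identifying the abutment) together with local duality on the $3$-dimensional regular local ring, which kills the terms $\Ext^3_\cO(\Ext^i_\cO(E,\cO),\cO)$ for $i>1$ and hence $\Ext^i_\cO(E,\cO)$ itself; you instead bound the projective dimension: reflexivity forces $\operatorname{depth}(E)\geq 2$, so Auslander--Buchsbaum gives $\operatorname{pd}_\cO(E)\leq 1$ and the vanishing of $\Ext^i_\cO(E,N)$ for every $N$ and $i>1$, a slightly stronger conclusion obtained by more elementary means (the two are in fact equivalent, since over a regular local ring $\operatorname{pd}(E)=\sup\{i:\Ext^i_\cO(E,\cO)\neq 0\}$); the paper's phrasing keeps the argument in the same Ext-and-duality language reused in the subsequent blow-up lemma. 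One small caution: the biconditional ``reflexive $\Leftrightarrow S_2$'' is convention-sensitive --- with the bound $\operatorname{depth}(E)\geq\min(2,\dim E)$ that you actually invoke, $S_2$ does not imply reflexivity (e.g.\ $\cO/x\cO$ is Cohen--Macaulay, hence satisfies that condition, but is not reflexive) --- however you only use the true direction, namely that a reflexive module has depth $\geq 2$ at the maximal ideal (equivalently, it is a second syzygy and $\operatorname{depth}\cO\geq 2$), so the argument stands as written.
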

\begin{proof}
 Every term in the double dual spectral sequence \[R\Hom_\cO(R\Hom_\cO(E,\cO),\cO)=E\]
 vanishes except $E^{\vee\vee}$, $\Ext^1_\cO(E^\vee,\cO)$, and $\Ext^3_\cO(\Ext^1_\cO(E,\cO),\cO)$.  In particular, this implies that $\Ext^3_\cO(\Ext^i_\cO(E,\cO),\cO)=0$ for $i>1$, so $\Ext^i_\cO(E,\cO)=0$ by local duality.
\end{proof}
The main step of \Cref{prop:blow-up to resolve} is the following:
\begin{lemma}\label{lem:strictly decreasing}Let $\cO$ be a 3-dimensional regular local ring and $E$ a finitely generated reflexive $\cO$-module which is not free.  Let $f:X\to\Spec\cO$ be the blow-up along any regular curve.  Then we have 
\[\ell(\sExt^1_{\cO_X}((f^*E)^\vee,\cO_X))<\ell(\Ext^1_\cO(E^\vee,\cO)),\]
where $\ell $ denotes the length of ($\mathcal O$-)modules. 
\end{lemma}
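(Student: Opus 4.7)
The plan is to reduce the comparison to a length estimate on the finite‐length module $T := \Ext^1(E,\cO)$ and its pullback, and then extract the strict decrease from the geometry of the blow‐up.

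First, I would exploit that $E$ is reflexive over the $3$‐dimensional regular local ring $\cO$: Auslander--Buchsbaum gives $\mathrm{pd}_{\cO}(E)\le 1$, and $E$ (and hence $E^\vee$) is locally free on the punctured spectrum of $\cO$. Fix a free resolution $0\to P_1 \xrightarrow{A} P_0 \to E \to 0$; dualising produces a four‐term exact sequence $0 \to E^\vee \to P_0^\vee \to P_1^\vee \to T \to 0$ in which $T$ has finite length and is non‐zero (since $E$ is not free). Breaking into two short exact sequences and applying $\Ext^\bullet(-,\cO)$, together with \Cref{lem:vanishing} and the reflexivity $E^{\vee\vee}=E$, identifies
\[
\Ext^1(E^\vee,\cO) \;\cong\; \Ext^3(T,\cO) \;\cong\; D(T),
\]
where $D$ is Matlis duality; in particular $\ell(\Ext^1(E^\vee,\cO)) = \ell(T)$. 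Since $f^*$ is right exact and commutes with cokernels, the same dualisation on $X$ yields an exact four‐term sequence $0\to (f^*E)^\vee \to f^*P_0^\vee \to f^*P_1^\vee \to f^*T \to 0$, and the identical dimension shift gives
\[
\sExt^1((f^*E)^\vee,\cO_X) \;\cong\; \sExt^3(f^*T,\cO_X).
\]

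Second, local duality on the smooth $3$‐fold $X$ identifies $\sExt^3(f^*T,\cO_X)$, pointwise, with the Matlis dual of the maximal finite‐length subsheaf $F := \Gamma_{\{\mathrm{pts}\}}(f^*T) \subseteq f^*T$; in particular $\ell(\sExt^3(f^*T,\cO_X)) = \ell(F)$. Set $Q := f^*T/F$, which is pure of dimension $1$ and supported on the exceptional fibre $f^{-1}(\mathfrak m)\cong \mathbb{P}^1_k$. Because $f$ is the blow‐up of a regular codimension‐$2$ centre in a regular scheme, $Rf_*\cO_X \cong \cO_Y$, so the projection formula gives $Rf_*(f^*T) \cong T$. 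The long exact sequence of $Rf_*$ applied to $0\to F \to f^*T \to Q \to 0$, together with $R^1f_* F=0$ (since $F$ is zero‐dimensional), produces the short exact sequence $0 \to f_*F \to T \to f_*Q \to 0$. As $f_*$ preserves lengths of zero‐dimensional sheaves, $\ell(F) \le \ell(f_*F) = \ell(T) - \ell(f_*Q)$, so the strict inequality $\ell(F)<\ell(T)$ reduces to showing $f_*Q \ne 0$.

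Finally, I would produce this non‐vanishing from the existence of a surjection $T \twoheadrightarrow T/\mathfrak m T \twoheadrightarrow k$ (which exists since $T\ne 0$). Pulling back by the right exact functor $f^*$ gives a surjection $f^*T \twoheadrightarrow f^*k = \cO_{\mathbb{P}^1_k}$. Because $F$ has zero‐dimensional support while $\cO_{\mathbb{P}^1_k}$ is pure of dimension $1$, the restriction $F \to \cO_{\mathbb{P}^1_k}$ vanishes, so this surjection factors as $f^*T \twoheadrightarrow Q \twoheadrightarrow \cO_{\mathbb{P}^1_k}$. Pushing forward and using $Rf_*\cO_X \cong \cO_Y$, the composition $T \to f_*Q \to f_*\cO_{\mathbb{P}^1_k} = k$ recovers the original surjection $T \twoheadrightarrow k$, so $f_*Q \twoheadrightarrow k$ is surjective and in particular $f_*Q \ne 0$. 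Chaining this with the preceding steps gives the required strict inequality.

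The main obstacle is the technical reduction in the first paragraph: identifying both $\Ext^1(E^\vee,\cO)$ and $\sExt^1((f^*E)^\vee,\cO_X)$ as $\sExt^3$‐groups of the single finite‐length module $T$ and its pullback via parallel dimension shifts, compatibly with the (non‐flat) pullback along $f$. Once this bookkeeping is in place, the geometric input—namely that $f^*k$ is the reduced structure sheaf of $\mathbb{P}^1_k \subset X$ and that the pullback/pushforward of the natural surjection $T\twoheadrightarrow k$ remains surjective—does the rest.
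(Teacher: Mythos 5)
Your proposal takes a genuinely different route from the paper's, and most of it is sound. The paper keeps $E^\vee$ in play: it builds the exact sequence $0\to f^*(E^\vee)\to (f^*E)^\vee\to L^{-1}f^*Q\to 0$ (its $Q=\Ext^1_\cO(E,\cO)$ is your $T$), compares $\Ext^1_\cO(E^\vee,\cO)$ with $H^0(\sExt^1_{\cO_X}(f^*(E^\vee),\cO_X))$ via $Rf_*R\sHom$ and $Rf_*\cO_X=\cO$, and gets the strict drop from the nonvanishing of $H^0(\sExt^2_{\cO_X}(L^{-1}f^*Q,\cO_X))$, which ultimately comes from $L^{-1}f^*k\cong\cO_C(-1)$ on the exceptional $\P^1$. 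You instead shift both sides to degree $3$: the identifications $\Ext^1_\cO(E^\vee,\cO)\cong\Ext^3_\cO(T,\cO)$ and $\sExt^1_{\cO_X}((f^*E)^\vee,\cO_X)\cong\sExt^3_{\cO_X}(f^*T,\cO_X)$ are both legitimate (the first uses the free first syzygy, as in \Cref{lem:vanishing}; the second uses that the pulled-back four-term sequence is exact, the only delicate point being that $\ker(f^*A^*)=(f^*E)^\vee$, which follows by dualizing $f^*P_1\to f^*P_0\to f^*E\to 0$); you then convert to lengths by stalkwise local duality and extract strictness from $f_*Q\neq 0$ via the surjection $f^*T\to f^*k=\cO_{\P^1}$ and $f_*\cO_{\P^1}=k$, using naturality of the unit. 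The two sources of strictness are cousins, but your bookkeeping through the maximal finite-length subsheaf of $f^*T$ is genuinely different from the paper's and arguably more transparent.

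There is, however, one genuine flaw: the projection formula does not give $Rf_*(f^*T)\cong T$. It gives $Rf_*(Lf^*T)\cong T\otimes^{L}Rf_*\cO_X\cong T$, and here $f^*T\neq Lf^*T$ precisely because $T$ has finite length; indeed $L^{-1}f^*k\cong\cO_C(-1)$ (this is Step 2 of the paper's proof), so $L^{-1}f^*T$ is nonzero in general. Consequently the short exact sequence $0\to f_*F\to T\to f_*Q\to 0$ is not justified as written. Fortunately your argument only needs $\ell\big(f_*(f^*T)\big)\le\ell(T)$, i.e.\ surjectivity of the unit $T\to f_*f^*T$: apply $Rf_*$ to the triangle $\tau_{\le-1}Lf^*T\to Lf^*T\to f^*T$ and use that the fibers of $f$ have dimension at most $1$, so $R^{\ge 2}f_*$ vanishes on sheaves supported on the exceptional fiber; this kills the obstruction in degree $1$, so the unit is surjective (though it need not be injective). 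With that repair the chain becomes $\ell(F)=\ell(f_*F)=\ell(f_*f^*T)-\ell(f_*Q)\le\ell(T)-\ell(f_*Q)$, and your unit-naturality argument for $f_*Q\neq 0$ (which does not rely on the false isomorphism) is correct, so the strict inequality follows.
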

\begin{proof}
\noindent\emph{Step 1.}  $E$ has a presentation of the form
\begin{equation}\label{eq:present}\begin{tikzcd}0\ar[r]&\cO^m\ar[r,"A"]&\cO^n\ar[r]& E\ar[r]& 0\end{tikzcd}.\notag\end{equation}
\begin{proof}
    Consider any presentation
\begin{equation}\label{eq:present2}\begin{tikzcd}\cO^m\ar[r,"A"]&\cO^n\ar[r]& E\ar[r]& 0\end{tikzcd}.\notag\end{equation}
We claim that $N=\img(A)$ is free.  Indeed, applying $RHom_\cO(-,\cO)$ to the sequence
\[\begin{tikzcd}0\ar[r]&N\ar[r]&\cO^n\ar[r]& E\ar[r]& 0\end{tikzcd}\]
and using \Cref{lem:vanishing}, we see that $\Ext^i_\cO(N,\cO)=0$ for $i>0$.
\end{proof}

\noindent\emph{Step 2.}  Let $M$ be any nonzero finitely-generated $\cO$-module supported on (some thickening of) the closed point $x$ of $\Spec\cO$.  Then $L^if^*M=0$ for $i<-1$ and $H^0(\sExt_{\cO_X}^2(L^{-1}f^*M,\cO_X))\neq 0$.
\begin{proof}First observe that the vanishing claim is true for $M=k(x)$ using the Koszul resolution.  In general we may take a short exact sequence 
\[\begin{tikzcd}
    0\ar[r]&M'\ar[r] &M\ar[r]&k(x)\ar[r]&0.
\end{tikzcd}\]
and the vanishing follows from the above observation by induction on $\ell(M)$.

For the second claim, again observe that the claim is true for $M=k(x)$ since $L^{-1}f^*k(x)=\cO_C(-1)$ where $C=f^{-1}(x)\cong \P^1_{k(x)}$ and $\sExt^2_{\cO_X}(\cO_C(-1),\cO_X)=\cO_C$.  In general, using the same sequence we have an exact sequence
\[
\begin{tikzcd}
0\ar[r]&L^{-1}f^*M'\ar[r]&L^{-1}f^*M \ar[r]& \cO_C(-1)
\end{tikzcd}
\]
and the image of the rightmost map is therefore either 0 or $\cO_C(-a)$ for some $a\geq 1$.  Thus by induction we may assume we are in the latter case.  But then we have an inclusion
\[\begin{tikzcd}
    0\ar[r]&\cO_C(a-1)\cong\sExt^2_{\cO_X}(\cO_C(-a),\cO_X)\ar[r]& \sExt^2_{\cO_X}(L^{-1}f^*M,\cO_X)
\end{tikzcd}\]
whence the claim.
\end{proof}
Taking the dual of the presentation from Step 1 we have
\[\begin{tikzcd}
    0\ar[r]&E^\vee\ar[r]&\cO^n\ar[r,"A^*"]&\cO^m\ar[r]&\Ext^1_\cO(E,\cO)\ar[r]&0.
\end{tikzcd}\]
Let $F=\img(A^*)$ and $Q=\Ext^1_\cO(E,\cO)$.
\vskip1em
\noindent\emph{Step 3.}  We have $L^if^*F=0$ and $L^if^*(E^\vee)=0$ for $i<0$.
\begin{proof}Pulling back the sequence
\[\begin{tikzcd}
    0\ar[r]&F\ar[r]&\cO^m\ar[r]& Q\ar[r]&0
\end{tikzcd}\]
and using the vanishing in Step 2 implies the first claim.  Pulling back 
\[\begin{tikzcd}
    0\ar[r]&E^\vee\ar[r]&\cO^n\ar[r]& F\ar[r]&0
\end{tikzcd}\]
and using the first claim implies the second.
\end{proof}
\noindent\emph{Step 4.}    There is a natural short exact sequence
\[\begin{tikzcd}
    0\ar[r]&f^*(E^\vee)\ar[r] &(f^*E)^\vee\ar[r]&L^{-1}f^*Q\ar[r]&0.
\end{tikzcd}\]
\begin{proof}Since $\cO_X$ is torsion-free, pulling back the presentation from Step 1 we have an exact sequence
\[\begin{tikzcd}
0\ar[r]&\cO_X^m\ar[r,"f^*A"]&\cO_X^n\ar[r]&f^*E\ar[r]&0
\end{tikzcd}\]
and therefore an exact seqeunce
\[\begin{tikzcd}
0\ar[r]&(f^*E)^\vee\ar[r]&\cO_X^n\ar[r,"f^*A^*"]&\cO_X^m.
\end{tikzcd}\]
Now applying the previous step to the diagram with exact diagonals
\[\begin{tikzcd}[/tikz/cells={/tikz/nodes={shape=asymmetrical
  rectangle,text width=1.5cm,text height=2ex,text depth=0.3ex,align=center}},row sep=tiny, column sep=tiny]
{}&0\ar[dr]&{}&{}&{}&{}&{}\\
{}&{}&L^{-1}f^*Q\ar[dr]&{}&0&{}&{}\\
    {}&{}&{}&f^*F\ar[ur]\ar[dr]&{}&{}&{}\\
    {}&{}&\cO_X^n\ar[ur]\ar[rr,"f^*A^*",swap]&{}&\cO_X^m\ar[rd]&{}&{}\\
   {}& f^*(E^\vee)\ar[ur]&{}&{}&{}&f^*Q\ar[rd]&{}\\
    L^{-1}f^*F\ar[ur]&{}&{}&{}&{}&{}&0
\end{tikzcd}\]
yields the claim.
\end{proof}

\noindent\emph{Step 5.}  There is a natural short exact sequence
\[\begin{tikzcd}
    0\ar[r]&H^1(f^*(E^\vee)^\vee)\ar[r] &\Ext^1_{\cO}(E^\vee,\cO)\ar[r]&H^0(\sExt_{\cO_X}^1(f^*(E^\vee),\cO_X))\ar[r]&0.
\end{tikzcd}\]
In particular, $\ell(\sExt_{\cO_X}^1(f^*(E^\vee),\cO_X))\leq \ell(\Ext^1_\cO(E^\vee,\cO))$.
\begin{proof}Use
\[Rf_*(R\sHom_{\cO_X}(Lf^*(E^\vee),\cO_X))=R\Hom_\cO(E^\vee,Rf_*\cO_X)\]
together with $Rf_*\cO_X=\cO$ and the vanishing of $L^{i}f^*(E^\vee)$ for $i<0$ from Step 3.
\end{proof}

\noindent\emph{Step 6.}  \[\ell(\sExt^1_{\cO_X}((f^*E)^\vee,\cO_X))\leq \ell(\Ext^1_\cO(E^\vee,\cO))-\ell(H^0(\sExt^2_{\cO_X}(L^{-1}f^*Q,\cO_X))).\]
\begin{proof}
    Applying $R\Hom_{\cO_X}(-,\cO_X)$ to the sequence from Step 4 and using the vanishing from \Cref{lem:vanishing} we have an exact sequence
\[\begin{tikzcd}
    0\ar[r]&\sExt^1_{\cO_X}((f^*E)^\vee,\cO_X)\ar[r]& \sExt^1_{\cO_X}(f^*(E^\vee),\cO_X)\ar[r]& \sExt^2_{\cO_X}(L^{-1}f^*Q,\cO_X)\ar[r]&0.
\end{tikzcd}\]
The first term has dimension 0, so the sequence remains exact on taking global sections
\[\begin{tikzcd}[sep=small]
    0\ar[r]&H^0(\sExt^1_{\cO_X}((f^*E)^\vee,\cO_X))\ar[r]& H^0(\sExt^1_{\cO_X}(f^*(E^\vee),\cO_X))\ar[r]& H^0(\sExt^2_{\cO_X}(L^{-1}f^*Q,\cO_X))\ar[r]&0.
\end{tikzcd}\]
Combining this with Step 5, we have the claim.
\end{proof}
\noindent\emph{Step 7.}  Conclusion of proof.
\vskip1em
By the assumption on $E$ and \Cref{lem:vanishing} we have $Q\neq 0$.  But then by the nonvanishing in Step 2 we have $0\neq \ell(H^0(\sExt^2_{\cO_X}(L^{-1}f^*Q,\cO_X)))$, so the claim follows from Step 6.
\end{proof}

\begin{proof}[Proof of \Cref{prop:blow-up to resolve}]
 Note it is equivalent to show $(f[\![u]\!]^*E)^\vee$ is locally free.  Since $E^\vee$ is a reflexive $\mathcal O_{\mathcal T} [\![u]\!]$-module, we have \[ \sExt^i_{\mathcal O_{\mathcal T} [\![u]\!]}(E^\vee,\mathcal O_{\mathcal T} [\![u]\!])=0\] for $i>1$ by \Cref{lem:vanishing}. Moreover,   $\sExt^1_{\mathcal O_{\mathcal T} [\![u]\!]}(E^\vee,\mathcal O_{\mathcal T} [\![u]\!])$ is supported at finitely many closed points, hence has finite length $\ell$.  We proceed by induction on $\ell$, the $\ell=0$ case being trivial.  Let $x \in \mathcal T$ be any point in the support, $\cO_{\mathcal T,x}$ the local ring of $\mathcal T$ at $x$ with maximal idea $\frak m_x$, and let $\frak n_x$ be the ideal generated by $\frak m_x$ in $\cO_{\mathcal T,x}[\![u]\!]$.  Let 
 \[ \pi:B\to \Spec\cO_{\mathcal T,x} \] be the blow-up at $\frak m_x$, and note that: 
 \begin{itemize} 
     \item  
 the base-change $\pi[\![u]\!]:B[\![u]\!]\to \Spec (\cO_{\mathcal T,x} [\![u]\!])$ is the blow-up at $\frak n_x$, where $B[\![u]\!]$ denotes the scheme theoretic base change $B[\![u]\!] := B \times_{\Spec (\cO_{\mathcal T,x}} \Spec (\cO_{\mathcal T,x} [\![u]\!])$;
 \item  the $p$-adic completions of the local rings of $B$ are identified with the local rings of the formal admissible blow-up 
 \[ g:\mathcal T'\to \mathcal T \] of $\mathcal T$ at $x$.  
 \end{itemize}   In particular, for a closed point $x'$ of $\mathcal T'$ (which we also view as a closed point in $B [\![u]\!]$), the $p$-adic complete local ring 
 \[ (\mathcal O_{\mathcal T'} [\![u]\!])_{x'} = \mathcal O_{\mathcal T', x'} [\![u]\!] \]   is flat over $\cO_{B[\![u]\!],x'}$. Here  $(\mathcal O_{\mathcal T'} [\![u]\!])_{x'}$ denotes the stalk of the sheaf $\mathcal O_{\mathcal T'} [\![u]\!]$ of the $p$-adic formal scheme $\mathcal T'$ at $x'$ 

According to \Cref{lem:strictly decreasing}, the length of $\sExt^1_{\mathcal O_{B[\![u]\!], x'}}((\pi[\![u]\!]^*E_x)^\vee,\cO_{B [\![u]\!], x'})$ is strictly smaller than \[ \ell_x := \sExt^1_{\mathcal O_{\mathcal T, x} [\![u]\!]}(E_x^\vee,\mathcal O_{\mathcal T, x} [\![u]\!]),\] so the same is true of $\sExt^1_{ (\mathcal O_{\mathcal T'} [\![u]\!])_{x'}}((g[\![u]\!]^*E_x)^\vee, (\mathcal O_{\mathcal T'} [\![u]\!])_{x'})$.  Therefore, we know that 
\[ \ell \big(  \sExt^1_{ \mathcal O_{\mathcal T'} [\![u]\!]}((g[\![u]\!]^*E)^\vee,\mathcal O_{\mathcal T'} [\![u]\!]) \big) < \ell.\] 
By induction the proof is complete.
\end{proof}

\subsection{Existence of the Prismatic $F$-crystal} 

We are now ready to prove Theorem \ref{prop:existence_of_crystal}. 

\begin{proof}[Proof of Theorem \ref{prop:existence_of_crystal}]
Let $\mathfrak X = \Spf R = \Spf (\mathcal O_K [t])^{\wedge}_{p}$ be an integral model of $\sD$ as in the setting of Example \ref{Example:prisms_revisit} (2). In particular, the crystalline local system $\mathbb L$ gives rise to a vector bundle $E_U$ over $U = \spec (\mathfrak S_R) \backslash V(p, E(u))$, where we recall that $\mathfrak S_R = R_0 [\![u]\!]$ and $R_0 = (W [t_0])^\wedge_p$. Let $\fX_0 = \Spf R_0$. 
Let $E = j_* E_U$ where $j: U \rightarrow \spec \mathfrak S_R$ is the open immersion, which is a reflexive module over $\mathcal O_{\mathfrak X_0} [\![u]\!]$. By Proposition \ref{prop:blow-up to resolve}, we know that there exists successive admissible blowups of $\mathfrak X_0$ at closed points such that the pullback  $\pi[\![u]\!]^*E$ is a locally free $\mathcal O_{\mathfrak X_0'} [\![u]\!]$-module, where $\pi: \mathfrak X_0' \rightarrow \mathfrak X_0$ is the composition of the required  blowups. From the proof of Proposition \ref{prop:blow-up to resolve}, we know that by shrinking the radius of the disc in the generic fiber at each step of the blowup if necessary, we may without loss of generality assume that at each step we only need to blowup at the origin. In other words, the required blowups $\pi$ can be achieved by the map $\pi_n: R_0 = (W [t_0])^\wedge_p \rightarrow R_n = (W [t_n])^\wedge_p$ sending $t_0 \mapsto p^n t_n$ for some large enough integer $n$. This means that, after replacing the disc $\sD$ by its closed subdisc $\sD_n = \{|z| \le 1/p^n\}$, the local system $\mathbb L|_{\sD_n}$ gives rise to a pair $(E_{n, U}, \varphi_{n, U})$ where $E_{n, U}$ is a vector bundle over $U_n = \Spec R_n [\![u]\!] \backslash V(p, E(u))$ that extends to a vector bundle over $\Spec R_n [\![u]\!]$. By Lemma \ref{lemma:an_crystal_to_alg_crystal}, $\L|_{\sD_n}$  gives rise to a (unique up to isomorphism) prismatic $F$-crystal $(\mathcal E_n, \varphi_n)$ over $\Spf (\mathcal O_K [t'])^\wedge_p$ where $t' = p^n t$. This prismatic $F$-crystal is compatible with the $F$-crystal $\mathbb D_{\textup{crys}} (\mathbb L|_{x})$ under specialization to closed points $x \in \sD_n$ by the functoriality of the equivalence in Theorem \ref{thm:prismatic_F_crystals_and_local_systems}. This finishes the proof of the theorem. 
\end{proof}

\section{Constancy of the $F$-crystal} \label{sec: constancy}
In this section, we will prove that the fiber-wise $F$-crystal produced in Section \ref{Sec:pointwiseconstancy} is actually constant. Our argument is essentially Lemmas 1.10 and 1.9 of \cite{oortleaves} in the setting of $F$-crystals.

\begin{lemma}[Oort, Lemma 1.10]\label{Lemmaoort1.10}
    Let $\D_1$ and $\D_2$ be two $F$-crystals over $\Fpbar$ having the same rank and let $N$ be a fixed integer. Then there are finitely many sub-crystals of $\D_2$ isomorphic to $\D_1$ whose co-kernel is $p^N$ torsion.
\end{lemma}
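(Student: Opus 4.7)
My plan is to translate the statement into a finiteness result for orbits in a finitely generated $\Z_p$-module, using the Dieudonn\'e--Manin classification over $\Fpbar$ as the key structural input. I may assume without loss of generality that $\D_1$ and $\D_2$ are isogenous, since otherwise no sub-crystal of the desired form exists (an injection between equal-rank $F$-crystals with finite cokernel forces an isomorphism of the underlying isocrystals). Setting $R := \End(\D_1)$, $H := \Hom(\D_1, \D_2)$, and $G := \Aut(\D_1) = R^\times$, I will observe that every sub-crystal of $\D_2$ isomorphic to $\D_1$ with $p^N$-torsion cokernel is the image $\iota(M_1)$ of some injective $\iota \in H$ with $p^N M_2 \subseteq \iota(M_1)$, and that two such $\iota$'s produce the same sub-crystal iff they differ by right composition with an element of $G$. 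Letting $S_N$ denote the set of such $\iota$'s, the task reduces to bounding the number of right $G$-orbits on $S_N$.

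The next step is to invoke Dieudonn\'e--Manin, which splits $\D_i[1/p]$ into isoclinic summands with $\Hom$ between two simple isoclinic pieces either zero or a division algebra over $\Q_p$. Consequently $R$ is a $\Z_p$-order in a finite-dimensional semisimple $\Q_p$-algebra (so $R$ is itself finitely generated as a $\Z_p$-module), and $H$ is a finitely generated right $R$-module. For each $\iota \in S_N$, the map $p^N \iota^{-1}$ (well-defined on isocrystals) lies in $\Hom(\D_2, \D_1)$ by the cokernel hypothesis, and the identity $\iota \circ (p^N \iota^{-1}) = p^N \cdot \mathrm{id}_{\D_2}$ implies $v_p(\det \iota) \leq nN$ in any chosen bases, where $n$ is the common rank. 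A standard Smith-normal-form argument then shows the cyclic right $R$-submodule $\iota R \subseteq H$ has bounded index in $H$: there exists an integer $M$ depending only on $N$, $n$, and the order $R$ such that $p^M H \subseteq \iota R$ for every $\iota \in S_N$.

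The final step is to note that $\iota, \iota' \in S_N$ lie in the same right $G$-orbit if and only if $\iota R = \iota' R$, so $S_N/G$ injects into the set of right $R$-submodules of $H$ containing $p^M H$. Since $R/p^M R$ is a finite ring (as $R$ is finitely generated over $\Z_p$), the quotient $H/p^M H$ is a finite module, hence admits only finitely many right submodules. This concludes the proof.

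The main obstacle is the passage from the cokernel hypothesis to the uniform containment $p^M H \subseteq \iota R$; this relies on understanding how the determinant of $\iota$ controls the index of the cyclic $R$-submodule it generates, which in turn uses the semisimple structure of $R \otimes \Q_p$ supplied by Dieudonn\'e--Manin. Once this containment is established, the rest is routine bookkeeping with finite modules over finite rings.
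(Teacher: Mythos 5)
Your argument is correct, but it takes a genuinely different route from the paper. The paper's proof works directly in $L=\Hom(\D_1,\D_2)$: if $f_1\equiv f_2 \bmod p^N L$ and both images contain $p^N\D_2$, then $\Im f_1=\Im f_2$, so the number of sub-crystals is bounded by $|L/p^NL|$, which is finite because $L$ is a finitely generated $\Z_p$-module. You instead parametrize sub-crystals by $\Aut(\D_1)$-orbits of injections and identify orbits with cyclic right $R$-submodules $\iota R\subseteq H$ of uniformly bounded index, then count submodules of the finite module $H/p^MH$. Both proofs ultimately rest on the same crucial input --- that $\Hom$ between $F$-crystals over $\Fpbar$ is finitely generated over $\Z_p$ --- which you justify via Dieudonn\'e--Manin while the paper simply invokes it; that is a point in your favor. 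On the other hand, your ``main obstacle'' is not really an obstacle: since the cokernel hypothesis gives $p^N\iota^{-1}\in\Hom(\D_2,\D_1)$ integrally, one has $p^N h=\iota\circ\bigl((p^N\iota^{-1})\circ h\bigr)$ for every $h\in H$, so $p^NH\subseteq\iota R$ immediately with $M=N$; the determinant bound and Smith-normal-form detour are superfluous (though not wrong). So your proof buys a more self-contained treatment of finite generation and a structural description of the set of sub-crystals as a set of $R$-submodules, at the cost of extra machinery; the paper's congruence argument is shorter and avoids any use of $\End(\D_1)$ or the isogeny category beyond finite generation of $\Hom$.
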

\begin{proof}
    Let $L = \Hom(\D_1,\D_2)$. Consider the set $S \subset L$ defined as $S = \{f\in L: \Im(f) \supset p^N \D_2 \}$. Let $f_1,f_2 \in S$ be two maps such that $f_1 \equiv f_2 \mod p^N$ in $L/p^N$. A direct computation shows that $\Im f_1 = \Im f_2$. It follows that the number of sub-crystals is bounded by the image of the subset $S$ in $L/p^N$. The lemma follows from the fact that $L$ is a finite generated $\Z_p$ module.
\end{proof}

\begin{definition}
    Let $X/\Fpbar$ be a scheme and $\D$ an $F$-(iso)crystal.  Let $\pt_X:X\to \Spec\Fpbar$ be the map to a point, and for any point $x\in X(\Fpbar)$ let $i_x:\Spec\Fpbar\to X$ be the corresponding morphism.
    \begin{enumerate}
    \item We say $\D$ is \emph{constant} if there is an (iso)crystal $\D_0$ on $\Spec\Fpbar$ such that if $\D\cong \pt_X^*\D_0$.
    \item We say $\D$ is \emph{point-wise constant} if there is an (iso)crystal $\D_0$ on $\Spec\Fpbar$ such that $i_x^*\D\cong \D_0$ for every $x\in X(\Fpbar)$.
    \end{enumerate}
\end{definition}
We are now ready to prove 
\begin{theorem}\label{thm: constancy of F-crystal}
Let $X$ be a smooth connected variety over $\Fpbar$. Let $\D /X$ be a point-wise constant $F$-crystal such that the $F$-isocrystal $\D[1/p]$ is constant. Then $\D$ itself is constant.
\end{theorem}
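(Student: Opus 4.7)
The plan is to follow Oort's argument (Lemma 1.9 of \cite{oortleaves}), using \Cref{Lemmaoort1.10} as the crucial finiteness input. First I would realize $\D$ as a sub-$F$-crystal of a constant $F$-crystal. Fix a geometric point $x_0\in X(\Fpbar)$ and set $\D_0:=\D|_{x_0}$. Constancy of $\D[1/p]$ yields an isomorphism $\D[1/p]\cong \pt_X^{*}(\D_0[1/p])$ on $X$; clearing denominators uniformly (possible since $X$ is of finite type over $\Fpbar$) produces an injection $\iota:\D\hookrightarrow \pt_X^{*}\D_0$ of $F$-crystals whose cokernel is killed by some $p^N$. Point-wise constancy then exhibits, for every $x\in X(\Fpbar)$, the fiber $\D|_x$ as a sub-$F$-crystal of $\D_0$ which is isomorphic to $\D_0$ and has $p^N$-torsion cokernel. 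By \Cref{Lemmaoort1.10}, the collection $\cF=\{\D_{(1)},\ldots,\D_{(r)}\}$ of all such sub-$F$-crystals of $\D_0$ is finite.

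This produces a set-theoretic partition $X(\Fpbar)=\bigsqcup_{i=1}^r X_i$, where $X_i:=\{x\in X(\Fpbar):\D|_x=\D_{(i)}\text{ as submodules of }\D_0\}$. Each $\D_{(i)}\subset \D_0$ pulls back to a sub-$F$-crystal $\pt_X^{*}\D_{(i)}\subset \pt_X^{*}\D_0$ on $X$, and $x\in X_i$ if and only if both (a)~the composition $\D\hookrightarrow \pt_X^{*}\D_0\twoheadrightarrow \pt_X^{*}\D_0/\pt_X^{*}\D_{(i)}$ vanishes at $x$, and (b)~the composition $\pt_X^{*}\D_{(i)}\hookrightarrow \pt_X^{*}\D_0\twoheadrightarrow \pt_X^{*}\D_0/\D$ vanishes at $x$. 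Each condition cuts out the complement of the support of the image of a morphism of coherent $F$-crystals and is therefore open in $X$, so each $X_i$ is open; since $X=\bigsqcup_i X_i$, each $X_i$ is also closed. Connectedness of $X$ then forces exactly one $X_i$ to equal $X$, whence $\D|_x=\D_{(i)}$ for every $x$, and a Nakayama argument applied to the two morphisms above upgrades this fiberwise equality to the global equality $\D=\pt_X^{*}\D_{(i)}$ of sub-$F$-crystals of $\pt_X^{*}\D_0$. Thus $\D\cong \pt_X^{*}\D_{(i)}$ is constant.

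The main obstacle will be making the openness of $X_i$ precise within the $F$-crystal formalism, since $F$-crystals live on the crystalline site of $X$ rather than on $X$ itself. My plan is to reduce to standard coherent-sheaf semicontinuity by working Zariski-locally over smooth $p$-adic formal lifts $\widetilde U$ of affine opens $U\subset X$ to $W(\Fpbar)$, where every $F$-crystal evaluates to an honest coherent sheaf equipped with a Frobenius. On such a lift the support of the image of a morphism of coherent sheaves is closed in the usual sense, and since the notion of ``vanishing at a geometric point $x\in U$'' is intrinsic to $U$ and independent of the chosen lift, the resulting open locus descends to $U$. Gluing over affine opens then yields openness of each $X_i$ in $X$ globally, completing the argument.
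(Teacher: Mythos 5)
Your overall architecture (clear denominators to embed $\D$ into the constant crystal $\pt_X^*\D_0$, invoke \Cref{Lemmaoort1.10} to get a finite list of possible images, partition $X(\Fpbar)$ accordingly) agrees with the paper up to that point, but the way you finish has a genuine gap: the openness of each $X_i$ is not established by your argument, and the principle you appeal to is false at the level of generality at which you use it. "Vanishing of a morphism of crystals at $x$" means vanishing after base change along a lift $\tilde{x}:\widetilde{A}\to W(\Fpbar)$ of $x$ to the formal lift $\widetilde{U}=\Spf\widetilde{A}$, i.e.\ modulo the prime $\ker(\tilde{x})$; this is strictly weaker than the vanishing of the stalk of the image at the closed point $x$, which is what "complement of the support of the image" controls. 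The complement of the support is indeed open and is \emph{contained} in the fiberwise vanishing locus, but you need the reverse inclusion (fiberwise vanishing at one point forces vanishing nearby), and that direction fails even for morphisms of crystals: on $\A^1_{\Fpbar}$, multiplication by $t^p$ is a horizontal endomorphism of the mod-$p$ crystal $\cO_{\mathrm{crys}}/p$ (it is killed by $d$, hence a morphism of crystals), and its fiberwise vanishing locus is the single point $t=0$ --- neither open nor detectable by a support argument. So coherent-sheaf semicontinuity on a lift, which is all your third paragraph invokes, cannot give openness; any repair would have to use the Frobenius structure in an essential way (the map above is not $F$-equivariant), and no such argument is supplied. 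A smaller issue of the same flavor occurs at the end: "Nakayama" does not directly upgrade fiberwise equality to equality of subcrystals, since the relevant fibers are $W(\Fpbar)$-fibers rather than residue-field fibers of the formal lift (e.g.\ multiplication by $p$ vanishes on all residue-field fibers); one needs the $p$-adic induction/density argument instead.

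Note that the paper's proof deliberately runs the implication in the opposite, valid direction: from the finite partition it only extracts that \emph{one} piece $X_1$ is Zariski dense, and then shows that the two subcrystals $\img g$ and the constant subcrystal $\pt_X^*(g_x(\D_x))$ of $\pt_X^*\D_0$, which agree after inverting $p$ and agree at a Zariski-dense set of points, must coincide. That step is carried out by first showing morphisms of crystals extend uniquely over dense Zariski opens (reducing to $X$ affine), then evaluating on a smooth formal lift $\widetilde{A}$ with Frobenius lift, where "agreement on a dense set of $W$-points plus agreement after inverting $p$" propagates to agreement everywhere by a density argument on $\widetilde{A}$. Going from dense fiberwise agreement to identical agreement is soft; going from agreement at a single fiber to agreement on a neighborhood (your openness claim) is exactly the rigidity that fails without a serious use of the $F$-structure. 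As written, your proposal does not close this gap.
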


\begin{proof}
    Let $\D_0  $ be the $F$-crystal $i_x^*\D$ for $x\in X(\Fpbar)$. The constancy of $\D[1/p]$ gives an isogeny $g: \D \rightarrow \pt_X^* \D_0$. By Lemma \ref{Lemmaoort1.10}, the set $X(\Fpbar)$ admits a partition into finitely many subsets $X_1 \hdots X_m$ such that for two points $x,x' \in X_i$, we have $g_x(\D_x) = g_{x'}(\D_{x'})$. Without loss of generality, suppose that $X_1$ is Zariski dense, and let $x\in X_1$ be any point. Consider the inclusion of crystals $g': \pt_X^*\D_0 \rightarrow \pt_X^*\D_0$, where $g' = \pt_X^* g_x$. We now have that $\Im g'$ and $\Im g$ are two subcrystals of $\pt^*\D_0$ which agree on a Zariski-dense set of points. We will now show that they are the same. 
    
    We first reduce to the case that $X$ is affine.  We claim that for two crystals $\D_1,\D_2$ on $X$ and any dense Zariski open $U\subset X$, any morphism $f_U:\D_1|_U\to \D_2|_U$ extends uniquely to a morphism $f:\D_1\to \D_2$.  By the uniqueness, such an extension patches, so we may assume $X$ is affine, and in particular lifts to a formally smooth scheme $\fX/ W(\Fpbar)$ equipped with a lift of Frobenius $\phi:\fX\to\fX$.  Then $\D_1$ corresponds to a flat vector bundle $E_1$ on $\fX$ together with a flat morphism $\phi_{E_1}:\phi^*E_1\to E_1$ for which $\phi_{E_1}[1/p]$ is an isomorphism; likewise for $\D_2$ and $E_2$.  Then $f$ yields a morphism $\ff_\fU:E_1|_{\fU}\to E_2|_{\fU}$.  In characteristic 0, it is easy to see that $\ff_\fU[1/p]$ extends uniquely to a moprhism $E_1|_{\fX[1/p]}\to E_2|_{\fX[1/p]}$.  But then the underlying morphism of vector bundles $E_1\to E_2$ is defined outside of a set of codimension 2, hence extends uniquely, and the extension is compatible with the connection and the Frobenius structure. 

    Therefore, we suppose that $X = \Spec A$. Let $\tilde{A}$ be a $p$-adically complete smooth $W$-algebra with a lift of Frobenius such that $A = \tilde{A} \mod p$. Evaluating all these crystals on $\tilde{A}$, we obtain two Frobenius-stable flat-subbundles of $\D_0\otimes \tilde{A}$ which: 
    \begin{enumerate}
        \item Are the same after inverting $p$.
        \item Agree at a dense set of $W$-points.
    \end{enumerate}
    Even without using $F$-crystal structure, the formal-smoothness of $\tilde{A}$ implies that the these two sub-bundles must already agree. The theorem follows.

\end{proof}

\section{Proof of the extension theorem for $\Dstar$ in the good reduction case}\label{sec: Dstar}

In this section, we prove our main results for $\sD^\times$, assuming $\Im(f)$ intersects the good reduction locus of $X_F$.  This is a necessary assumption in the period image case; in the Shmiura variety case, we include the good reduction case argument separately as a simpler case of the general strategy.

\subsection{Extending crystalline local systems}\label{sec: crystalline extension}
The following result is Theorem 1.16 of \cite{DY}. 
\begin{theorem}[\cite{DY}]\label{thm: crystalline extension}
    Let $\bL/\Dstar$ be a $\Z_p$-local system with crystalline fibers everywhere. Then, $\bL$ extends to a crystalline local system on $\sD$. 
\end{theorem}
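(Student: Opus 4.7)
The plan is to reformulate the extension problem in terms of analytic prismatic $F$-crystals via \autoref{thm:prismatic_F_crystals_and_local_systems}, and then produce the extension using reflexive-sheaf techniques of the sort developed in \autoref{Sec:pointwiseconstancy}.

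Concretely, let $\fX = \Spf \cO_K\langle t\rangle$ be the natural integral model of $\sD$, and let $\fU \subset \fX$ denote the open formal subscheme obtained by removing the closed point $\{t=0\}$ of the special fiber; the rigid generic fiber of $\fU$ covers $\Dstar$. By \autoref{thm:prismatic_F_crystals_and_local_systems} applied to $\fU$, the crystalline local system $\bL$ corresponds to an analytic prismatic $F$-crystal $(\mathcal E, \varphi_{\mathcal E})$ on $\fU$. Evaluating on the prism $(\mathfrak S_R, E(u))$ of \autoref{Example:prisms}(2), this produces a vector bundle equipped with Frobenius on an open subscheme of $\Spec \mathfrak S_R$ whose complement has codimension at least two. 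I would then take the pushforward under the corresponding open immersion and double-dualize to obtain a reflexive coherent sheaf, apply \autoref{prop:blow-up to resolve} (possibly after shrinking $\sD$ to a smaller closed disc, as in the proof of \autoref{prop:existence_of_crystal}) to make this sheaf locally free, and invoke \autoref{lemma:an_crystal_to_alg_crystal} to descend the result to an analytic prismatic $F$-crystal on $\fX$. Running \autoref{thm:prismatic_F_crystals_and_local_systems} in reverse then produces the desired crystalline extension of $\bL$ to $\sD$.

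The main obstacle is ensuring that the Frobenius structure $\varphi_{\mathcal E}$ extends across $\{t=0\}$ without developing an essential pole, equivalently that the underlying \'etale $\Z_p$-sheaf has trivial monodromy around the puncture. This is where the hypothesis of crystallinity at \emph{every} classical point is crucial: pointwise crystallinity at a sequence of classical points approaching $t = 0$ should yield uniform bounds on Hodge--Tate weights and on Frobenius pole orders, ruling out essential singularities in the $F$-crystal at the origin. Establishing such uniform bounds in families is a $p$-adic analogue of the classical statement that a lisse sheaf with pointwise quasi-unipotent monodromy near a puncture extends after finite base change, and making this quantitative in the prismatic language is the central technical difficulty, likely requiring a careful analysis of the pole orders of $\varphi_{\mathcal E}$ and an application of Kedlaya--Liu-style slope filtrations along the boundary.
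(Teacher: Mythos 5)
There is a genuine gap, and in fact the paper does not prove this statement at all: it is quoted as Theorem 1.17 of \cite{DY}, and the text only sketches an argument in the special case where $\bL$ is pulled back from the generic fiber of a smooth formal scheme, via triviality of the associated $F$-isocrystal on the compactified special fibers of formal models of annuli together with \cite[Theorem 5.7]{p-adic-borel-Ag}. Your proposal attempts the general statement, and it fails at two concrete points. First, the setup is incorrect: if $\fU\subset\fX=\Spf \cO_K\langle t\rangle$ is the open formal subscheme obtained by deleting the closed point $\{t=0\}$ of the special fiber, its rigid generic fiber is only the thin annulus $\{|t|=1\}$, not $\Dstar$; the points with $0<|t|<1$ all specialize to $\bar t=0$, and $\Dstar$ (being non-quasicompact) has no formal model of this kind. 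So \autoref{thm:prismatic_F_crystals_and_local_systems} applied to $\fU$ sees only the outermost annulus of $\bL$, and you have no object to evaluate on the prism $(\mathfrak S_R, E(u))\in\fX_{\Prism}$ before the extension you are trying to construct exists.

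Second, even granting data on $\Spec\mathfrak S_R$ away from the locus corresponding to the puncture, that locus is the divisor $\{t_0=0\}$, of codimension one in the three-dimensional $\Spec\mathfrak S_R$, not codimension at least two. Pushforward, double-dualization, and \autoref{prop:blow-up to resolve} only handle extension across the codimension-two locus $V(p,E(u))$ (this is exactly how they are used in the proof of \autoref{prop:existence_of_crystal}, where the local system is already given on all of $\sD$); reflexivity gives nothing across a divisor, and in general a local system on $\Dstar$ with nontrivial monodromy genuinely fails to extend. This is precisely the point you flag in your last paragraph and leave unresolved: showing that fiberwise crystallinity forces the extension across $t=0$ (uniform control of Hodge--Tate weights, Frobenius poles, monodromy) is the actual content of the theorem of \cite{DY}, so the proposal presupposes, rather than proves, the hard step. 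By contrast, the paper's sketch in the pulled-back case avoids this entirely by working on closed annuli $\Ann_n$, using integral models whose reduced special fibers are chains of $\P^1$'s and $\A^1$'s with no combinatorial monodromy, trivializing the $F$-isocrystal there, and then invoking \cite[Theorem 5.7]{p-adic-borel-Ag} to pass from extension of the flat bundle $\mathcal{RH}_p(\bL)$ to extension of the local system.
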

For the sake of completeness, we will sketch a proof in the easy case where in addition there exists a smooth formal scheme $\fX$, a crystalline $\mathbb Z_p$-local system $\bL'/\fX_\eta$, and a map $g: \Dstar \rightarrow \fX_\eta$ such that  $\bL = g^* \bL'$. Let $\mathbb{E}[1/p]$ denote the isocrystal associated to $\bL'$ on $\frak{X}_p$. Consider the flat bundle underlying the filtered flat bundle $\mathcal{RH}_p(\bL)/\Dstar$ (resp. $\mathcal{RH}_p(\bL')/\fX_\eta$)---we will somewhat abusively use the same notation both for the filtered flat bundles and the underlying flat bundles. We have that $\mathbb{E}[1/p](\fX)$ is canonically isomorphic to $\mathcal{RH}_p(\bL')$ (see for eg \cite[Lemma 3.7]{bst-integral-canonical}). It suffices to prove that $\mathcal{RH}_p(\bL)$ extends to a flat bundle on $\sD$, by \cite[Theorem 5.7]{p-adic-borel-Ag}. We will in fact use the fact that $\bL$ is pulled back from $\fX_\eta$ to conclude that $\mathcal{RH}_p(\bL)|_{\sD'}$ is actually the trivial flat bundle for every closed sub-disc\footnote{In fact, we do not need to restrict to $\sD'$ for this triviality if $\fX$ satisfies the property that every map $\mathbb{A}^1 \rightarrow \fX_p$ extends to a map $\mathbb{P}^1\rightarrow \fX_p$. We note that $\integralS$ does satisfy this property.} of $\sD' \subsetneq \sD$. Indeed, it suffices to prove that $\mathcal{RH}_p(\bL)|_{\Ann'_n}$ is the trivial flat bundle for every integer $n$, where $\Ann'_n := \Ann_n \cap \sD'$. We pick an integral model $\frak{g'}_n: \mathfrak{A}'_n \rightarrow  \mathfrak{A}_{n+1} \rightarrow \fX$ of the composite map $g_n':=\Ann'_n \subsetneq \Ann_n \xrightarrow{g|_{\Ann_{n+1}}}\fX_\eta$. By the appendix of \cite{p-adic-borel-Ag}, we may assume that the irreducible components of the reduced special fiber $C_{n+1}$ (resp. $C'_n$) of $\frak{A}_{n+1}$ (resp. $\frak{A}'_n$) consist of (two) $\mathbb{A}^1$s and (an unspecified but finite number of) $\mathbb{P}^1$s, with the further condition that the there is no combinatorial monodromy. By \cite[Remark A.7]{p-adic-borel-Ag}, there is a marked point $x_i$ on each of the two $\mathbb{A}^1$s in $C_{n+1}$ such that no point in the open annulus $\Ann^\circ_{n+1}$ specializes to a point in $\mathbb{A}^1 \setminus \{ x_i\}$. It follows that the image of $C'_n$ in $C_{n+1}$ is contained in a proper (possibly reducible) curve, and in particular, $\frak{g}'_n$ extends to the compactification $\bar{C'}_n$ of $C'_n$, whose irreducible components are now solely $\mathbb{P}^1$s (and there is still no combinatorial monodromy). Every $F$-isocrystal on $\bar`{C}'_n$ is trivial, and therefore $\frak{g}^{\prime,*}_n(\mathbb{E}[1/p])$ is trivial. Evaluating on the thickening given by $\frak{A}'_n$, we have that $\frak{g}^{\prime,*}_n(\mathbb{E}[1/p])(\frak{A}'_n)$ is trivial. The result now follows from the chain of isomorphisms $\frak{g}^{\prime,*}_n(\mathbb{E}[1/p])(\frak{A}'_n) \simeq g^{\prime,*}_n(\mathbb{E}[1/p](\fX)) \simeq g^{\prime,*}_n(\mathcal{RH}_p(\bL)) \simeq \mathcal{RH}_p(\bL|_{\Ann'_n})$.

A very similar argument also works in the setting of geometric period images. Indeed, let $j$ be the smallest integer such that the image of $\Ann_n$ is contained in $(Y^j)^{\an}$. Then, there is a finite subset $\Xi \subset \Ann_n$ whose complement maps to $(Y^{j})^{\an}\setminus (Y^{j-1})^{\an}$. As $S^j \rightarrow Y^j$ is an isomorphism away from $Y^{j-1}$, $\Ann_n \setminus \Xi$ lifts to $(S^j)^{\an}$. The fact that $S^j \rightarrow Y^j$ is proper allows us to extend this to a map $\Ann_n \rightarrow (S^{j})^{\an}$. Now, the identical argument outlined above works. 

\subsection{The Shimura case}

We have the following proposition. 
\begin{proposition}\label{prop: constantFLconstantmap}
    Let $g:C\rightarrow \integralS_p$ be a map where $C$ is a curve over $\Fpbar$. Suppose that $g^{*}\Vcris$ is constant on $C$. Then the map $g$ is constant. 
\end{proposition}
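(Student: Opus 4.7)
The plan is to use that the Kodaira--Spencer map of $\Vdr$ is everywhere immersive on $\integralS$, as arranged in the setup by enlarging $N$. Since $C$ is connected, to conclude that $g$ is constant it suffices to show $dg=0$; and because the Kodaira--Spencer of the pullback $g^*\Vdr$ factors as $g^*\mathrm{KS}\circ dg$ with $g^*\mathrm{KS}$ fiberwise injective, the task reduces to showing that the Kodaira--Spencer class of $g^*\Vdr$ vanishes.

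To extract this from the hypothesis that $g^*\Vcris$ is constant, I would first pick a smooth formal lift $\widetilde C$ of (the normalization of) $C$ over $W(\Fpbar)$, and use formal smoothness of $\integralS$ to lift $g$ to a map $\widetilde g:\widetilde C\to\hat{\integralS}$. The flat bundle underlying $\widetilde g^*\Vdr$ is then recovered from $g^*\Vcris$ by evaluating on the PD-thickening $\widetilde C\to C$. The isomorphism of $F$-crystals $g^*\Vcris\cong\pt_C^*\D_0$ provided by the constancy hypothesis therefore yields a Frobenius-compatible trivialization of $\widetilde g^*\Vdr$ as a flat bundle on $\widetilde C$.

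The heart of the matter is to upgrade this to a trivialization of $\widetilde g^*\Vdr$ as a \emph{filtered} flat bundle mod $p$, which will give the sought-after vanishing of the mod-$p$ Kodaira--Spencer of $g^*\Vdr$. For this, the Fontaine--Laffaille structure on $\VFL$ enters: strong divisibility together with Griffiths transversality should force the Hodge filtration to be rigid in families once the underlying $F$-crystal is fixed, so that trivializing the $F$-crystal automatically trivializes the filtration mod $p$. This is the analog in arbitrary Fontaine--Laffaille weight of Oort's observation, in the abelian/Dieudonn\'e case, that the Hodge filtration is determined by the Frobenius. Granted this rigidity, the mod-$p$ Kodaira--Spencer of $g^*\Vdr$ vanishes, immersivity of $\mathrm{KS}$ forces $dg=0$, and $g$ is constant.

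The main obstacle will be this rigidity step: making precise the claim that the Hodge filtration mod $p$ is pinned down by the $F$-crystal structure together with the ambient Fontaine--Laffaille condition, so that $F$-crystal constancy genuinely transfers to filtration constancy in families. Everything else---the factorization of the Kodaira--Spencer of $g^*\Vdr$, fiberwise injectivity of $g^*\mathrm{KS}$, and the connectedness of $C$---is formal and follows directly from the setup.
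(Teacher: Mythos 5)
Your proposal is correct and takes essentially the same route as the paper's proof: pass to the image/smooth locus of $C$, choose a smooth lift $\tilde{C}/W$, evaluate the constant $F$-crystal on the lift to trivialize the flat bundle, use the Fontaine--Laffaille structure to see that the filtration is pinned down by Frobenius, and then conclude from the everywhere-immersive Kodaira--Spencer map that $g$ is constant. The rigidity step you single out as the main obstacle is exactly where the paper's argument turns, and it is resolved there by the observation that the Hodge filtration mod $p$ is simply the kernel of (powers of) Frobenius mod $p$, so constancy of the $F$-crystal does transfer to constancy of $g^*\Vdr$ as a filtered flat bundle.
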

\begin{proof}
We first replace $C$ by its image -- note that the $F$-crystal remains constant. By replacing $C$ by an open subset, we may assume that $C$ is a smooth curve. We pick a smooth lift $\tilde{C}/W$. Consider $g^* \Vdr$, the filtered flat bundle on $S$ pulled back to $C$. The underlying flat bundle is obtained by taking the mod $p$ reduction of the flat bundle $(g^* \Vcris)(\tilde{C})$. The filtration is simply the kernel of (powers of) Frobenius mod $p$. Therefore, we have that $g^*\Vdr$ is constant as a filtered flat bundle. This implies that the Kodaira-Spencer map is trivial on $C$, which contradicts the versality of the Kodaira-Spencer map. The proposition follows. 
\end{proof}

We are now ready to prove the one-dimensional disk case of \Cref{thm:introextension} for a Shimura variety in the good reduction case.

\begin{theorem}\label{disk Shimura}
    \Cref{thm:introextension} is true for $a=1,b=0$ and $X$ a Shimura variety, assuming $\Im(f)$ intersects the good reduction locus of $X_F$.
\end{theorem}
\begin{proof}
We will prove that the image of $\Dstar$ lands in a residue disc after suitably shrinking $\sD$ -- this would yield the theorem. The proof consists of assembling the various results already proved.

 We have that $f^* \bL$ extends to a local system with crystalline fibers on $\sD$. By Proposition \ref{prop:existence_of_crystal}, we may shrink $\sD$ and obtain an $F$-crystal $\D$ on $\A^1 \bmod p$ such that for any classical $y\in \sD$, $\D_{\cris}(\bL_y) = \D_{\bar{y}}$. By further shrinking $\sD$, we get that the $F$-crystal $\D$ is constant, and therefore the isomorphism class of $\D_{\cris}(\bL_y)$ is independent of the point $y$. 

Let $\Ann_n \subset \sD$ be a closed annulus with outer radius 1 and inner radius $\frac1{p^n}$, and let $f_n: \mathfrak{A}_n \rightarrow \integralS$ be an integral model for the map $f|_{\Ann_n}$. By \cite[Appendix]{p-adic-borel-Ag}, we have that the special fiber $C$ of $\mathfrak{A}_n$ is a union of two affine lines and some number of projective lines with trivial combinatorial monodromy. It suffices to show that $f_n|_{C_i}$ is constant for every component $C_i$ of $C$. 

Let $\bar{y} \in C_i$ and let $y\in \mathfrak{A}^{\rig}_n(K) = \Ann_n$ be a point that specializes to $\bar{y}$. We have that $_{\cris}\V_{\bar{y}} \simeq \D_{\cris(\bL_y)}$ and therefore we have that $f_n^*(_{\cris}\V)_{C_i}$ is point-wise constant. Let $\bar{C}_i$ be the compactification of $C_i$. The map $C_i \rightarrow \integralS_p$ extends to $\bar{C}_i \rightarrow \integralS_p$ the results of Section \ref{Sec: bdry}. Therefore, the $F$-crystal $f_n^*( \Vcris)|_{C_n}$ extends to an $F$-crystal on $\bar{C}_i$, and therefore the $F$-isocrystal must be constant -- indeed every $F$-isocrystal on $\P^1$ is constant. We are now in the situation of a point-wise constant $F$-crystal such that the underlying $F$-isocrystal is constant. By \autoref{thm: constancy of F-crystal}, we have that $f_n^*( \Vcris)|_{C_n}$ is constant. The theorem follows by applying \autoref{prop: constantFLconstantmap}.
\end{proof}

\subsection{Period images}

We have the following result. 
\begin{proposition}\label{prop: pointwise crystal}
    Fix some integer $j$ and consider $\pi^j: S^j \rightarrow Y^j$. Let $y\in \integralY^j(\Fpbar)$ denote some point and let $\cF_y \subset \cS^j$ denote the fiber over $y$. Then we have that $\Ucris^j|_{\cF_y}$ is point-wise constant. Further, the isomorphism class of this crystal is $\D_{\cris}({\VetZp}_{\tilde{y}})$, where $\tilde{y} \in Y(\cO_K)$ is any lift of $y$. 
\end{proposition}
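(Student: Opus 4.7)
The plan is to identify $\Ucris^j|_x$ with a $\D_{\cris}$ value via Fontaine--Laffaille on $\cS^j$, and then recognize this as the fiber at $y$ of an intrinsic $F$-isocrystal on $\integralY_s$ coming from the prismatic formalism on the smooth integral canonical model $\integralY$.

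First, for any $x \in \cF_y(\Fpbar)$, smoothness of $\cS^j$ yields a lift $\tilde{x} \in \cS^j(\cO_{K'})$ over some finite extension $K'/K$. The Fontaine--Laffaille correspondence attached to $\UFL^j$ on $\cS^j$ produces an isomorphism $\Ucris^j|_x \cong \D_{\cris}(\UetZp^j|_{\tilde{x}_\eta})$, and since by construction $\UetZp^j = (\pi^j)^* \VetZp|_{\integralY^j}$, this equals $\D_{\cris}(\VetZp|_{\tilde{y}'_\eta})$, where $\tilde{y}' := \pi^j(\tilde{x}) \in \integralY^j(\cO_{K'})$ lifts $y$. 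It remains to show this is independent of the choice of lift of $y$.

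For the independence, I would apply \Cref{thm:prismatic_F_crystals_and_local_systems} to the smooth $p$-adic formal scheme $\widehat{\integralY}$, producing an analytic prismatic $F$-crystal associated to $\VetZp$, and then specialize via diagram (\ref{eq:prismatic_F_crystals_restriction_and_realization}) to an $F$-isocrystal $\mathbb{E}$ on $\integralY_s$. For each lift $\tilde{y}$, evaluating this analytic prismatic $F$-crystal on the Breuil--Kisin prism attached to $\tilde{y}$ and base-changing along $\mathfrak S \to W$, $u \mapsto 0$ (as in \Cref{Example:prisms_revisit}) recovers $\D_{\cris}(\VetZp|_{\tilde{y}_\eta})$; since this composite sees only the reduction $y$, the resulting object is canonically $\mathbb{E}|_y$ and hence depends only on $y$. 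Combining, we obtain point-wise constancy of $\Ucris^j|_{\cF_y}$ with common value $\D_{\cris}(\VetZp|_{\tilde{y}_\eta})$ for any lift $\tilde{y}$, including $\tilde{y} \in Y(\cO_K)$.

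The main obstacle I anticipate is verifying the compatibility of the two crystalline realizations: the Fontaine--Laffaille realization on $\cS^j$ computing $\Ucris^j$, versus the specialization of the analytic prismatic $F$-crystal on $\widehat{\integralY}$ pulled back along $\pi^j$. This should follow from functoriality of \Cref{thm:prismatic_F_crystals_and_local_systems} for the composite $\cS^j \to \integralY^j \hookrightarrow \integralY$ together with the agreement of Fontaine--Laffaille modules with (analytic) prismatic $F$-crystals on smooth formal schemes, but some care is needed because $\UFL^j$ and $\Udr^j$ on $\cS^j$ are known to coincide only after further pullback to $\cT^j$. If this global route proves problematic, one can instead interpolate between two lifts $\tilde{x}_1, \tilde{x}_2 \in \cS^j(\cO_{K'})$ of points in $\cF_y$ by a formal one-parameter family in $\cS^j$ and apply \Cref{prop:existence_of_crystal} directly to $\UetZp^j$ on the resulting disc.
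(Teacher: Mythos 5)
Your first step (the Fontaine--Laffaille comparison on $\cS^j$ giving $\Ucris^j|_x \cong \D_{\cris}(\UetZp^j|_{\tilde x_\eta}) = \D_{\cris}(\VetZp|_{\pi^j(\tilde x)_\eta})$) agrees with the paper, but your independence step has a genuine gap. The crystalline realization of an \emph{analytic} prismatic $F$-crystal lands in $F$-\emph{iso}crystals, not $F$-crystals (right-hand column of Diagram (\ref{eq:prismatic_F_crystals_restriction_and_realization})): evaluating on the crystalline prism $(W(k'),(p))$ only produces a module over $W(k')[1/p]$. So your argument identifies $\D_{\cris}(\VetZp|_{\tilde y_\eta})[1/p]$ with $\mathbb E|_y$ and proves lift-independence only up to isogeny, i.e.\ pointwise constancy of $\Ucris^j[1/p]$, whereas the proposition — and its downstream use, where Theorem \ref{thm: constancy of F-crystal} takes \emph{integral} pointwise constancy and rational constancy as two separate hypotheses — requires the isomorphism class of the lattice $F$-crystal. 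Upgrading to a genuine prismatic $F$-crystal over $\widehat{\integralY}$ (or over the completed local ring at $y$, where the relevant prism is a power-series ring of dimension $\geq 3$) is exactly what can fail and what Section \ref{Sec:pointwiseconstancy} has to finesse with blow-ups and shrinking, so it cannot be invoked for free. Two further problems: $\integralY$ is only an integral canonical model of a period image and is not assumed smooth (and crystallinity of $\VetZp$ relative to $\integralY$ is not part of the setup — only $\UetZp^j$ on $\cS^j$ is asserted crystalline), so the hypotheses of Theorem \ref{thm:prismatic_F_crystals_and_local_systems} are not verified; and your fallback via \Cref{prop:existence_of_crystal} does not work either, since that result only gives constancy of $\D_{\cris}(\bL_x)$ after shrinking to a smaller closed subdisc, which cannot contain two points reducing to distinct points of $\cF_y$.

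The idea you are missing is the one the paper's proof turns on: the \emph{uniformity} of the stratified resolution guarantees a single lift $\tilde y$ of $y$ for which the specialization map from the rigid fiber $F_{\tilde y}\subset T(\cF_y)$ of $\pi^j$ onto $\cF_y$ is surjective. Then any two points $z_1,z_2\in\cF_y(\Fpbar)$ admit lifts $\tilde z_1,\tilde z_2$ lying over the \emph{same} point $\tilde y$, so $(\UetZp^j)_{\tilde z_1}$ and $(\UetZp^j)_{\tilde z_2}$ are literally the same Galois representation $(\VetZp)_{\tilde y}$, and the integral Fontaine--Laffaille comparison on $\cS^j$ immediately gives $(\Ucris^j)_{z_1}\cong(\Ucris^j)_{z_2}$; the statement for an arbitrary lift then follows by specializing any point of $(\cS^j)^{\rig}$ above it into $\cF_y$. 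No prismatic input on $\integralY$ is needed, and the integral comparison is mediated entirely by the honest $F$-crystal $\Ucris^j$ on the smooth scheme $\cS^j$.
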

\begin{remark}
    Note that this proposition assigns a canonical crystal to every $y\in \integralY(\Fpbar)$. 
\end{remark}
\begin{proof}
    Let $T(\cF_y)$ denote the tube in $\cS^j$ over $\cF_y$. For any lift $\tilde{y}$ of $y$, let let $F_{\tilde{y}} \subset T(\cF_y)$ denote the fiber of $\pi^j$ over $\tilde{y}$. As the stratification is uniform, there exists a lift $\tilde{y}$ of $y$, such the specialization map $F_{\tilde{y}}\rightarrow \cF_y$ is surjective. Let $\tilde{z}\in T(\cF_y)$ be any point and let $z\in \cF_y(\Fpbar)$ denote its specialization. As $\UetZp^j$ is crystalline, we have a canonical isomorphism of crystals $\D_{\cris}({\UetZp^j}_z) \rightarrow {\Ucris^j}_{z}$. 

    Let $z_i,z_2 \in \cF_y$. Let $\tilde{z}_1$ and $\tilde{z}_2$ be points in $F_{\tilde{y}}$ which specialize to $z_1$ and $z_2$ respectively. By the above remark, it suffices to prove that $\D_{\cris}({\UetZp^j}_{\tilde{z}_1})$ is isomorphic to $\D_{\cris}({\UetZp^j}_{\tilde{z}_2})$. But this follows directly from the fact that $\pi^j(\tilde{z}_1) = \pi^j(\tilde{z}_2)$ and that $\UetZp$ is simply the pull-back of $\VetZp$ under $\pi^j$. The proof of the second part now follows from this and the isomorphism $\D_{\cris}({\UetZp^j}_{\tilde{z}}) \rightarrow {\Ucris^j}_z$. 
\end{proof}

We are now ready to prove the one-dimensional extension theorem in this setting.
\begin{theorem}  \label{disk period}
    \Cref{thm:introextension} is true for $a=1,b=0$ and $X$ a geometric period image.
\end{theorem}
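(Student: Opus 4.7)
The plan is to parallel the Shimura-variety case just proved, with Griffiths ampleness of $V_{\dR}$ on $\integralY$ playing the role that Kodaira--Spencer immersivity played via \autoref{prop: constantFLconstantmap}. First I invoke \autoref{thm: all good or all bad} to conclude $\Im(f) \subset Y^{\good}$, and I choose the minimal $j$ with $\Im(f) \subset (Y^j)^{\an}$. The preimage of $(Y^{j-1})^{\an}$ under $f$ is then a finite subset $\Xi \subset \Dstar$; off $\Xi$, the map lifts uniquely through the isomorphism $\cS^j \setminus \cS^{j-1} \to Y^j \setminus Y^{j-1}$, and the finitely many missing points fill in because $\pi^j : \cS^j \to \integralY^j$ is proper, producing a rigid-analytic lift $\tilde f : \Dstar \to (\cS^j)^{\an}$.

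Next I extend crystalline structure. By \autoref{thm: crystalline extension} the local system $\bL := \tilde f^* \UetZp^j = f^* \VetZp$ extends crystallinely to $\sD$, and by \autoref{prop:existence_of_crystal}, after replacing $\sD$ by a smaller closed subdisc I may assume that the isomorphism class of $\D_{\cris}(\bL_x)$ is independent of the classical point $x \in \sD$. Exhausting $\Dstar = \bigcup_n \Ann_n$ by closed annuli and choosing integral models $\tilde{\mathfrak{A}}_n \to \cS^j$ of $\tilde f|_{\Ann_n}$, the appendix of \cite{p-adic-borel-Ag} ensures that the special fiber $C = \bigcup_i C_i$ is a tree of affine and projective lines with trivial combinatorial monodromy. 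For each closed point $\bar z \in C$, \autoref{prop: pointwise crystal} identifies $\tilde f^* \Ucris^j|_{\bar z}$ with $\D_{\cris}(\bL_x)$ for a specialization lift $x$ of $\bar z$, so $\tilde f^* \Ucris^j|_C$ is pointwise constant. Using purity of $\VetZl$ exactly as in the proof of \autoref{thm: all good or all bad}, each $\mathbb{A}^1$-component $C_i$ extends to $\bar C_i \cong \P^1$ in $\integralY_p$; since every $F$-isocrystal on $\P^1$ is constant, \autoref{thm: constancy of F-crystal} upgrades pointwise constancy to full constancy of the $F$-crystal $\tilde f^* \Ucris^j$ on each $\bar C_i$.

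To finish, I use Griffiths ampleness in place of \autoref{prop: constantFLconstantmap}. Constancy of $\tilde f^* \Ucris^j|_{\bar C_i}$ gives constancy of the filtered flat bundle on the Fontaine--Laffaille side; to transfer this to the Griffiths line bundle of $\Udr^j$, I pull back to $\cT^j$, where $\Udr^j$ and the filtered flat bundle of $\UFL^j$ are canonically identified. Since the Griffiths bundle of $\Udr^j$ is $\pi^{j*}$ of the Griffiths bundle on $\integralY$, this constancy implies that the pullback of the latter along $\pi^j \circ \tilde f : \bar C_i \to \integralY_p$ has degree zero. Ampleness then forces $\pi^j \circ \tilde f|_{\bar C_i}$ to be constant on each component, and the absence of combinatorial monodromy on the connected curve $C$ pins $f|_{\Ann_n}$ inside a single residue disc of $Y^{\an}$. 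Rigid-analytic Riemann extension therefore extends $f$ across $0 \in \sD$.

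The main obstacle I anticipate is the transfer step when $j < m$: on $\cS^j$ the two filtered flat bundles ($\Udr^j$ versus the one underlying $\UFL^j$) differ in general, so the constant filtered flat bundle coming out of the $F$-crystal argument is a priori only the Fontaine--Laffaille one. The bridge through $\cT^j$, where the two are canonically identified, together with the requirement that this identification descend compatibly to the constant filtered flat bundle on $\bar C_i$, is what ultimately yields the correct degree-zero conclusion for the Griffiths bundle of $V_{\dR}$ on $\integralY$. Apart from this technical bookkeeping, the rest of the argument is a close analogue of the Shimura case.
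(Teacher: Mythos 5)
Your architecture is the one the paper uses: extend $\bL=f^*\VetZp$ crystallinely (\Cref{thm: crystalline extension}), shrink via \Cref{prop:existence_of_crystal} so that $\D_{\cris}(\bL_x)$ has constant isomorphism class, take formal models of the annuli, get pointwise constancy from \Cref{prop: pointwise crystal}, upgrade via \Cref{thm: constancy of F-crystal}, transfer to the de Rham side through $\cT^j$, and contradict Griffiths ampleness. Your one organizational change is harmless but under-justified: you lift $f$ itself to $(\cS^j)^{\an}$ with $j$ determined by the generic-fiber image and then take formal models over $\cS^j$, whereas the paper never lifts the rigid map; it lifts each special-fiber component $C_0$ of $\mathfrak{A}_n\to\integralY$ to $\cS^{j}$ with $j$ minimal for that component. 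Your route needs (i) that $\Xi$ is only discrete, not finite (finite on each $\Ann_n$ suffices), and (ii) that the lifted map on each annulus lands in the tube of $\cS^j_p$ so that a formal model over $\hat{\cS^j}$ exists; this follows from good reduction of $f$, closedness of $\integralY^j$ in $\integralY$, and properness of $\pi^j$, but you assert it without argument. These points are repairable, as is the glossed passage to a finite cover of the component in order to lift to $\cT^j$.

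The genuine gap is your treatment of the $\A^1$-components. In the Shimura case the affine components are handled by the infinitesimal Kodaira--Spencer argument (\Cref{prop: constantFLconstantmap}), which works on any curve; your substitute for period images is a global degree argument, and on an affine curve it says nothing, since every line bundle on (a finite cover of) $\A^1$ is trivial. To run it on $\bar C_i\cong\P^1$ you claim that \Cref{thm: constancy of F-crystal} gives constancy of the crystal on all of $\bar C_i$, but pointwise constancy is only available on $C_i$: the point of $\bar C_i\setminus C_i$ is not the specialization of any point of the annulus, so neither the constancy of $\D_{\cris}(\bL_x)$ nor \Cref{prop: pointwise crystal} identifies the crystal fiber there with the fixed class, and \Cref{thm: constancy of F-crystal} as stated requires pointwise constancy at \emph{every} $\Fpbar$-point. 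Constancy on $C_i$ alone does not determine the degree of the Griffiths bundle on $\bar C_i$, so the ampleness contradiction does not go through, and your concluding mechanism (``all components are contracted, hence the connected curve $C$ maps to a point'') is not established. The paper avoids this entirely: it only proves that the $\P^1$-components are contracted and then invokes \cite[Appendix, Remark A.7]{p-adic-borel-Ag} (trivial combinatorial monodromy) to conclude that the open subannulus maps into a single residue disc. Either adopt that step, or strengthen \Cref{thm: constancy of F-crystal} so that pointwise constancy on a Zariski-dense set of points suffices (its proof appears to permit this), which would legitimize running your argument on $\bar C_i$; as written, the $\A^1$ step fails.
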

\begin{proof}
    As in the Shimura case, we will prove that the image of $\Dstar$ lands in a residue disc after suitably shrinking $\sD$. Also as in the Shimura case, we may further shrink $\Dstar$ such that the isomorphism class of $\D_{\cris}(\bL_y)$ is independent of the point $y\in \Dstar$. 
    We let $\Ann_n$, $\fA_n$, $f_n$, and $C$ be as in the Shimura case. Pick some component $C_0$ of $C$ isomorphic to $\P^1$. Suppose that $j$ is the smallest integer such that $f_n(C_0) \subset \integralY^j$. Then, we have that $C_0$ generically maps into $\integralY^j \setminus \integralY^{j-1}$ and therefore we may lift $f_n$ to a map $g_n: C_0\rightarrow \cS^j$. By \autoref{prop: pointwise crystal}, we have that $g_n^{*}(\Ucris^j)$ is point-wise constant. As $C_0$ is isomorphic to to $\P^1$, we also have that the iso-crystal $g_n^{*}(\Ucris^j[1/p])$ is constant. By \autoref{thm: constancy of F-crystal}, we have that $g_n^{*}(\Ucris^j)$ is constant. As in the Shimura case, it follows that $g_n^{*}(\UFL^j)$ is trivial as a filtered flat bundle. Up to passing to a finite cover $C'_0$ of $C_0$, we may lift the map $g_n$ to a map $h_n: C'_0 \rightarrow \cT^j$. We therefore have that $h_n^{*}(q^{i*}(\Udr^j))$ is trivial as a filtered flat bundle, as $\Udr^j$ and $\UFL^j$ pull back to isomorphic filtered flat bundles on $\cT^j$. Therefore, the Griffiths bundle is trivial on $C'_0$. If $f_n|_{C_0}$ is non-constant, we also have that Griffiths bundle on $C_0$ associated to $g_n^{*}(\Udr^j) \simeq f_n^*(\Vdr)$ is ample, and therefore the Griffiths bundle must be ample on $C'_0$. This is a contradiction. Therefore, $f_n|_{C_0}$ must be constant. It follows that the map $f_n$ contracts every component isomorphic to $\P^1$. By \cite[Appendix Remark A.7]{borel}, it follows that the open subannulus of $\Ann_n$ maps to a single residue disc. This holds for every $n$, and therefore we have that the open punctured disc maps to a single residue disc. The theorem follows. 
\end{proof}


\section{Proof of the general extension theorem for $\Dstar$ in the Shimura case} \label{section:bad_reduction_case}

In this section we prove the following:
\begin{theorem}\label{thm:borel boundary}
    \Cref{thm:introextension} is true for $a=1,b=0$ and $X$ a Shimura variety.
\end{theorem}







\subsection{Log crystals on the Shimura variety and its boundary}

\subsubsection{Weight filtrations on isocrystals}
Let $(Y,M_Y)$ be a log scheme over $\overline{\F}_p$.  

\begin{definition}
 Let  $(Y,M_Y)_{\CRIS}$ be the  $p$-completed big logarithmic crystalline site where we equip $\Z_p$ with  the trivial log structure (cf. \cite[Appendix B]{DLMS2}). By an (iso)crystal we shall exclusive mean an (iso)crystal in vector bundles on this site, as in \Cref{sec: prismatic setup}.   $F$-crystals and $F$-isocrystals are defined similarly as in Definition \ref{def:isocrystals}. 
 \end{definition}

Let $R=W\langle x_1,\ldots,x_a,y_1,\ldots ,y_b\rangle$, $\fX^\mathrm{std}=\Spf R$ and $\fD^\mathrm{std}=V(y_1\cdots y_b)\subset \fX^\mathrm{std}$.  Let $\fX/\Spf W$ be a smooth $p$-adic formal scheme and let $\fD \subset \fX$ be a relative strict normal crossings divisor---that is, a pair $(\fX,\fD)$ which is \'etale-locally isomorphic over $W$ to a disjoint union of $(\fX^\mathrm{std},\fD^\mathrm{std})$ for possibly different values of $a,b$ and for which each irreducible component of $\fD$ is regular.  By strata of $(\fX,\fD)$ we mean irreducible components of intersections of irreducible components of $\fD$.    Let $\fD'$ be a locally closed union of strata, which for simplicity we assume to have pure codimension 1.  We let $\fX^\LOG:=(\fX,M_{\fX})$ denote the log $p$-adic formal scheme associated to $(\fX,\fD)$ over the base $W$ where $W$ is given the trivial log structure, so $M_\fX$ is the monoid of functions which are invertible away from  $\fD$. We let $\fD^\LOG:=(\fD,M_\fD)$ (resp. ${\fD'}^\LOG:=(\fD',M_{\fD'})$) denote $\fD$ (resp. $\fD'$) with the log structure pulled back from $\fX^\LOG$. We let $\fX^\LOG_p$ (resp. $\fD_p,\fD'_p,\fX_p^\LOG, \fD_p^\LOG,\fD_p'^\LOG$) be the mod $p$ special fibers of $\fX$ (resp. $\fD,\fD',\fX^\LOG, \fD^\LOG,\fD'^\LOG$).  Note that $\fX^\LOG$ is smooth over $\Spf W$. 

Let $\breve \fD'^\LOG$ denote the $p$-completed PD envelope of $\fD'^\LOG_p \subset\fX^\LOG$. By \cite[Theorem 6.2]{kato}, evaluating on $\fX^\LOG,\breve \fD'^\LOG$ gives the following equivalences of categories. 
\begin{enumerate}\label{properties:crystals}
    \item Crystals on $\fX_p^\LOG$ (i.e. on $(\fX^\LOG_p)_{\CRIS}$) and flat bundles with quasinilpotent connection on $\fX^\LOG$. Note that such an object is just a flat vector bundle on $\fX$ with logarithmic singularities along $\fD$ and nilpotent $p$-curvature mod $p$. 

    \item Crystals on $\fD_p'^\LOG$ and logarithmic flat bundles with quasinilpotent connection on $\breve \fD'^\LOG$.     
\end{enumerate}

Now suppose we have a flat vector bundle $(V,\nabla)$ on $\fD'^\LOG$ (which may or may not lift to $\breve \fD'^\LOG$). Consider the composition
\[V\xrightarrow{\nabla} V\otimes\hat\Omega_{\fD'^\LOG/W}\to V\otimes\left(\hat\Omega_{\fD'^\LOG/W}/\hat \Omega_{\fD'/W}\right).\]
Since $$\left(\hat\Omega_{ \fD'^\LOG/W}/\hat \Omega_{ \fD'/W}\right)\cong \nu_*\cO_{\tilde \fD'}$$
where $\nu:\tilde \fD'\to \fD'$ is the normalization of $\fD'$, for each stratum $\Sigma$ of $\fD'$ we therefore obtain $\mathrm{codim}\,\Sigma$ commuting endomorphisms of $V|_\Sigma$, indexed by the divisors containing $\Sigma$, which we call the internal residues.  Note that the category of crystals on $\fD'_p$ is equivalent to the full subcategory of crystals on $\fD_p'^\LOG$ with vanishing internal residues.

Given an $F$-crystal $\D$ on $\fD_p'^\LOG$ with Frobenius $\phi_\D:F^*\D\to \D$ and a flat subbundle $U\subset \D(\fD'^\LOG)$, we say that $U$ is $\phi_\D$-stable if the following is true.  There is a natural flat sub-vector bundle $F^*U\subset (F^*\D)\fD'^\LOG)$ which, after passing to a Zariski cover for which there is a global lift of Frobenius $\tilde F:\fD'^\LOG\to \fD'^\LOG$, agrees with the pullback $\tilde F^*U\subset \tilde F^*\D(\fD'^\LOG)=(F^*\D)(\fD'^\LOG)$.  We then say that $U$ is $\phi_\D$-stable if $\phi_\D(\fD'^\LOG)(F^*U)\subset U$.  This condition is easily checked to be independent of the choices.

In the following, we say a flat vector bundle $(V,\nabla)$ on $\fX^\LOG/W$ is algebraic if there is a projective log smooth pair $(\bar\fX,\bar\fD)/W$ containing $(\fX,\fD)$ as a Zariski open and to which $(V,\nabla)$ extends as $(\bar V,\nabla)$.  If $(V,\nabla)$ has nilpotent residues, we assume the same for the extension. A similar definition is made for the adic generic fiber $\fX^\LOG[1/p]$ viewed as a log adic space. A flat sub-vector bundle $U\subset V|_{\fD'}$ will be algebraic if $\fD'$ extends to a locally closed union of strata $\bar\fD'\subset\bar\fD$ and $U$ extends to a flat sub-vector bundle of $\bar V|_{\bar \fD'}$.  We have the following result.

\begin{proposition}\label{prop: weight filt on pullback crystal}
We work with the above setup.  Let $\D_{\fX_p^\LOG}$ be an $F$-crystal on $ \fX_p^\LOG$ with nilpotent residues and for which the flat vector bundle $\D_{\fX_p^\LOG}(\fX^\LOG)$ is algebraic.  Let $\D :=\D_{\fX_p^\LOG}|_{ \fD_p'^\LOG}$  be the restriction and $\phi_\D:F^*\D\to\D$ its Frobenius. Let $U\subset \D(\fD'^\LOG)$ be an algebraic flat sub-vector bundle which is locally a summand on the level of coherent sheaves. Then there exists a sufficiently large integer $m$ such that $(F^m)^*U$ is the evaluation of a (unique) sub crystal $\D_U\subset(F^m)^*\D|_{\fD_p'^\LOG}$ which is also locally a summand on the level of coherent sheaves.  If moreover $U$ is $\phi_\D$-stable, then $\D_U$ is a sub $F$-crystal.
\end{proposition}

\begin{proof}Since $\phi_\D$ is flat and the Frobenius morphism on $(F^m)^*\D$ is $(F^m)^*\phi_\D$, the second statement is clear.  For the first statement, first note the following:  

\begin{lemma}\label{lem:complex analytic}
   Assume that $(\fX, \fD)/W$ is a projective log smooth pair for this lemma. 
   Let $(V,\nabla)$ be a flat vector bundle on $\fX^\LOG[1/p]$ with nilpotent residues and $U\subset (V,\nabla)|_{\fD'^\LOG)[1/p]}$ a   flat sub-bundle.  Then $U$ lifts to a unique  flat sub-vector bundle $\hat U\subset \hat V$ over the $\fD'[1/p]$-adic of $\fX^\LOG[1/p]$.
\end{lemma}

\begin{proof} 
By taking an embedding $K\subset\C$, we may prove the corresponding statement over $\C$.  Clearly the lift is unique if it exists.  The lift exists in the completion along any stratum supported on $\fD'$, as in this case there is Zariski-locally a retraction.  Moreover, the lift exists analytically in a tubular neighborhood of $\fD'$.  It follows that the lift exists in the completion along $\fD'$. 
\end{proof}

Back to the proof of the proposition. The extension is clearly unique if it exists, so we may freely pass to a Zariski cover.  Thus, we may assume an \'etale morphism $f:(\fX,\fD)\to(\fX^\mathrm{std},\fD^\mathrm{std})$.  Let $\tilde F:(\fX^\mathrm{std},\fD^\mathrm{std})\to (\fX^\mathrm{std}, \fD^\mathrm{std})$ be the standard lift of Frobenius $x_i\mapsto x_i^p$, $y_i\mapsto y_i^p$.  We also denote by $\tilde F:(\fX, \fD)\to (\fX, \fD)$ the unique lift through $f$.  

Given the above equivalences, we are reduced to the following local statement, where we don't assume $\fX$ is projective:
\begin{lemma}\label{lem: upper triangular}
    Let $(\cO^{m+n},\nabla)$ be an algebraic flat vector bundle on $\fX^\LOG$ with nilpotent residues such that $\nabla$ is upper triangular (with respect to the splitting $\cO^{m+n}=\cO^m\oplus\cO^n$) in restriction to $\fD'^\LOG$.  Then for fixed sufficiently large $m\in\mathbb{N}$ there is a unique matrix $P=\left(\begin{smallmatrix}
        1&0\\
        M&1
    \end{smallmatrix}\right)$ defined over the $p$-adic completion $\breve \fD'$ of the PD envelope of $\fD'$ in $\fX$, with $M=0$ when restricted to $\fD'$, such that 
    $P^{-1}((\tilde F^m)^*\nabla) P$ is upper triangular.
\end{lemma}
\begin{proof}We follow the proof of \cite[II.2.13]{sabbah}.  The assumption on the connection means that writing $\nabla=d+A$ and 
\[A=\sum_i A_idx_i+\sum_j B_j\frac{dy_j}{y_j}\]
that the matrices $A_i$ and $B_j$ are of the form $\left(\begin{smallmatrix}
        *&*\\
        0&*
    \end{smallmatrix}\right)$ mod $(y_1\cdots y_b)$.  
    
    Since the connection is algebraic, by \Cref{lem:complex analytic}, such a $P$ exists after inverting $p$ over the $\fD'$-adic completion, and it is easily seen to be unique.

    Write $A_i=\sum A_{ik}(y_1\cdots y_b)^k$ and likewise for $B_j$ and $M$.  Note $M_0=0$.   We have
    \[P^{-1}\nabla P = P^{-1}dP+P^{-1}AP\]
    and so the $\frac{dy_j}{y_j}$ term of the upper triangular condition is 
    \[y_j\frac{\partial}{\partial y_j}M-MB_j^{11}-MB_j^{12}M+B_j^{21}+B_j^{22}M=0\]
where we write $C=\left(\begin{smallmatrix}
C^{11}&C^{12}\\C^{21}&C^{22}    
\end{smallmatrix}\right)$ for any matrix $C$.  Thus,
\[kM_k-M_kB_{j0}^{11}+B_{j0}^{22}M_k=-B_{jk}^{21}+\sum_{\substack{\alpha+\beta=k\\\alpha<k}} M_\alpha B_{j\beta}^{11}-\sum_{\substack{\alpha+\beta=k\\\beta<k}} B_{j\alpha}^{22}M_\beta+\sum_{\alpha+\beta+\gamma=k} M_\alpha B_{j\beta}^{12}M_\gamma.\]
which is defined mod $(y_1\cdots y_b)$.  Note that the last term has no summands with $\alpha$ or $\gamma$ equal to $k$.  Let $M_{k|j}$ be the restriction to $y_j=0$, and likewise for the other terms.  Since $B_{j0|j}^{11}$ and $B_{j0|j}^{22}$ are nilpotent, the morphism $\nu_{j}:N\mapsto -NB_{j0|j}^{11}+B_{j0|j}^{22}N$ is nilpotent of nilpotency index $\leq (\mathrm{rk}\; V)^2$.  Then $(1+\nu_j/k)^{-1}$ is a polynomial in $\nu_j/k$ of order $<(\mathrm{rk}\; V)^2$ and we have
\begin{equation}\notag\label{formula for term}M_{k|j}=\frac{1}{k}(1+\nu_j/k)^{-1}\left(-B_{jk|j}^{21}+\sum_{\substack{\alpha+\beta=k\\\alpha<k}} M_{\alpha|j} B_{j\beta|j}^{11}-\sum_{\substack{\alpha+\beta=k\\\beta<k}} B_{j\alpha|j}^{22}M_{\beta|j}+\sum_{\alpha+\beta+\gamma=k} M_{\alpha|j} B_{j\beta|j}^{12}M_{\gamma|j}\right)\end{equation}

Thus, setting $r=\mathrm{rk}\; V$ and assuming $v(B_{k|j})\geq b\geq 0$, we have 
\[v(M_{k|j})\geq r^2(b-v(k))+\min\left(\min_{\alpha<k}v(M_{\alpha|j}),\min_{\alpha+\beta\leq k}\left(v(M_{\alpha|j})+v(M_{\beta|j})\right)\right).\]
Applying the above analysis to the connection $(F^m)^*\nabla$, we can take $b=m$, and further have that $B_{jk}=0$ unless $v(k)\geq m$.   The above formula for $M_{k|j}$ implies it is also true that $M_{k|j}=0$ unless $v(k)\geq m$.  Set $a(k):=\frac{1}{r^2}v(M_{p^mk|j})$, so that
\[a(k)\geq -v(k)+\min\left(\min_{i<k}a(i),\min_{i+j\leq k}(a(i)+a(j))\right).\]
\begin{claim}
    $a(k)\geq -\frac{2}{p-1}k$.
\end{claim}
\begin{proof}
    We prove by induction that $a(k)\geq v(k)-2v(k!)$.  This is clearly true for $k=1$, since $a(1)=0$.  For $i<k$ we have 
    \[-v(k) +\left(v(i)-2v(i!)\right)\geq v(k)-2v(k!).\]
    For $i+j<k$ we have
    \[-v(k)+ \left(v(i)-2v(i!)\right)+\left(v(j)-2v(j!)\right)\geq v(k)-2v(k!). \]
    For $i+j=k$, note that
    \[v\left({\textstyle\binom{k}{i}}\right)\geq v(k)-\frac{v(i)+v(j)}{2}.\]
    Indeed, this is clear from $v(k)=\min(v(i),v(j))$ if $v(i)\neq v(j)$, and otherwise we may scale and assume $v(i)=v(j)=0$.  Since $v\left(\binom{k}{i}\right)$ is the number of carries adding $i$ and $j$ in base $p$, this is at least the number of terminal 0s in $k$ in base $p$.  Thus,
    \begin{align*}-v(k)+ (v(i)-2v(i!))+(v(j)-2v(j!))&=v(k)-2v(k!)+\left(-2v(k)+v(i)+v(j)+2v\left({\textstyle\binom{k}{i}}\right)\right)\\
    &\geq v(k)-2v(k!).\end{align*}
    To finish, we have $a(k)\geq -2v(k!)$ and $v(k!)\leq \frac{k}{p-1}$, so the claim is proven.
\end{proof}
Now, we have $v(M_{p^mk|j})\geq -\frac{2r^2}{p-1}k$ while on the other hand
\[v((p^mk)!)=\frac{p^m-1}{p-1}k+v(k!).\]
This is true for each $j$, so for $m$ sufficiently large, $v(M_{k})+v(k!)$ tends to infinity and the lemma is proven.
\end{proof}
\end{proof}

Let $(V,\nabla)$ be a flat bundle on $\fD'^\LOG[1/p]$ with nilpotent residues.  On each stratum $\Sigma$ (equipped with its induced log structure), each internal residue endomorphism $N:V|_\Sigma\to V|_\Sigma$ has an associated weight filtration $W(N)_\bullet V|_\Sigma$ by flat subbundles.  It is uniquely determined by the properties that $N(W(N)_kV|_\Sigma)\subset W(N)_{k-2}V|_\Sigma$ for all $k$ and induces $N^k:\gr^{W(N)}_{k}V|_\Sigma\xrightarrow{\cong}\gr_{-k}^{W(N)}V|_\Sigma$ for all $k$.  We say $V$ is local monodromy compatible if there is a global filtration $W_\bullet V$ of $V$ restricting to the weight filtration of each residue on each stratum.  In this case we call $W_\bullet V$ the weight filtration.

We say an isocrystal $\D[1/p]$ on $\fD_p'^\LOG$ with nilpotent residues is local monodromy compatible if the evaluation $V$ on $\fD'^\LOG[1/p]$ is, and we say a crystal $\D$ is local monodromy compatible if the associated isocrystal $\D[1/p]$ is.  If $W_\bullet V$ comes from a filtration $W_\bullet \D[1/p]$ of the isocrystal $\D[1/p]$ (resp. from a filtration $W_\bullet\D$ of the crystal $\D$ which is locally split on the level of coherent sheaves), we call $W_\bullet \D[1/p]$ (resp. $W_\bullet \D$) the weight filtration of $\D[1/p]$ (resp. $\D$).  It is evidently unique.  In particular, $W_\bullet\D$ is $\D\cap W_\bullet\D[1/p]$ if it exists, and $W_\bullet \D[1/p] $ (resp. $W_\bullet \D$) is a filtration by $F$-isocrystals (resp. $F$-crystals) if $\D[1/p]$ (resp. $\D$) is an $F$-isocrystal (resp. $F$-crystal).
\begin{corollary}
    Let $\D_{\fX_p^\LOG}$ be an $F$-crystal on $\fX_p^\LOG$ with nilpotent residues, and assume its restriction $\D:=\D_{\fX_p^\LOG}|_{\fD_p'^\LOG}$ is local monodromy compatible.  Then $\D[1/p]$ admits a weight filtration $W_\bullet \D[1/p]$.  Moreover, the associated graded $\gr^W_\bullet \D[1/p]$ is the pullback of an $F$-isocrystal on $\fD'_p$ (with trivial log structure).
\end{corollary}
\begin{proof}
    The first claim follows from \Cref{prop: weight filt on pullback crystal}.  The second claim follows since an isocrystal on $\fD_p'^\LOG$ with trivial residues is the pullback of an isocrystal on $\fD'_p$.
\end{proof}

\subsubsection{Weight filtrations on crystals for Shimura varieties}
In the following, over the reflex field $E$ we take a log smooth toroidal compactification $\bar S$ of $S$, which admits  morphism $\bar S\to S^{\BB}$. $S^{\BB}$ is naturally stratified by Shimura varieties, and we refer to their preimages $T$ in $\bar S$ as Baily--Borel strata.  Up to a (toroidal) modification of $\bar S$, we may assume each such stratum has pure codimension 1.  We let $\bar S^\LOG$ be $\bar S$ equipped with its natural log structure and $T^\LOG$ the induced log structure on $T$.  Finally, let $\V_\Hdg$ denote the variation of Hodge structure whose underlying local system corresponds to the adjoint representation of $G$.  We denote by $\bar\V_\Hdg$ the Deligne extension equipped with its logarithmic connection and the Schmid extension of the Hodge filtration.  It is naturally a filtered flat vector bundle on $\bar S^\LOG$.

\begin{lemma}\label{good intersections}
\begin{enumerate}
    \item Let $\bar\V_\Hdg|_{T^\LOG}$ be the restriction to a filtered log flat vector bundle on $T^\LOG$.  Then $\bar\V_\Hdg|_{T^\LOG}$ is local monodromy compatible in the sense of the previous section.  In particular there is a global weight filtration $W_\bullet\bar\V_\Hdg|_{T^\LOG}$.
    \item Let $\Sigma\in T$ be a stratum and let $\{N_i\}_{1\leq i\leq k}$ be the residues associated to the divisors meeting $\Sigma$.  Then for any $a\in \Q_{\geq 0}^k$ the weight filtration of $\sum a_iN_i$ is $W_\bullet\bar\V_\Hdg|_{T^\LOG}|_{\Sigma}$.
    \item The Hodge filtration $\Fil^\bullet\bar\V_\Hdg|_{T^\LOG}$ intersects the weight filtration $W_\bullet \bar\V_\Hdg|_{T^\LOG}$ typically..
\end{enumerate}
    
\end{lemma}
\begin{proof}
    The first claim is standard, and follows since the internal local monodromy of a stratum preserves the weight filtration and is trivial in the graded.  The second is also standard, and follows because the nearby cycles mixed Hodge structure along $\Sigma$ is graded polarized by $q(-,N_i^\bullet-)$ for each $i$.  For the third, note that $\Fil^\bullet \bar\V_\Hdg$ has (at most) nonzero degrees $-1,0,1$, and $W_\bullet\bar\V_\Hdg$ has at most nonzero degrees $-2,-1,0,1,2$.  Since $(W_\bullet\bar\V_\Hdg,\Fil^\bullet\bar\V_\Hdg)$ pointwise admits an integral structure (given by the iterated nearby cycles functor) that is graded polarized by $q(-,N^\bullet-)$ and which has Hodge weights $(p,q)$ satisfying $-1\leq p,q\leq 1$, $\Fil^{1}$ surjects onto $\gr^W_2$ and intersects $W_1$ typically.  Further, the image of $\Fil^1\cap W_1$ projects to a Lagrangian subspace (that is, a maximal isotropic) of $\gr_1^W$, so $\Fil^1\cap W_0$ is typical.  The remaining intersections follow from duality.   
\end{proof}

\begin{remark}\label{retraction 1}
    \Cref{good intersections} is the first instance of the existence of a retraction onto the boundary in the case of a Shimura variety.  It means that, for a sufficiently small tubular neighborhood $U$ of a Baily--Borel stratum $T$, the weight and Hodge filtrations give a variation of mixed Hodge structures on $U\backslash T$.  In particular, the associated graded variation on $T$ (which comes via pullback from the map to the corresponding stratum $S_T^{\BB}\subset S^{\BB}$ of the Baily--Borel compactification) extends to $U$, and therefore defines an extension of the map $T\to S_T^{\BB}$ to $U\to S_T^{\BB}$.  In the setting of a general variation of Hodge structures, we do not obtain a variation of mixed Hodge structures on $U\backslash T$ because $\Fil^\bullet$ and $W_\bullet$ intersect atypically along $T$.
\end{remark}

We may spread out $\bar \scrS$ and $\scrS^{\BB}$ and assume the former is a log smooth compactification $\bar \scrS$ with a morphism $\bar \scrS\to \scrS^{\BB}$ over $\cO_E[1/n]$.  We define Baily--Borel strata in the obvious way, and equip them with their natural log structures.  We let $\fS$ denote the $p$-adic completion of $\scrS$, and likewise for $\bar\fS$.  We may further assume \Cref{good intersections} gives a filtration $W_\bullet \D(\fT^\LOG)$ of the evaluation $\D(\fT^\LOG)$ by flat subbundles which is locally split on the level of coherent sheaves and which intersects $\Fil^\bullet \D(\fT^\LOG)$ typically.  

We have the following crucial result. 
\begin{proposition}\label{prop: weight filtration crystal}
Let $\fT^\LOG\subset \fS^\LOG$ be a Baily-Borel stratum and set $\D=\bar\V_\cris|_{\fT_p^\LOG}$.  Then $\D$ admits a weight filtration $W_\bullet \D$.  Moreover, the associated graded $\gr^W_\bullet \D$ is the pullback of an $F$-crystal on $\fT_p$ (with trivial log structure). 
\end{proposition}
\begin{proof}
    For simplicity, denote $\D=\bar\V_\cris|_{\fT_p^\LOG}$ with Frobenius $\phi_\D:F^*\D\to\D$.  For any $m$ denote $^mW_\bullet:=(F^m)^*W_\bullet \D(\fT^\LOG)$, which is a filtration of $((F^m)^*\D)(\fT^\LOG)$ by flat subbundles which is locally split on the level of coherent sheaves.  Note that since $^0W_\bullet$ is $\phi_\D$-stable, for any $m$ the saturation of $((F^{m-1})^*\phi_\D)(\fT^\LOG)\left(^mW_\bullet\right)$ is $^{m-1}W_\bullet$. By \Cref{prop: weight filt on pullback crystal}, there is some integer $m_0$, such that for any $m\geq m_0$, $^{m}W_\bullet$ is the evaluation of a filtration $W_\bullet (F^m)^*\D$ of the $m$-fold Frobenius pullback by sub-$F$-crystals which is locally split on the level of coherent sheaves.   
    \begin{claim}For all $m$, $^mW_\bullet$ is the evaluation of a filtration $W_\bullet (F^m)^*\D$ by sub-$F$-crystals which is locally split on the level of coherent sheaves.
    \end{claim}
    \begin{proof}We prove the claim by descending induction on $m$, the base case having been discussed above.  If the claim is true for any particular $m$, by the above the saturation of $((F^{m-1})^*\phi_\D)(W_\bullet (F^m)^*\D)$ gives the desired filtration $W_\bullet(F^{m-1})^*\D $ provided we show it is locally split on the level of coherent sheaves.  
    
    In the following we think of our crystals in terms of their evaluations on $\breve\fT^\LOG$.  To check the above condition we may pass to a Zariski cover and thereby assume a global lift of Frobenius $\tilde F:\breve\fT^\LOG\to \breve\fT^\LOG$.  Since $\D$ is the restriction of a log Fontaine--Laffaille module, its evaluation on $\breve\fT^\LOG$ comes equipped with a filtration $\Fil^\bullet\D(\breve\fT^\LOG)$ with respect to which $\phi_\D(\breve\fT^\LOG)$ is strongly divisible, meaning it extends to an isomorphism
        \[\phi_\D(\breve\fT^\LOG):\sum_i p^{-i}\tilde F^*\Fil^i\D(\breve\fT^\LOG)\to \D(\breve\fT^\LOG).\]
        It follows that $((F^{m})^*\phi_D)(\breve\fT^\LOG)$ is strongly divisible with respect to $(\tilde F^m)^*\Fil^\bullet\D(\breve\fT^\LOG)$ for any $m$.  But $(\tilde F^m)^*\Fil^\bullet\D(\breve\fT^\LOG)|_{\fT^\LOG}$ and $^mW_\bullet$ intersect typically for any $m$, since the same is true for $m=0$ and both are pulled backs under $\tilde F|_{\fT^\LOG}^m$.  As intersecting typically is an open property, if the claim is true for any particular $m$, then $(\tilde F^m)^*\Fil^\bullet\D(\breve\fT^\LOG)$ and $(W_\bullet(F^m)^*\D)(\breve\fT^\LOG)$ intersect typically, so the saturation of the image of $(W_\bullet(F^m)^*\D)(\breve\fT^\LOG)$ under $((F^{m-1})^*\phi_\D)(\breve\fT^\LOG)$ is locally split on the level of coherent sheaves.
    \end{proof}

\end{proof}


\subsection{Weakly admissible subobjects}

We have the following result. 
\begin{lemma}\label{lem: existence of weight filtration Qp local system fiber}Let $\fT\subset\bar\fS$ be a Baily--Borel stratum.
    Let $\tilde{x}\in \fS^{\rig}(K)$ be a classical point that specializes to $x\in \fT(\Fpbar)$. Under the natural identification $\bar{\V}_{\cris}[1/p]|_x(K_0) \simeq D_{\mathrm{st}}({\VetZp}|_{\tilde{x}}[1/p])$, the filtration $({W_\bullet}\bar\V_\cris|_{\fT^\LOG}[1/p])|_x (K_0)$ of $ \bar\V_\cris[1/p]|_x(K_0) $ from \Cref{prop: weight filtration crystal} equals the filtration induced by $N$. Further, this filtration is in the category of weakly admissible submodules, and therefore yields a filtration of $W_{\bullet}\VetZp|_{\tilde x}$ of $ \VetZp|_{\tilde{x}}$ such that the associated graded Galois representation is crystalline. 
\end{lemma}

To prove this result we will need the following lemmas. The setting is over a point. 

\begin{lemma}\label{lem: selfdual filtered F isocrystal}
    Let $\D$ be a self dual filtered $F$-isocrystal for which the Newton polygon is above the Hodge polygon.  Then the Newton and Hodge polygons have the same endpoint.
\end{lemma}
\begin{proof}
    The Newton/Hodge polygons of the dual are obtained by taking the reflection of the Newton/Hodge polygon through the $y$-axis and translating the left endpoint to the origin.  Thus, the Newton polygon is above the Hodge polygon for the dual iff the Newton polygon is above the Hodge polygon when they are shifted to have the same right endpoint.  Thus, if Newton is above Hodge for both the isocrystal and its dual, the right endpoints must coincide.
\end{proof}
\begin{lemma}\label{N GT} Let $\D$ be the filtered log $F$-isocrystal obtained by restriciting $\V_{\mathrm{FL}}[1/p]$ to a point in $\bar\fS_p$.  
\begin{enumerate}
\item We have $N \Fil^q\D\subset \Fil^{q-1}\D$.
\item The weight filtration $W_\bullet \D$ of $\D$ with respect to $N$ is by weakly admissible sub-log $F$-isocrystals.
\end{enumerate}
\end{lemma}
\begin{proof}For part 1, the filtered flat vector bundle $\V_\mathrm{FL}$ is the $p$-adic completion of a filtered flat logarithmic vector bundle $(V,\nabla,\Fil^\bullet)$ on $\bar\scrS$.  It suffices to verify that the filtered flat vector bundle $(\hat V,\nabla,\Fil^\bullet)$ obtained from taking the $\scrD$-adic completion of $(V,\nabla,\Fil^\bullet)[1/p]$ has the property that, if $\scrD_0$ is a codimension 1 stratum of $\scrD$ and $N$ the flat lift of the residue along $\scrD_0$, then $N\Fil^q\subset \Fil^{q-1}$.  In the Shimura variety case, this is because $N$ is a flat section of the flat vector bundle associated to the adjoint representation, for which $\Fil^{-1}$ is everything. 

For part 2, for each $k$, $N^k:\gr^W_{-k}\D\to\gr_k^W\D(k)$ is an isomorphism of filtered $F$-isocrystals.  On the other hand, the polarization form gives an isomorphism $\gr^W_{-k}\D\cong (\gr_k^W\D)^\vee$.  Thus, by \Cref{lem: selfdual filtered F isocrystal} and the weak admissibility condition on $\D$, for the largest $k$ for which $\gr_k^W\D\neq 0$ we have that $\gr_{\pm k}^W\D$ are weakly admissible.  Thus, $W_{k-1}\D/W_{k}\D$ is weakly admissible, and the claim follows by induction.     
\end{proof}
\begin{proof}[Proof of \Cref{lem: existence of weight filtration Qp local system fiber}]
    
Given the above lemmas, we need only verify the first claim.  The $N$ operator on $\bar{\V}_{\cris}[1/p]|_x(K_0) $ is $\sum v_iN_i$, where the $N_i$ are the flat lifts of the residues associated to the divisors containing $x$ and $v_i$ is the valuation of the defining equation of the $i$th divisor in $\tilde x$.  By \ref{good intersections}, the weight filtration induced by $N$ is equal to $({W_\bullet}\bar\V_\cris|_{\fT^\LOG}[1/p])|_x (K_0)$.
\end{proof}

\begin{remark}\label{retraction 2}
    \Cref{N GT} is the second instance of a retraction onto the boundary.  In the setting of a general variation of Hodge structures, the flat lifts of the residues will not be Griffiths transverse. In other words, given an arbitrary semistable Galois representation, consider the weakly admissible module associated to it. The filtration induced by the monodromy operator $N$ is in general \emph{not} a filtration by weakly admissible submodules, i.e. the analogue of Lemma \ref{N GT} does not hold for arbitrary semistable local systems. 
\end{remark}


We also require a compatibility statement. Let $\tilde{x}$ and $x$ be as in  Lemma \ref{lem: existence of weight filtration Qp local system fiber}. There are two ways to obtain a graded $F$-crystal on the crystalline site of $x = \Spec \Fpbar$ from this data.  On the one hand, we have the pullback $(\gr_\bullet ^W \bar\V_\cris|_{\fT^\LOG_p})|_{x} $ of the graded $F$-crystal $\gr_\bullet ^W \bar\V_\cris|_{\fT^\LOG_p}$ given by \Cref{prop: weight filtration crystal}.  On the other hand, we have a filtration $W_\bullet{\VetZp}|_{\tilde{x}}[1/p]$ of the fiber of the $\Q_p$-local system as in \Cref{lem: existence of weight filtration Qp local system fiber} which we may intersect with the $\Z_p$-lattice to get $W_\bullet {\VetZp}|_{\tilde{x}}:={\VetZp}|_{\tilde{x}}\cap W_\bullet{\VetZp}|_{\tilde{x}}[1/p]$.  This yields a filtered  log  prismatic $F$-crystal on $(\Spf\cO_K)^\LOG$ where the latter is given the log structure associated to $\N \rightarrow \mathcal O_K$ sending $1 \mapsto \varpi$, 
hence an associated filtered log $F$-crystal $W_\bullet\D$ on $x^\LOG$ via the crystalline realization.  The associated graded $\gr^W_\bullet \D$ is a graded $F$-crystal on $x$.
\begin{proposition}\label{prop:compatibility of crystals}
    Setup as above. Then $\gr^W_\bullet \D$ and $(\gr_\bullet ^W \bar\V_\cris|_{\fT^\LOG_p})|_x$ are naturally isomorphic as graded $F$-crystals.  
\end{proposition}

\begin{proof}
    The morphism $f:\Spf \cO_K\to \fS$ coming from $\tilde x$ is compatible with the natural log structures, so we obtain a morphism $\hat g: (\Spf \mathcal O_K)^{\log} \rightarrow \bar{\mathfrak S}$ which reduces to a map   $g:x^\LOG\to \fT_p^\LOG$ on the special fiber. There is a natural functor from log Fontaine--Laffaille modules over $\bar{\mathfrak S}$ to log prismatic $F$-crystals over $\bar{\mathfrak S}$ constructed as follows. First there is a natural functor from log Fontaine--Laffaille modules over $\bar{\mathfrak S}$ to the category $ \textup{Loc}^{\textup{st}}_{\Z_p, [0, p -2]}(\bar{\mathfrak S} [1/p])$ of $\Z_p$-local systems over $\bar{\mathfrak S} [1/p]$ that are semistable in the sense of \cite[Definition 4.18]{Inoue}, with Hodge--Tate weights contained in $[0, p-2]$. 
    This latter category is in turn naturally equivalent to the category $
    \Vect^{\varphi}_{[0, p-2]} (\bar{\mathfrak S}_{\Prism})$ of log prismatic $F$-crystals over $\bar{\mathfrak S}$ with Hodge--Tate weights in the Fontaine--Laffaille range by an upcoming result of Inoue.\footnote{More precisely, building on \cite[Theorem A]{Inoue} and \cite[Theorem 6]{Koshikawa2},  Inoue proves that the \'etale realization functor induces a natural equivalence  
   $\Vect^{\textup{an}, \varphi} (\bar{\mathfrak S}_{\Prism}) \isom \textup{Loc}^{\textup{st}}_{\Z_p}(\bar{\mathfrak S} [1/p])$ between analytic log prismatic $F$-crystals on $\bar{\mathfrak S}$ (see Definition \ref{def:Guo_Reinecke_analytic} and \cite[Definition 4.3]{Inoue}) and semistable Kummer \'etale local systems on $\bar{\mathfrak S} [1/p]$. His proof extends techniques used in \cite{GuoReinecke} and relies on the log Beilinson fiber sequence \cite[Theorem 13.8]{Diao_logainf1}. In the Fontaine--Laffaille range, the analytic log prismatic $F$-crystals automatically extend to log prismatic $F$-crystals. 
    } In particular, the  log prismatic $F$-crystal associated to ${\VetZp}|_{\tilde{x}}$ is naturally the pullback of the log prismatic   $F$-crystal on $\bar\fS$ associated to $\V_\mathrm{FL}$ to $x$, hence the same is true for the crystalline realizations.  We therefore have two filtrations $W_\bullet \D$ and $g^*W_\bullet \bar\V_\cris|_{\fT^\LOG_p}$ of naturally identified log $F$-crystals.  Both are locally split (on the level of $\Z_p$-modules), hence saturated, and by \Cref{lem: existence of weight filtration Qp local system fiber} agree on the level of log $F$-isocrystals, hence they agree (as graded $F$-crystals).
\end{proof}

\subsection{Filtration by semistable local systems}




Suppose given $f:\Dstar\to \scrS^\an$.  We now work in the setting of $\Ann_n \subset \Dstar$. There are integral models and maps
\[\begin{tikzcd}
    \mathfrak{A}_n^\LOG\ar[d]\ar[rd,"g"]&\\
    \mathfrak{A}_n'^\LOG\ar[r,"g'"] &\overline{\fS}^\LOG
\end{tikzcd}\]with $\mathfrak{A}_n$ semistable and $\mathfrak{A}'_n$ as in \cite[Appendix]{p-adic-borel-Ag}, such that at the level of rigid generic fibers, the vertical map is the identity and both $g,g'$ induce $\Ann_n \rightarrow \bar\fS^{\rig}$. By Proposition \ref{prop: tube over bb stratum} the image of $\Dstar$ is contained in the tube $T_{\fT}$ over a Baily--Borel stratum $\fT$. We have the following crucial result, which follows from \cite[Proposition 3.46 (c)]{DLMS2}. 

\begin{proposition}\label{prop:sublocalsystemonannuli}
    There is a filtration $W_\bullet f^*\VetZp$ of $f^*\VetZp$ such that $W_\bullet f^*\VetZp|_{\Ann_n}$ is associated to the filtration $g_p^*W_\bullet\V_\cris|_{\fT_p^\LOG}[1/p]$ of the log $F$-isocrystal $g_p^*\V_\cris=g_p^*\V_\cris|_{\fT_p^\LOG}[1/p]$. This filtration is compatible with point-wise filtration constructed in Lemma \ref{lem: existence of weight filtration Qp local system fiber}. 
\end{proposition}

\begin{proof}
    Pick some classical point $\tilde{x} \in \Ann_n(K)$ which specializes to $x \in \mathfrak{A}_k \times \Fpbar$.  By \Cref{prop:compatibility of crystals} and the preceding discussion, we have a filtration $W_\bullet\VetZp|_{\tilde x}$ by Galois sub-representations, and it suffices to prove that this filtration is stable by the action of $\pi_1(\Ann_n)$ on ${\VetZp}|_{\tilde{x}}[1/p]$. For any particular $i$, by taking an appropriate wedge power of $\VetZp[1/p]$ and the log-isocrystal $\V_\cris[1/p]$, we may reduce to the case that $W_i\VetZp|_{\tilde x}$ is rank 1. We let $\mathbb{L}$ (resp. $\D$) be the wedge power of $\VetZp[1/p]$ (resp. $\V_\cris[1/p]$), and $\mathbb{E}\subset \D$ the rank 1 sub.  Note that $\mathbb{E}$ is in fact a $F$-isocrystal (not a log one) on $\fT^\LOG_p$.  After twisting, we may in fact assume $\mathbb{E}$ is unit-root. As the special fiber of $\mathfrak{A}'_n$ is a tree consisting of rational curves, we have that $g_p'^*\mathbb{E}$ is constant, and therefore that $g_p^*\mathbb{E}$ is constant.

From the log Fontaine--Laffaille module structure, $g^*\D$ naturally has the structure of a filtered log $F$-isocrystal. Consider the filtered $F$-isocrystal $(g_p^*\mathbb{E},\Fil^\bullet)$ on $\mathfrak{A}_n\times \Fpbar$ in the sense of \cite[B.37]{DLMS2} where $\Fil^\bullet$ is the filtration of the flat bundle $\mathbb{E}(\Ann_n)$ given by $\Fil^1=0$ and $\Fil^0=\mathbb{E}(\Ann_n)$.  The triviality of the $F$-isocrystal $g^*\mathbb{E}$ implies that this filtered $F$-isocrystal is associated to the constant rank 1 local system on $\Ann_k$. Consider the containment in \cite[Proposition 3.45 (c)]{DLMS2}. The sub-isocrystal $\mathbb{E} \subset \D$ induces the inclusion $\mathbb{E}(\Ann_k) \subset \D(\Ann_n)$ of flat vector bundles. In order to realize the rank 1 trivial local system as a sub-local system of $g^*\mathbb{L}$, it suffices to prove that the inclusion $\mathbb{E}(\Ann_n) \subset \D(\Ann_n)$ is strict with respect to the filtrations. This can be checked point-wise, and therefore follows from weak admissibility established in the previous section.
\end{proof}

\subsection{Proof of \Cref{thm:borel boundary}}

By Proposition \ref{prop: tube over bb stratum}, we have that $f$ factors through a single tube $T_{\scrT}$. By Proposition \ref{prop:sublocalsystemonannuli}, we have a filtration $W_{\bullet} f^{*}\VetZp$ of $f^*\VetZp$, whose associated graded is a graded crystalline local system, which we denote by $\gr_\bullet^Wf^*\VetZp$. As in Section \ref{sec: crystalline extension} $\gr_\bullet^Wf^*\VetZp$ extends to a local system on $\sD$. We now use Theorem \ref{prop:existence_of_crystal} and shrink $\sD$ to deduce pointwise constancy of the $F$-crystal $\D_{\cris}(\gr_\bullet^Wf^*\VetZp)=f^*\gr^W_\bullet \bar\V_\cris|_{\fT^\LOG}$, by \Cref{prop:compatibility of crystals}.

The argument now closely follows the case of the interior. Let $\Ann_n \subset \Dstar$ be the sub-annulus with outer radius 1 and inner radius $1/p^n$, and let $\fA_n$ be an integral model as in \cite[Appendix]{p-adic-borel-Ag}. 
It suffices to prove that $\Ann_n$ maps to a single residue disc of $\scrS^{\BB}$. Let $C$ be the special fiber of $\Ann_n$. We have that $C$ maps to $\scrT_{\F_v}$. By the above, $f^*\gr^W_\bullet \bar\V_\cris|_{\fT^\LOG}$ is point-wise constant, and therefore by \Cref{thm: constancy of F-crystal} is constant. The $F$-crystal $\gr^W_\bullet (\D)$ is pulled-back from the boundary stratum $\scrS_1$ of $\scrS^{\BB}$ assocaited to $\scrT$. The scheme $\scrS_1$ is a Shimura variety in its own right and the argument in Proposition \ref{prop: constantFLconstantmap} applies to show that the map $C \rightarrow \scrS_1$ is constant. We have therefore proved that $\Ann_n$ maps to a single residue disc of $\scrS^{\BB}$ and the theorem follows.\qed


\section{Higher-dimensional extension theorem}\label{sec: higher borel}
In this section, we deduce the higher-dimensional extension theorem from the theorem for $\Dstar$.

\subsection{Preliminaries}We first prove some general statements regarding mermorphic extension.  We will work in the following setting. Let $X$ be a quasi-projective algebraic variety over $K$ with a projective compactification $\overline{X}$ over $K$. We fix a closed embedding $\overline{X} \hookrightarrow \PP^n_K$ into projective $n$-space. This corresponds to the data of a very ample line bundle on $\overline{X}$ relative to $K$ that we denote by $\mathscr{L}$, along with $n+1$ generating global sections $s_0,\ldots, s_n \in H^0(\overline{X},\mathscr{L})$. 

\begin{definition}
    We say that the compactification $X \subset \overline{X}$ \emph{satisfies one-dimensional Borel extension over all finite extensions of $K$}, if for any finite field extension $F$ of $K$, every analytic map $g^\times : \Dstar_F \rightarrow (X \times_K F)^\an$ over $F$ admits an analytic extension $g : \sD_F \rightarrow (\overline{X} \times_K F)^\an$.
\end{definition}

\begin{definition}
    Let $W,Y$ be reduced rigid analytic spaces and $Z\subset Y$ a closed subspace.  We say a morphism $f:Y\backslash Z\to W$ \emph{extends meromorphically over $Z$} if the (metric) closure of the graph of $f$ in $Y\times W$ is an analytic subspace.  Equivalently, there is modification $g:Y'\to Y$ and a commutative diagram
    \[\begin{tikzcd}
    &Y'\ar[ld,"g",swap]\ar[rd]&\\
    Y&&W\\
    &Y\backslash Z\ar[lu,"i"]\ar[ru,"f"]&
    \end{tikzcd}\]
    where $i$ is the inclusion.
\end{definition}

The main result of this section is the following, which says that the one-dimensional Borel extension implies that analytic maps from poly-punctured disks $(\Dstar)^a \times \sD^b$ into $X$ extend meromoprhically.
\begin{proposition}\label{extension-to-codim-1}
    Suppose $X \subset \overline{X}$ satisfies one-dimensional Borel extension over all finite extensions of $K$. Then, given any finite field extension $F$ of $K$, and any analytic map $h^\times : (\Dstar_F)^a \times \sD_F^b \rightarrow X_F^\an$, there is a closed analytic subspace $Z \subset \sD_F^{a+b}$ of codimension at least $2$ contained in the complement of $(\Dstar)^a \times \sD^b$ such that $h^\times$ extends to an analytic map $h : \sD_F^{a+b}\setminus Z \rightarrow \overline{X}_F^\an$.  Moreover, $h$ extends meromorphically over $Z$.
\end{proposition}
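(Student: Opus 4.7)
The plan is to prove the proposition in two stages. First, I would extend $h^\times$ analytically across the smooth locus of the boundary divisor $D := \sD_F^{a+b} \setminus ((\Dstar_F)^a \times \sD_F^b)$, obtaining an analytic map on the complement of $Z := D \setminus D^\circ$, a closed analytic subspace of codimension $\geq 2$. Second, I would use the projective embedding $\overline{X} \subset \PP^n_K$ together with rigid-analytic Hartogs-type extension results to show this map extends meromorphically across $Z$.

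For the first stage, I would work locally near a classical point $p$ in the smooth part $D^\circ$ of $D$. After permuting coordinates, we may assume the boundary reads $\{t_1 = 0\}$ in local polydisk coordinates. The one-dimensional Borel extension hypothesis, applied over a finite extension of the residue field of $p$, yields an analytic extension of the restriction of $h^\times$ to the transverse disk through $p$, landing at some point $\overline{p} \in \overline{X}_F^\an$. To upgrade this pointwise extension to an analytic extension in all variables, I would combine this with a quasi-compactness argument: using that $\{|t_1| = \epsilon\} \times \sD_F^{a+b-1}$ is affinoid for small $\epsilon$ and that $\overline{X}^\an$ is proper, one shows $h^\times$ maps a sufficiently small punctured polydisk neighborhood of $p$ into an affinoid subdomain $V \subset \overline{X}_F^\an$. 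Composing with a closed embedding $V \hookrightarrow \mathrm{Spa}(A)$ expresses $h^\times$ locally via analytic functions on a punctured polydisk that are bounded with respect to the affinoid norm, and rigid-analytic Riemann extension for bounded analytic functions on a normal space extends them across $\{t_1 = 0\}$. Assembling these local extensions yields an analytic map $h : \sD_F^{a+b} \setminus Z \to \overline{X}_F^\an$.

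For the meromorphic extension over $Z$, I would use the projective embedding. By the rigid-analytic second Riemann extension theorem, the line bundle $h^*\mathscr{L}$ on $\sD_F^{a+b} \setminus Z$ extends uniquely to a line bundle $\mathscr{M}$ on $\sD_F^{a+b}$, which is trivial since the closed polydisk has vanishing Picard group. Likewise, the pullback sections $h^* s_i$ extend to global sections $\tilde\sigma_i$ of $\mathscr{M}$. After trivializing $\mathscr{M}$, these become analytic functions $f_0, \ldots, f_n \in \mathcal{O}(\sD_F^{a+b})$ whose common zero locus $Z_0$ is contained in $Z$. The rational map $[f_0 : \cdots : f_n] : \sD_F^{a+b} \dashrightarrow (\PP^n_K)^\an$ is regular on $\sD_F^{a+b} \setminus Z_0$ and agrees with $h$ on $\sD_F^{a+b} \setminus Z$; by analytic continuation and closedness of $\overline{X}^\an \subset (\PP^n_K)^\an$, its image lies in $\overline{X}_F^\an$. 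Resolving the indeterminacy locus $Z_0$ via a sequence of blowups produces a modification $g : Y' \to \sD_F^{a+b}$ and an analytic morphism $Y' \to \overline{X}_F^\an$ extending $h$, exhibiting the desired meromorphic extension.

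The main obstacle is the boundedness step in the first stage: verifying that $h^\times$ really maps a sufficiently small punctured polydisk neighborhood of each classical boundary point into a common affinoid chart of $\overline{X}^\an$. This requires carefully combining the fiberwise one-dimensional extensions with a quasi-compactness/continuity argument in rigid analytic geometry, exploiting the properness of $\overline{X}$ to control how the pointwise limits vary in the transverse parameters. Once this localization is achieved, rigid-analytic Riemann extension for bounded functions upgrades the fiberwise extension to an analytic one, and the subsequent Hartogs-type extension and resolution-of-indeterminacy arguments proceed along standard lines.
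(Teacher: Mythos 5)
There is a genuine gap, and it sits exactly at the crux of the proposition. Your Stage 1 asserts that the fiberwise (one-dimensional) extensions can be upgraded, via a ``quasi-compactness/continuity'' argument and properness of $\overline{X}$, to an analytic extension of $h^\times$ across the whole smooth locus of the boundary divisor, with $Z$ equal to the codimension-$2$ corner locus. You flag the boundedness step yourself, but it is not a technical verification one can defer: the hypothesis only controls each transverse slice separately, and nothing in it gives uniformity in the remaining parameters --- the slice-wise limit points can vary without any a priori control, so there is no reason a small punctured polydisk neighborhood of a boundary point maps into a single affinoid chart. Worse, the intended conclusion of Stage 1 is strictly stronger than the proposition and is false in general: even when every one-dimensional slice extends, the several-variable map typically only extends away from the indeterminacy locus of its projective-coordinate description, and that locus can be a codimension-$2$ subset lying \emph{inside} a single boundary component $\{z_j=0\}$ (think of the model behavior $(z,w)\mapsto [w:z]$ near $(0,0)$, where both families of slices extend but the two-variable map does not extend at the point). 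So one cannot take $Z = D\setminus D^\circ$; $Z$ must be allowed to meet $D^\circ$, which is how the proposition is stated and proved in the paper.

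The mechanism you are missing is the one the paper supplies. It trivializes $(h^\times)^*\mathscr{L}^\an$ on the \emph{punctured} polydisk (Lemma \ref{triviality-of-pic}), writes $h^\times$ as $[f_0:\cdots:f_n]$ there, and observes that the one-dimensional extension hypothesis forces each $f_r$ to have only finitely many zeros on every one-dimensional punctured slice. The passage from this fiberwise information to a uniform statement is Lemma \ref{meromorphicity-from-finiteness}: expand each $f_r$ as a Laurent series in $z_i$, use the $p$-adic big Picard theorem to exclude essential singularities slice by slice, then a Baire category argument to get a single bound $m(i)$ on the pole order valid for all slices, and finally the non-archimedean Hartogs theorem to extend $\prod_j z_j^{m(j)}f_r$ over the corners. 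This yields functions $F_r$ on $\sD^{a+b}$ whose common zero locus is the codimension-$2$ set $Z$ (possibly meeting $D^\circ$), gives the extension $h$ on $\sD^{a+b}\setminus Z$, and the closure-of-graph argument inside $\sD^{a+b}\times\PP^{n,\an}$ gives meromorphy over $Z$. Your Stage 2 is close in spirit to the paper's concluding step and its remark on automatic meromorphy outside codimension $2$, but it cannot be reached without replacing Stage 1 by an argument of the above type.
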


We need two preparatory lemmas first. In \autoref{triviality-of-pic}, we show that there are no non-trivial line bundles on $(\Dstar)^a \times \sD^b$, and in \autoref{meromorphicity-from-finiteness}, we prove that an analytic function on $(\Dstar)^a \times \sD^b$, whose restriction to every one-dimensional punctured disk extends meromorphically to the disk $\sD$, must itself extend meromorphically to $\sD^{a+b}$.

\begin{lemma}\label{triviality-of-pic}
Let $\Ann_0 := \Spa(K\langle z^{\pm 1} \rangle, \cO_K\langle z^{\pm 1}\rangle)$ denote the thin annulus over $K$ at radius 1 centered at 0. Let $a, b, c$ be non-negative integers. Then every line bundle on $(\Dstar)^a \times \sD^b \times \Ann_0^c$ is trivial. In particular, every line bundle on $(\Dstar)^a \times \sD^b$ is trivial. 
\end{lemma}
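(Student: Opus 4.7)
Set $Y := (\Dstar)^a \times \sD^b \times \Ann_0^c$ and realize it as the open complement in the affinoid $Y' := \sD^{a+b} \times \Ann_0^c$ of the smooth principal Cartier divisors $D_i := \{t_i = 0\}$ for $i = 1,\dots,a$.  The plan is first to show $\mathrm{Pic}(Y') = 0$ and then to pass to $Y$ via the rigid-analytic localization sequence for Picard groups.

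For the vanishing of $\mathrm{Pic}(Y')$, note that $Y'$ is a smooth affinoid with coordinate ring the Laurent--Tate algebra $A' = K\langle t_1,\dots,t_{a+b},\, z_1^{\pm 1},\dots,z_c^{\pm 1}\rangle$.  By the classical theorem of Bosch--G\"untzer--Remmert the Tate algebra $K\langle t_1,\dots,t_{a+b+c}\rangle$ is a UFD, and inverting the prime elements $z_1,\dots,z_c$ preserves this property.  For a regular affinoid with UFD coordinate ring one has $\mathrm{Pic} = \mathrm{Cl} = 0$.

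I would then invoke the localization exact sequence
\[
\bigoplus_{i=1}^{a}\mathbb{Z}\cdot [D_i] \longrightarrow \mathrm{Pic}(Y') \longrightarrow \mathrm{Pic}(Y) \longrightarrow 0,
\]
which forces $\mathrm{Pic}(Y) = 0$.  The second sentence of the lemma, concerning $(\Dstar)^a \times \sD^b$, is the case $c = 0$.

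The main subtlety is the surjectivity of the restriction map in the above sequence, because $Y \subset Y'$ is not quasi-compact (the factor $\Dstar$ is not).  I would handle this by a Hartogs-style extension: given a line bundle $\mathcal{L}$ on $Y$, its direct image $j_*\mathcal{L}$ under $j:Y\hookrightarrow Y'$ is a reflexive rank-one coherent sheaf on the smooth space $Y'$, hence a line bundle restricting to $\mathcal{L}$.  Coherence is verified locally by exhausting $Y$ by the affinoid polyannuli $Y_n := \Ann_n^a \times \sD^b \times \Ann_0^c$ with $\Ann_n := \{|p|^n \le |t| \le 1\}$ (each having $\mathrm{Pic}(Y_n) = 0$ by the same UFD argument applied to its coordinate ring), trivializing $\mathcal{L}|_{Y_n}$, and extending the resulting sections meromorphically across the $D_i$ by scaling by appropriate powers of the global functions $t_i \in A'$.
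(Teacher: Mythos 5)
The first half of your argument, the vanishing of $\mathrm{Pic}$ of the affinoid $Y'=\sD^{a+b}\times\Ann_0^c$ via factoriality, is sound in substance, though note that $K\langle t_1,\dots,t_{a+b},z_1^{\pm1},\dots,z_c^{\pm1}\rangle$ is \emph{not} a localization of a Tate algebra (it is a completion of one), so ``inverting the prime elements $z_i$'' is not a valid derivation; the UFD statement is nevertheless true and is exactly what the paper cites (\cite[Theorem 3.25]{vanderput-cohomology}). The genuine gap is the surjectivity of $\mathrm{Pic}(Y')\to\mathrm{Pic}(Y)$, i.e.\ extending a line bundle across the divisors $\{t_i=0\}$: this is the entire content of the lemma, and your justification fails. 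The sheaf $j_*\mathcal{L}$ is \emph{not} coherent in general; already $j_*\cO_Y$ is not, since sections of $\cO$ on the punctured polydisc can have essential singularities along $t_i=0$ (on $\Dstar$ these are Laurent series $\sum_{n\in\Z}a_nz^n$ converging on every annulus $|p|^m\le|z|\le1$ but not meromorphic at $0$); if $j_*\cO_Y$ were coherent its sections over the affinoid $Y'$ would be a finite module over $\cO(Y')$, which they are not. The same failure occurs algebraically and complex-analytically for codimension-one complements, where $j_*$ is only quasi-coherent. Your fallback sketch does not repair this: trivializing $\mathcal{L}$ on each affinoid polyannulus $Y_n$ produces trivializations that differ by units $u_n\in\cO^*(Y_n)$, and assembling these into a single trivialization meromorphic along the $D_i$ (equivalently, controlling the $u_n$ uniformly in $n$, a ${\varprojlim}^1$-type problem over the non-quasi-compact quasi-Stein space $Y$) is precisely the difficulty; rescaling by powers of $t_i$ only normalizes the ``winding'' parts of the $u_n$, not the infinite tail of remaining unit factors.

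The paper avoids this limit problem by a different mechanism: it inducts on $a$, restricts the bundle to the thin boundary annulus in one $\Dstar$-factor (trivial by the inductive hypothesis, since that restriction lives on $(\Dstar)^{a-1}\times\sD^b\times\Ann_0^{c+1}$), glues with the trivial bundle over $(\PP^1_K)^\an\setminus\{|z|<1\}$ to obtain a line bundle on $(\Dstar)^{a-1}\times\A^{1,\an}_K\times\sD^b\times\Ann_0^c$, and then invokes homotopy invariance of the Picard group in the $\A^{1,\an}$-direction (\cite[Prop.~3.6]{sigloch-homotopy}, \cite{kerz-saito-tamme}) to descend to $(\Dstar)^{a-1}\times\sD^b\times\Ann_0^c$ and conclude by induction. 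If you wish to keep your divisorial set-up, you would need an input of comparable strength (e.g.\ a theorem asserting that line bundles on such quasi-Stein spaces are defined by divisors, or that they trivialize in the limit over the exhaustion); as written, that step is missing.
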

\begin{proof}
    We induct on $a$ with the base case of $a=0$, being a consequence of the fact that the affinoid algebra $K\langle t_1,\ldots,t_b, z_1^{\pm 1},\ldots, z_c^{\pm 1} \rangle$ is a UFD (see \cite[Theorem 3.25]{vanderput-cohomology}).
    Suppose $a \geq 1$ and $\mathcal{E}$ is a line bundle on $(\Dstar)^a \times \sD^b \times \Ann_0^c$. By the inductive hypothesis, the restriction of $\mathcal{E}$ to $(\Dstar)^{a-1} \times \Ann_0 \times \sD^b \times \Ann_0^c$ is trivial. We may thus glue $\mathcal{E}$ with the trivial line bundle on $(\Dstar)^{a-1} \times (\PP^1_K)^\an\setminus \{|z| < 1\} \times \sD^b \times \Ann_0^c$ to obtain a line bundle $\hat{\mathcal{E}}$ on $(\Dstar)^{a-1} \times \mathbb{A}^{1,\an}_K \times \sD^b \times \Ann_0^c$ that extends $\mathcal{E}$. By \cite[Prop. 3.6]{sigloch-homotopy} (see also \cite[Corollary 5]{kerz-saito-tamme} for the smooth affinoid case), $\hat{\mathcal{E}}$ is the pullback of a line bundle on $(\Dstar)^{a-1} \times \sD^b \times \Ann_0^c$. However, by the inductive hypothesis every line bundle on $(\Dstar)^{a-1} \times \sD^b \times \Ann_0^c$ is trivial and hence so is $\hat{\mathcal{E}}$ and therefore so is $\mathcal{E}.$
\end{proof}

\begin{lemma}\label{meromorphicity-from-finiteness}
    Let $f(\underline{z},\underline{t})$ be an analytic function on $(\Dstar)^a \times \sD^b$, such that for each $1 \leq i \leq a$, and each finite extension $F$ of $K$, and every $F$-valued point $P' = (c_1,\ldots,\widehat{c_i},\ldots,c_a,\underline{\tau}) \in (\Dstar)^{a-1} \times \sD^b$, the specialization $f(c_1,\ldots,z_i,\ldots,c_a,\underline{\tau}) \in \cO(\Dstar_F)$ has only finitely many zeroes on $\Dstar_F.$ Then $f(\underline{z},\underline{t})$ extends to a meromorphic function on $\sD^{a+b}.$
\end{lemma}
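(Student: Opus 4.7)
The overall strategy is to show that $f$ has uniformly bounded pole order along each coordinate hyperplane $\{z_i = 0\}$; once this is established, for $N$ large the product $F := (z_1 \cdots z_a)^N f$ extends analytically to $\sD^{a+b}$, exhibiting $f$ as the meromorphic function $F/(z_1 \cdots z_a)^N$.

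For each fixed $i \in \{1, \ldots, a\}$, write the Laurent expansion
$$f(\underline{z}, \underline{t}) = \sum_{n \in \Z} a_n^{(i)}(\underline{z}^{(i)}, \underline{t}) \, z_i^n$$
with $\underline{z}^{(i)} = (z_1, \ldots, \widehat{z_i}, \ldots, z_a)$ and coefficients $a_n^{(i)} \in \cO((\Dstar)^{a-1} \times \sD^b)$. The task is to produce $N_i$ such that $a_n^{(i)} \equiv 0$ for all $n < -N_i$. The pointwise input is the $p$-adic fact that an analytic function $g \in \cO(\Dstar_F)$ has only finitely many zeros on $\Dstar_F$ if and only if its Laurent expansion at $0$ has only finitely many nonzero negative-order coefficients (a consequence of Newton-polygon analysis for Laurent series on $p$-adic annuli). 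Applying this slice-by-slice under the hypothesis yields, for each classical point $P'$, a bound $N_{i, P'}$ with $a_n^{(i)}(P') = 0$ for all $n < -N_{i, P'}$.

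To upgrade to a uniform bound, restrict to a thick-annulus affinoid subdomain $U := \mathrm{Ann}_r^{a-1} \times \sD^b$ for some $r \in |K^\times|$ with $r < 1$, where $\mathrm{Ann}_r = \{r \leq |z| \leq 1\}$. The set of classical points of $U$, equipped with its sup-norm topology, is compact Hausdorff and totally disconnected, hence Baire. Define the closed analytic subspaces
$$Z_N := \bigcap_{n < -N} V(a_n^{(i)}) \subset U.$$
The pointwise bound gives $U = \bigcup_N Z_N$ as sets of classical points. Since $U$ is an irreducible affinoid, every proper closed analytic subspace is nowhere dense; the Baire category theorem therefore forces $Z_{N_i} = U$ for some $N_i$, i.e.\ $a_n^{(i)}|_U \equiv 0$ for all $n < -N_i$. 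Zariski density of $U$ in $(\Dstar)^{a-1} \times \sD^b$ then gives $a_n^{(i)} \equiv 0$ globally. Setting $N := \max_i N_i$, the function $F := (z_1 \cdots z_a)^N f$ has only non-negative Laurent powers in each $z_i$, hence is bounded near each hyperplane $\{z_i = 0\}$ on every relatively compact affinoid. The $p$-adic Riemann extension theorem applied variable-by-variable (combined with Hartogs-type extension across the codimension-$\geq 2$ intersections) produces an analytic extension $\widetilde{F} \in \cO(\sD^{a+b})$, and $f = \widetilde{F}/(z_1 \cdots z_a)^N$ is meromorphic on $\sD^{a+b}$.

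The main obstacle is the upgrade from pointwise to uniform polar bound: it rests on two facts that must be handled carefully in the rigid setting, namely the Baire property of the classical points of the affinoid, and the nowhere-density of proper closed analytic subspaces inside an irreducible affinoid. One could alternatively run the Baire argument directly on the Berkovich analytification $\mathcal{M}(U)$, which is compact Hausdorff; either way, the remainder of the argument is routine Laurent-series bookkeeping together with the $p$-adic Riemann extension theorem.
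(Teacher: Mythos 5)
Your overall architecture is the same as the paper's: expand $f$ as a Laurent series in each $z_i$, use finiteness of zeros on each one-variable slice to kill all sufficiently negative coefficients pointwise (your Newton-polygon justification is fine, and arguably more self-contained than the paper's appeal to the $p$-adic big Picard theorem), upgrade to a uniform vanishing order by a Baire category argument, and then extend $(z_1\cdots z_a)^N f$ across the coordinate hyperplanes and the codimension-two locus by Riemann/Hartogs extension, exactly as the paper does (citing Bartenwerfer).

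However, your justification of the Baire step --- the crux of the lemma --- has a genuine error. The set of classical points of an affinoid over $K$ is \emph{not} compact in the sup-norm (Berkovich-subspace) topology: it is a dense proper subset of the compact Hausdorff Berkovich spectrum (the Gauss point is not classical), hence not closed and a fortiori not compact; it is not even complete, since classical points can converge to transcendental points of $\C_p$, and e.g.\ the primitive $p^n$-th roots of unity give a sequence of classical points of $\sD$ with no convergent subsequence. So ``compact Hausdorff, hence Baire'' does not stand. Your suggested alternative of running Baire directly on $\mathcal{M}(U)$ also fails as stated: the hypothesis only places the \emph{classical} points inside $\bigcup_N Z_N$, and a countable union of nowhere dense closed subsets of $\mathcal{M}(U)$ can contain every classical point without covering $\mathcal{M}(U)$, so Baire on the Berkovich space yields no contradiction. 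The fix (and, implicitly, what the paper's one-line appeal to Baire amounts to) is to apply Baire to the $F$-points of $U$ for a single fixed finite extension $F$ of $K$, e.g.\ $F=K$: since $K$ is a $p$-adic local field, this set is locally compact (a closed/open subset of $\cO_F^{a+b-1}$), each $Z_N$ meets it in a closed subset, so some $Z_N$ contains all $F$-points of a small polydisc; these are Zariski dense there, so the coefficients $a_n^{(i)}$ with $n<-N$ vanish on that polydisc, and hence on all of $(\Dstar)^{a-1}\times\sD^b$ by the identity principle on a connected smooth rigid space. With that replacement the remainder of your argument goes through and coincides with the paper's proof.
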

\begin{proof}
    Fix $1 \leq i \leq a$. To simplify notations, we set $z' = (z_1,\ldots,\widehat{z_i},\ldots,z_n)$. The function $f$ admits a power series development \[ f(\underline{z},\underline{t}) = \sum_{m \in \Z} a_m(z',\underline{t}) z_i^{m},\] 
    for analytic functions $a_m(z',\underline{t}) \in \cO((\Dstar)^{a-1}\times \sD^b).$ By the $p$-adic big Picard theorem, since for each specialization $P'$, the function $f(z_i,P')$ has only finitely many zeros in $\Dstar_F$, we have that for each such $P'$, the function $f(z_i,P')$ cannot have an essential singularity at $z_i=0$. That is to say that $a_m(P') = 0$ for all $m$ sufficiently small. In other words, there is an $m_0 \in \Z$ such that $P' \in V(\{a_l(z',\underline{t}) : l \leq m_0\}).$
    This being true for every classical point $P'$, implies that $(\Dstar)^{a-1}\times \sD^b = \cup_{m \in \Z} V(\{a_l(z',\underline{t}) : l \leq m\})$. By the Baire category theorem, this implies that there is an $m(i) \in \Z$ such that for $l < m(i)$, $a_l(z',\underline{t}) = 0$. In particular, $z_i^{-m(i)} f(\underline{z},\underline{t})$, extends analytically across the locus $z_i = 0$ as well. The function $\prod_{1\leq j \leq a} z_j^{-m(j)}f(\underline{z},\underline{t})$ thus extends analytically to the complement inside $\sD^{a+b}$ of the codimension 2 subvariety $\cup_{1 \leq r < s \leq a} V(z_r,z_s).$ Hence by the non-archimedean Hartog extension theorem \cite{bartenwerfer-hartog}, defines an analytic function on $\sD^{a+b}.$ This completes the proof.
\end{proof}

\begin{proof}[Proof of \Cref{extension-to-codim-1}] 
    We may assume that $F = K$. Let $h^\times : (\Dstar)^a \times \sD^b \rightarrow X^\an$ be an analytic map. By \autoref{triviality-of-pic}, we may pick a trivialization $\phi : (h^\times)^*(\mathscr{L}^\an) \xrightarrow{\sim} \cO_{(\Dstar)^a \times \sD^b}$ of the pullback line bundle $(h^\times)^*(\mathscr{L}^\an)$.    
    We let $\underline{z} := (z_1,\ldots,z_a)$ denote the coordinates on $(\Dstar)^a$ and $\underline{t} := (t_1,\ldots,t_b)$ the coordinates on $\sD^b.$
    Let $f_r(\underline{z}, \underline{t}) := \phi(s_r) \in \cO((\Dstar)^a \times \sD^b)$. 
    On classical points $(\underline{z},\underline{t}) \in (\Dstar)^a \times \sD^b$, the map $h^\times : (\Dstar)^a \times \sD^b \rightarrow X^\an \hookrightarrow \PP^{n,\an}_K$ is given in projective coordinates by $[f_0(\underline{z},\underline{t}) : \ldots : f_n(\underline{z},\underline{t})].$ For each $i \in \{1,\ldots,a\}$, and any finite extension $F$ of $K$ and an $F$-valued point $P' = (c_1,\ldots,\widehat{c_i},\ldots, c_a, \underline{\tau}) \in (\Dstar)^{a-1}\times \sD^b$, the analytic map $h^\times_{P'} : \Dstar_F \rightarrow \PP^{n,\an}_F$ given by $z_i\mapsto [f_0(c_1,\ldots,z_i,\ldots, c_a,\underline{\tau}): \ldots : f_n(c_1,\ldots,z_i,\ldots, c_a,\underline{\tau})]$,  by hypothesis extends analytically to a map $h_{P'} : \sD_F \rightarrow \PP^{n,\an}_F.$ In particular, there exist analytic functions $g_r(z_i) \in F\langle z_i \rangle$ (depending on $P'$) with no common zeroes on $\sD_F$ such that for all classical points $z_i \in \Dstar_F$,  $[f_0(c_1,\ldots,z_i,\ldots, c_a,\underline{\tau}):\ldots: f_n(c_1,\ldots,z_i,\ldots, c_a,\underline{\tau})] = [g_0(z_i):\ldots : g_n(z_i)]$. This implies that for all $r, s$, $f_r(c_1,\ldots,z_i,\ldots, c_a,\underline{\tau}) g_s(z_i) = f_s(c_1,\ldots,z_i,\ldots, c_a,\underline{\tau})g_r(z_i)$ as analytic functions on $\Dstar_F$ (with coordinate $z_i$). Since the $\{f_r(c_1,\ldots,z_i,\ldots, c_a,\underline{\tau})\}_{0 \leq r \leq n}$ and the $\{g_r(z_i)\}_{0\leq r \leq n}$ have no common zeroes in $\Dstar_F$, this in particular implies that for each $0 \leq r \leq n$, the zero set $V(f_r(c_1,\ldots,z_i,\ldots, c_a,\underline{\tau}))$ equals $V(g_r(z_i))$ in $\Dstar_F$ and is thus \emph{finite}. We conclude from \autoref{meromorphicity-from-finiteness} that there are non-negative integers $\{m(j) : 1\leq j \leq a\}$, such that for each $0 \leq r \leq n$, $F_r(\underline{z},\underline{t}) := \prod_{1\leq j \leq a} z_j^{m(j)} f_r(\underline{z},\underline{t})$ extends analytically to $\sD^{a+b}$ and such that for each $1 \leq j \leq a$, there is an $r$ such that $z_j \nmid F_r(\underline{z},\underline{t})$ in $\cO(\sD^{a+b})$. The set of common zeroes $Z := V(\{F_r(\underline{z},\underline{t}) : 0 \leq r \leq n\})$ has codimension at least 2 in $\sD^{a+b}$, and $h^\times$ extends analytically to the map $h : \sD^{a+b}\setminus Z \rightarrow \PP_K^{n,\an},$ given by $(\underline{z},\underline{t}) \mapsto [F_0(\underline{z},\underline{t}):\ldots : F_n(\underline{z},
    \underline{t})].$

    For the final claim, let $V\subset \sD^{a+b}\times \PP^{n,\an}_K$ be the subspace cut out by $x_iF_j-x_jF_i$, where the $x_i$ are homogeneous coordinates on $\PP^n_K$.  This subspace is equal to the graph $\Gamma$ of $h$ when intersected with $U:=(\sD^{a+b}\backslash Z)\times \PP^{n,\an}_K$.  There is therefore an irreducible component $V_0$ of $V$ for which $\Gamma=U\cap V_0$ (\cite[Cor. 2.2.9]{conrad-irreducible-comp}), and since $U\cap V_0$ is metrically dense in $V_0$, it follows that $V_0$ is the closure of $\Gamma$.
\end{proof}

\begin{remark}
    We remark that the meromorphicity in \Cref{extension-to-codim-1} is automatic for an analytic morphism defined outside codimension 2, as in the complex analytic case: 

\begin{lemma}
    Let $Y$ be a smooth connected rigid analytic space and $Z\subset Y$ a closed subspace of codimension $\geq 2$.  Then any morphism $f:Y\backslash Z\to X^\an$ extends meromorphically over $Z$. 
\end{lemma}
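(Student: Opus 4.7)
The plan is to extend $f$ across $Z$ by extending the data defining $f$ via the projective embedding $\overline{X}\hookrightarrow\mathbb{P}^n_K$, and then resolving the resulting base locus to produce a modification. Composing $f$ with $\overline{X}^{\an}\hookrightarrow(\mathbb{P}^n_K)^{\an}$ gives an analytic morphism $\bar f\colon Y\setminus Z \to (\mathbb{P}^n_K)^{\an}$; pulling back the very ample line bundle $\mathscr{L}$ and its generating sections $s_0,\ldots,s_n$ produces a line bundle $\mathscr{M} := \bar f^{*}\mathscr{L}^{\an}$ on $Y\setminus Z$ together with nowhere-simultaneously-vanishing global sections $\sigma_i := \bar f^{*} s_i^{\an}$.

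The key step is to extend $\mathscr{M}$ to a line bundle $\widetilde{\mathscr{M}} := j_{*}\mathscr{M}$ on $Y$ (with $j\colon Y\setminus Z\hookrightarrow Y$), along with extending the sections $\sigma_i$ to global sections $\tilde\sigma_i\in\Gamma(Y,\widetilde{\mathscr{M}})$. This is a rigid-analytic Hartogs-type extension: because $Y$ is smooth and $Z$ has codimension $\geq 2$, $\mathscr{M}$ is a rank-one reflexive sheaf whose pushforward along $j$ remains coherent and reflexive, hence a line bundle; the extension of the sections then follows, after local trivialisation of $\widetilde{\mathscr{M}}$, from Bartenwerfer's non-archimedean Hartogs theorem. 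The $\tilde\sigma_i$ then define a rational map $Y\dashrightarrow(\mathbb{P}^n_K)^{\an}$ whose base locus $B = V(\tilde\sigma_0,\ldots,\tilde\sigma_n)$ is contained in $Z$ and has codimension $\geq 2$. Resolving $B$ by admissible blowups of the ideal sheaf $(\tilde\sigma_0,\ldots,\tilde\sigma_n)\subset\widetilde{\mathscr{M}}$ yields a modification $g\colon Y'\to Y$, an isomorphism over $Y\setminus Z$, on which the pulled-back sections generate a line bundle and thereby define an analytic morphism $h\colon Y'\to(\mathbb{P}^n_K)^{\an}$ extending $\bar f$. Since $h$ agrees with $\bar f$ on the dense open $g^{-1}(Y\setminus Z)$ which maps into the closed subspace $\overline{X}^{\an}\subset(\mathbb{P}^n_K)^{\an}$, the morphism $h$ factors through $\overline{X}^{\an}$, providing the desired modification exhibiting the meromorphic extension.

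The principal obstacle is justifying the rigid-analytic Hartogs-type extension of $\mathscr{M}$: while morally straightforward, in rigid geometry one must argue carefully that $j_{*}\mathscr{M}$ is coherent and locally free, which is most easily done locally by trivialising $\mathscr{M}$ on an affinoid cover of $Y\setminus Z$ and using Bartenwerfer's theorem to extend transition functions (and verify invertibility of the extensions). An alternative that sidesteps extending the line bundle globally is to work directly with the closure of the graph of $\bar f$ in $Y\times(\mathbb{P}^n_K)^{\an}$ and to use Bartenwerfer together with the argument of the preceding proposition to show that this closure is cut out locally by analytic equations of the form $x_i\tilde\sigma_j - x_j\tilde\sigma_i = 0$, yielding the required analytic subspace.
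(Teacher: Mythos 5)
Your proposal is correct and follows essentially the same route as the paper: compose with the projective embedding, extend the pulled-back line bundle across the codimension-$\geq 2$ locus (the paper does this via Weil divisors and Remmert--Stein, you via reflexive pushforward, both Hartogs-type statements needing the same care), extend the defining sections by Bartenwerfer's non-archimedean Hartogs theorem, and conclude meromorphicity. Your main construction (blowing up the base ideal to get the modification) and your stated alternative (the closure of the graph cut out by $x_i\tilde\sigma_j-x_j\tilde\sigma_i$, as in the preceding proposition) are both fine; the paper itself uses the latter, working locally on $Y$ after trivializing the extended bundle.
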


\begin{proof}
    We may assume $X=\PP^n_K$. Since $Z$ has codimension at least 2 in $Y$, the pull-back $f^\ast(\cO(1))$ extends to a line bundle on $Y$ (using the correspondence between line bundles and Weil divisors and applying Remmert-Stein which allows us to extend Weil divisors outside the codimension $\geq$ 2 analytic subvariety $Z$.) 
    The question of whether $f$ extends meromorphically across $Z$ to all of $Y$ is local on $Y$. We may therefore assume that the pull-back $f^\ast \cO(1)$ is trivial. Then the morphism $f : Y\setminus Z \rightarrow \PP^{n,\an}_K$ is described in homogenous coordinates by $n+1$ analytic functions $y \mapsto [f_0(y):\ldots : f_n(y)]$ where $f_i(y) \in \cO(Y\setminus Z)$ do not have any common zeroes in $Y\setminus Z$. By the non-archimedean Hartog extension theorem, $f_i(y)$ uniquely extend to analytic functions $F_i \in \cO(Y)$. The subspace in $Y \times \PP^{n,\an}_K$ cut out by $x_iF_j - x_jF_i$ contains as an irreducible component the closure inside $Y \times \PP^{n,\an}_K$ of the graph of $f : Y \setminus Z \rightarrow \PP^{n,\an}_K$.   
\end{proof}
\end{remark}

\subsection{Higher-codimension extension for Shimura varieties and period images}
\def\Griff{\operatorname{Griff}}
\def\cU{\mathcal{U}}
\def\Proj{\operatorname{Proj}}
\def\Can{{\C\mbox{-}\an}}
Let $X$ denote either $\ShimK$ or a geometric period image $Y$ as described in $\S1$.  For the reader's convenience we summarize here the structures that will be relevant for the proof of \Cref{thm:introextension}.
\begin{itemize}
\item A $\Z_p$-local system $\VetZp$ on $X$.
\item A filtered vector bundle $V_{\mathrm{dR}}:=(\mathcal{V},F^\bullet )$ on $X$.
\item A normal compactification $X^\BB$ of $X$ for which the $k$th (for some $k$) power of the Griffiths bundle
\[\Griff(V_{\mathrm{dR}}):=\bigotimes_p\det F^p\]
extends to an ample line bundle $L$.  This is \cite{bailyborel} in the Shimura variety case and \cite[Thm 1.2]{bfmt-bailyborel} in the period image case.
\item A log smooth proper $(X',D_{X'})$ with a modification $\pi_{X'\backslash D_{X'}}:X'\backslash D_{X'}\to X$, such that the pullback $U_{\mathrm{dR}}:=(\mathcal{U},\nabla,F^\bullet)$ of $(\mathcal{V},F^\bullet )$ admits a flat connection with respect to which the filtration is Griffiths transverse and extends to a logarithmic flat vector bundle $\bar U_{\mathrm{dR}}:=(\bar{ \mathcal{U}},\nabla,F^\bullet\bar{\mathcal{U}})$ such that the eigenvalues of the residues are contained in $[0,1)\cap \Q$.  Moreover, $\pi_{X'\backslash D_{X'}}:X'\backslash D_{X'}\to X$ extends to a morphism $\pi:(X',D_{X'})\to (X^\BB,X^\BB\backslash X)$ for which
\[\pi^*L\cong \Griff(\bar U_\mathrm{dR})^k.\]

Here, $X'$ is taken to be the largest stratified resolution $S^m$ in the period image case, and $S$ itself in the Shimura variety case.  Again, this follows from \cite{bailyborel} in the Shimura variety case and \cite[Thm 1.2]{bfmt-bailyborel} in the period image case. 
\item We have 
\begin{equation}\label{pullback isom}\bar U_{\mathrm{dR}}^{\an}\cong \Ddrlog(\pi_{X'\backslash D_{X'}}^*\VetZp) \end{equation}
via the $p$-adic Riemann--Hilbert correspondence of \cite[Thm 1.7]{DLLZ}.  In the period image case the isomorphism is given over $P'$ by the second part of \cite[Thm 1.1]{DLLZ}, and in the Shimura variety case by \cite[Thm 1.5]{DLLZ}.
\end{itemize}
\begin{proposition}\label{extension codim 2}  Let $(M,D_M)$ be a log smooth rigid-analytic variety and $f:M\backslash D_M\to X^\an$ a morphism such that $M\backslash D_M\to X^{\BB,\an}$ extends meromorphically over $D_M$.  Then it extends regularly over $D_M$.
\end{proposition}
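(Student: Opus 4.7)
The plan is to resolve the meromorphic extension to a morphism on a log smooth model, then show the exceptional divisors are contracted, using the ampleness of $L$ on $X^{\BB}$ together with the $p$-adic Riemann--Hilbert correspondence to control the pullback of $L$ in terms of a canonical log extension on $M$.

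First I would take a modification $g\colon M'\to M$ and a regular morphism $\tilde f\colon M'\to X^{\BB,\an}$ extending the meromorphic map, with $(M',D_{M'})$ log smooth and $D_{M'}\supseteq g^{-1}(D_M)\cup\operatorname{Exc}(g)$. Since modifications have connected fibers, $\tilde f$ descends to a regular morphism $\hat f\colon M\to X^{\BB,\an}$ provided it contracts every irreducible exceptional curve $C\subset M'$ to a point, which by ampleness of $L$ on $X^{\BB}$ is equivalent to $\deg(\tilde f^*L|_C)=0$ for every such $C$. To access the Griffiths description $\pi^*L\cong\Griff(\bar U_{\mathrm{dR}})^k$, I would further take a modification $g''\colon M''\to M'$ resolving the fiber product $M'\times_{X^\BB}X'$, yielding $\tilde f'\colon M''\to X'$ with $\pi\circ\tilde f'=\tilde f\circ g''$ and a log smooth boundary $D_{M''}\supseteq(g\circ g'')^{-1}(D_M)$; note that since $f$ maps $M\setminus D_M$ into $X^{\an}$, one automatically has $\tilde f'^{-1}(D_{X'})\subseteq(g\circ g'')^{-1}(D_M)$. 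Pulling back the Griffiths identity gives
\[ g''^*\tilde f^*L\;\cong\;\Griff(\tilde f'^*\bar U_{\mathrm{dR}})^k \quad\text{on }M''. \]

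By \eqref{pullback isom} and the functoriality of $\Ddrlog$, the log flat bundle $\tilde f'^*\bar U_{\mathrm{dR}}$ is the canonical log extension (with residues in $[0,1)\cap\Q$) of the pulled-back local system $(\tilde f\circ g'')^*\VetZp$ on $(M'',D_{M''})$. Applying the $p$-adic Riemann--Hilbert correspondence directly to $(M,D_M)$ produces the canonical log extension $\mathcal W:=\Ddrlog(f^*\VetZp)$ on $(M,D_M)$, with Griffiths line bundle $\Griff(\mathcal W)$ on $M$. The two log flat bundles $\tilde f'^*\bar U_{\mathrm{dR}}$ and $(g\circ g'')^*\mathcal W$ on $M''$ agree on $M''\setminus D_{M''}$, so their Griffiths line bundles differ at most by a twist supported on $D_{M''}$.

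The main obstacle is to show that this twist is pulled back from $M$ via $g\circ g''$, i.e., that no exceptional component of $g\circ g''$ occurs in it. Granting this, $g''^*\tilde f^*L$ is pulled back from $M$, so it restricts trivially to any curve contracted by $g\circ g''$, and pushing down via $g''$ gives $\deg(\tilde f^*L|_C)=0$ for every exceptional $C\subset M'$, completing the argument. The verification is a local residue calculation at generic points of exceptional components of $g\circ g''$: $\mathcal W$ has residues in $[0,1)\cap\Q$ along components of $D_M$, and pullback by $g\circ g''$ rescales these by ramification indices along exceptional divisors; combined with the normalization that $\tilde f'^*\bar U_{\mathrm{dR}}$ also has residues in $[0,1)\cap\Q$, one concludes that the twist is supported only on strict transforms of components of $D_M$. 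If needed, a preliminary finite cover of $M$ arranging unipotent local monodromy of $f^*\VetZp$ around $D_M$ makes the residues of $\mathcal W$ nilpotent, so that the pulled-back residues remain nilpotent and hence automatically in $[0,1)\cap\Q$, eliminating any twist along exceptional components outright.
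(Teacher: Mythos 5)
Your overall strategy is the same as the paper's: resolve the meromorphic extension, use $\pi^*L\cong \operatorname{Griff}(\bar U_{\mathrm{dR}})^k$ together with the $p$-adic Riemann--Hilbert functor $\Ddrlog$ to show that the pullback of the ample bundle $L$ has degree zero on the curves contracted by the modification, and then descend by a rigidity argument. The difference is in the key identification. The paper asserts, via compatibility of $\Ddrlog$ on $(M,D_M)$ and on $(X',D_{X'})$ after pulling back to the common resolution, that $g^*\Ddrlog(f^*\VetZp)\cong h^*\Ddrlog(\pi_{X'\backslash D_{X'}}^*\VetZp)$, so the pulled-back ample bundle is literally a pullback from $M$ and the rigidity lemma applies. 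You instead try to establish a weaker statement (discrepancy divisor without exceptional components) by a residue computation, and that computation has genuine gaps.

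Concretely: (i) the claim that $\tilde f'^*\bar U_{\mathrm{dR}}$ is ``the canonical log extension with residues in $[0,1)\cap\Q$'' of the pulled-back local system is false whenever $\tilde f'$ meets $D_{X'}$ with multiplicity: residue eigenvalues get multiplied by those multiplicities and leave $[0,1)$ (already for a rank-one local system of finite order pulled back along $z\mapsto z^m$, the pullback of the $[0,1)$-normalized extension is not $[0,1)$-normalized). Your fallback cover of $(M,D_M)$ only normalizes the residues of $\mathcal W=\Ddrlog(f^*\VetZp)$; it does nothing to the residues of $\bar U_{\mathrm{dR}}$ along $D_{X'}$, which are intrinsic to $X$, generically nonzero (the boundary monodromy is quasi-unipotent, not unipotent), and after pullback can become positive integers along exceptional divisors and strict transforms alike --- so the two lattices can genuinely differ there, and ``eliminating any twist along exceptional components outright'' is not established. (ii) Even granting that the discrepancy divisor contains no exceptional component, the inference ``hence $g''^*\tilde f^*L$ is pulled back from $M$'' is a non sequitur: a twist by strict transforms of components of $D_M$ is not a pullback from $M$ (the pullback is the total transform), and strict transforms meet the contracted curves, so such a twist need not restrict to degree zero on them. (iii) The reduction to the finite ramified cover is itself unaddressed (you would still need to descend the extended map back to $M$). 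What your argument would actually need is that $\operatorname{Griff}(\tilde f'^*\bar U_{\mathrm{dR}})^k$ agrees with the pullback from $M$ of $\operatorname{Griff}(\Ddrlog(f^*\VetZp))^k$ on the nose, or at least up to a divisor of degree zero on every contracted curve; the residue bookkeeping as proposed does not deliver this, whereas the paper's proof rests precisely on invoking this identification through the functoriality of the $\Ddrlog$ correspondence.
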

\begin{proof}By meromorphicity and embedded resolution of singularities \cite[Thms 1.1.9, 1.1.13]{temkinembedded}, there is a log smooth $(M',D_{M'})$, a modification $g:(M',D_{M'})\to (M,D_M)$, and a diagram
\[
\begin{tikzcd}
    &(M',D_{M'})\ar[dd,"g"]\ar[rr,"h"]&&(X',D_{X'})\ar[dd,"\pi"]\\
    M'\backslash D_{M'}\ar[dd,"g_{M'\backslash D_{M'}}"]\ar[ru]&&\\
    &(M,D_M)&&(X^\BB,X^\BB\backslash X)\\
    M\backslash D_M\ar[ru]\ar[rr,"f"]&&X\ar[ru]&
\end{tikzcd}
\]
Since $(\pi h)_{M'\backslash D_{M'}}^*\V\cong (fg_{M'\backslash D_{M'}})^*\VetZp$, we have 
\[g^*\Ddrlog(f^*\VetZp)\cong h^*\Ddrlog(\pi_{X'\backslash D_{X'}}^*\VetZp) \]
and likewise for the Griffiths bundles.  Since $\Griff(\Ddrlog(\pi_{S'\backslash D_{S'}}^*\VetZp))^k$ descends amply to $X^\BB$, it follows that $\pi h$ factors through $g$, by the rigidity lemma (see e.g. \cite[Lemma 1.15]{debarre}).
\end{proof}

\subsection{Proof of \Cref{thm:introextension}}

    Combine \Cref{thm:borel boundary}, \Cref{extension-to-codim-1}, and \Cref{extension codim 2} in the case of Shimura varieties, and \Cref{disk period}, \Cref{extension-to-codim-1}, and \Cref{extension codim 2} for period images.\qed
\begin{remark}
    The main obstruction to using our strategy in the bad reduction case for period images is the lack of a ``retraction onto the boundary'' in the sense of \Cref{retraction 1} and \Cref{retraction 2}.
\end{remark}

\bibliographystyle{alpha}
\bibliography{main}
\end{document}